\newtheorem{Lemma}{Lemma}[section]
\newtheorem{Theorem}[Lemma]{Theorem}
\newtheorem{Proposition}[Lemma]{Proposition}
\newtheorem{Corollary}[Lemma]{Corollary}
\newtheorem{Remark}[Lemma]{Remark}
\newtheorem{Definition}[Lemma]{Definition}
\def\cD{\mathcal D}
\def\bfG{\mathbf G}
\def\R{\mathbb R}
\def\C{\mathbb C}
\def\K{\mathbb K}
\def\Z{\mathbb Z}
\newcommand{\de}[2]{\frac{\partial #1}{\partial #2}}
\newcommand{\p}{\partial}
\newcommand{\<}{\langle}
\renewcommand{\>}{\rangle}
\renewcommand{\(}{\left(}
\renewcommand{\)}{\right)}
\renewcommand{\div}{\operatorname{div}}
\begin{document}
\title[Tensor tomography for Gaussian thermostats]{Invariant distributions and tensor tomography for Gaussian thermostats}

\author[Yernat M. Assylbekov]{Yernat M. Assylbekov}
\address{Department of Mathematics, University of Washington, Seattle, WA 98195-4350, USA}
\email{y\_assylbekov@yahoo.com}

\author[Hanming Zhou]{Hanming Zhou}
\address{Department of Mathematics, University of Washington, Seattle, WA 98195-4350, USA\\
Current address: Department of Pure Mathematics and Mathematical Statistics, University of Cambridge, Cambridge CB3 0WB, UK}
\email{hz318@dpmms.cam.ac.uk}


\begin{abstract}
In this paper we consider the Gaussian thermostat ray transforms on both closed Riemannian surfaces and compact Riemannian surfaces with boundary. We establish certain results on the injectivity of the thermostat ray transform and the surjectivity of its adjoint.
\end{abstract}

\maketitle


\section{Introduction and statement of results}
\subsection{Gaussian Thermostats}
Let $(M,g)$ be a compact oriented Riemannian manifold (with or without boundary) and $E$ be a smooth vector field on $M$ (called the {\it external field}). A parameterized curve $\gamma(t)$ on $M$ satisfying the equation
\begin{equation}\label{Gaussian thermostat}
D_t\dot \gamma=E(\gamma)-\frac{\<E(\gamma),\dot\gamma\>}{|\dot\gamma|^2}\dot\gamma.
\end{equation}
is called a {\it thermostat geodesic}. Here and in what follows $D_t$ denotes the covariant derivative along $\gamma$. This differential equation defines a flow $\phi_t=(\gamma(t),\dot{\gamma}(t))$ on $SM$ (the unit sphere bundle of $M$) which is called a {\it Gaussian thermostat} (or {\it isokinetic dynamics}, see \cite{Ho}). The flow $\phi$ reduces to the geodesic flow when $E=0$. As in the case of geodesic flows, Gaussian thermostats are reversible in the sense that the flip $(x, v)\mapsto(x,-v)$ conjugates $\phi_t$ with $\phi_{-t}$. We denote the Gaussian thermostat by $(M,g,E)$ and the generating vector of the thermostat flow by $\bfG_E$, which is a vector field on $SM$.

In this paper we will consider the case when $M$ is a surface (i.e. 2-dimensional manifolds). Then for $(x,v)\in SM$ we can write
$$
E(x)=\<E(x),v\>v+\<E(x),iv\>iv,
$$
where $i$ indicates the rotation by $\pi/2$ according to the orientation of $M$. Thus on surfaces, the equation \eqref{Gaussian thermostat} can be rewritten as
\begin{equation}\label{thermo}
D_t\dot\gamma=\lambda(\gamma,\dot\gamma)i\dot\gamma,
\end{equation}
where
\begin{equation}\label{lambda}
\lambda(x,v):=\<E(x),iv\>.
\end{equation}
Notice that for Gaussian thermostats, $\lambda$ corresponds to a 1-form on $M$. If $\lambda$ is a smooth function on $M$, \eqref{thermo} defines the magnetic flow on surfaces associated with the magnetic field $\Omega=\lambda\,d{\rm Vol}_g$, where $d{\rm Vol}_g$ is the area form of $M$. One can consider a general function $\lambda\in C^{\infty}(SM)$, we call the induced flow a {\it generalized thermostat}. 

In dynamical systems, Gaussian thermostats provide interesting models in non-equilibrium statistical mechanics \cite{G,GR,R}. Gaussian thermostats also arise in geometry as the flows of metric connections with non-zero torsion; see \cite{W}.



\subsection{Thermostat ray transforms}
Given a Gaussian thermostat $(M,g,E)$, we define the {\it thermostat ray transform} of a smooth function $\varphi$ on $SM$ to be
$$
I\varphi(\gamma):=\int_0^T \varphi(\gamma(t),\dot\gamma(t))\,dt.
$$
When $M$ is closed, $\gamma$ is a closed thermostat geodesic with period $T$. A basic question of integral geometry is whether the ray transform is injective. Of course, this question makes sense only in the case when the flow has sufficiently many closed orbits. Anosov flows constitute wide class of flows with sufficiently many closed orbits. Recall that a Gaussian thermostat $(M,g,E)$ is said to be {\it Anosov} if there is a continuous invariant splitting $T(SM)=\mathbb{R} \bfG_E\oplus E^{u}\oplus E^{s}$ in such a way that there are constants $C>0$ and $0<\rho<1<\eta$ such that 
for all $t>0$ we have
\[\|d\phi_{-t}|_{E^{u}}\|\leq C\,\eta^{-t}\;\;\;\;\mbox{\rm
and}\;\;\;\|d\phi_{t}|_{E^{s}}\|\leq C\,\rho^{t},\]
where norms are taken with respect to the Sasaki type Riemannian metric on $SM$.

There is a natural obstruction to the injectivity of the ray transform, i.e. the functions of the type $\varphi=\bfG_E u$ with $u\in C^\infty(SM)$. However, in applications one often needs to invert the ray transform of functions on $SM$ arising from symmetric tensor fields. Therefore, we consider this particular case which is known as the tensor tomography problem.

Let $\varphi=\varphi_{i_1\dots i_m}\,dx^{i_1}\otimes \dots\otimes dx^{i_m}$ be a smooth symmetric $m$-tensor field on $M$. $\varphi$ induces a smooth function $\hat\varphi\in C^\infty(SM)$ defined by
$$
\hat\varphi(x,v):=\varphi_{i_1\dots i_m}(x)\, v^{i_1} \cdots v^{i_m},\quad (x,v)\in SM.
$$
In what follows we will drop the hat, and we hope that it will be clear from the context when we mean the function on $SM$ induced by the tensor. By $C^\infty(S_m(M))$ we denote the bundle of smooth symmetric $m$-tensor fields on $M$.

We say that $I_m$ ($I$ acting on $m$-tensors) is {\it $s$-injective} if $I_m\varphi\equiv 0$ implies that $\varphi=\bfG_E h$ for some $h\in C^\infty(S_{m-1}(M))$. The tensor tomography problem asks under what conditions $I_m$ is $s$-injective. The tensor tomography problem on Anosov surfaces was studied in \cite{DS, SU, PSU, GK, CS, Gu}, and \cite{DP2, Ain} for magnetic Anosov surfaces. In this paper, we will focus on the tensor tomography problem for Gaussian thermostats. In \cite{DP} Dairbekov and Paternain proved the $s$-injectivity of $I_m$ for $m=0,1$, but considering more general Anosov thermostats. In \cite{AD} Assylbekov and Dairbekov extended this result to the case when the Riemannian metric is replaced by a Finsler metric. They showed that for $m=0$ injectivity results hold even when the flow is not Anosov, but simply has no conjugate points. When $m=2$, Jane and Paternain \cite{JP} proved $s$-injectivity under the assumption that the external field is divergence free and the surface has negative Gaussian curvature.

Similarly there is a tensor tomography problem for Gaussian thermostats on compact Riemannian surfaces with boundary. In this case, the ray transform is along thermostat geodesics joining boundary points. For the boundary case, the tensor tomography problem for geodesic flows has been extensively studied, see e.g. \cite{Mu, AR, PU2, Sh2, SU1, SU2, PSU2, MSU, PZ} and the references therein. The case of magnetic flows was considered in \cite{DPSU, Ain2}. We will study the boundary case in the last section of the paper.

\subsection{Injectivity results for $I_m$}
For the case of Gaussian thermostats we obtain several injectivity results of the thermostat ray transform under various assumptions. In order to state these results we need to introduce some notations.

Since $M$ is assumed to be oriented there is a circle action on the fibres of $SM$ with infinitesimal generator $V$ called the vertical vector field. Let $X$ denote the generator of the geodesic flow of $g$. We complete $X,V$ to a global frame of $T(SM)$ by defining the vector field $X_\perp:=[V,X]$, where $[\cdot,\cdot]$ is the Lie bracket for vector fields. In this global frame, the generating vector field $\bfG_E$ for a Gaussian thermostat $(M,g,E)$ equals $X+\lambda V$. 

Define the {\it thermostat curvature} to be the quantity $\mathbb{K}:=K-\div_g E$, where $K$ is the Gaussian curvature of the surface $(M,g)$. The quantity $\mathbb{K}$ can also be written as $K+X_\perp\lambda+\lambda^2+\bfG_E V\lambda$. Notice that $\mathbb{K}$ is a smooth function on $M$. Following \cite{PSU}, we introduce a definition involving a modified thermostat Jacobi equation.
\begin{Definition}{\rm
Let $(M,g,E)$ be a Gaussian thermostat on a closed oriented Riemannian surface. We say that $(M,g,E)$ {\it has no $\beta$-conjugate points} if for any thermostat geodesic $\gamma$, all non-trivial solutions to the $\beta$-Jacobi equation along $\gamma$
\begin{equation}\label{b-Jeq}
\ddot y-V(\lambda)\dot y+(\beta \mathbb{K}-\bfG_E V(\lambda))y=0
\end{equation}
vanish at most once. The {\it terminator value} of $(M,g,E)$ is defined to be
$$
\beta_{\rm ter}=\sup\{\beta\in [0,\infty]:(M,g,E)\text{ has no $\beta$-conjugate points}\}.
$$}
\end{Definition}

It is clear that $1$-conjugate points are the same as usual conjugate points for thermostat geodesics (see \cite{AD1,P} for more details on the thermostat Jacobi equation).

\begin{Theorem}\label{th1}
Let $(M,g,E)$ be an Anosov Gaussian thermostat on a closed oriented Riemannian surface. Assume that $\beta_{\rm ter}\geq (m+1)/2$ for some integer $m\geq 2$, then $I_m$ is $s$-injective.
\end{Theorem}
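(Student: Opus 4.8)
The plan is to follow the Livšic--Fourier--Pestov strategy of \cite{DP,PSU}, adapted to the thermostat frame $\{\bfG_E,V\}$. First I would use the Anosov hypothesis to reduce the tomography problem to a transport equation: if $I_m\varphi\equiv 0$ then $\int_\gamma\varphi\,dt=0$ over every closed thermostat geodesic, so the smooth Livšic theorem for (transitive) Anosov flows produces $u\in C^\infty(SM)$ with $\bfG_E u=-\varphi$, where $\varphi$ now denotes the function on $SM$ induced by the tensor. Since $\varphi$ comes from a symmetric $m$-tensor, its fibrewise Fourier expansion is supported in degrees $-m,-m+2,\dots,m$, and the whole problem reduces to showing that the primitive $u$ has fibrewise Fourier degree at most $m-1$: indeed $\varphi=\bfG_E(-u)$, and once $-u$ is seen to have degree $\le m-1$ a parity argument (using that $\bfG_E$ reverses fibrewise parity, and that $\bfG_E w=0$ forces $w$ constant by transitivity) identifies its relevant part with the function induced by a symmetric $(m-1)$-tensor $h$, giving $\varphi=\bfG_E h$.

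Next I would set up the fibrewise Fourier analysis. Writing $u=\sum_k u_k$ with $Vu_k=iku_k$, introduce the degree-raising and degree-lowering operators $\eta_\pm$ of the geodesic flow (so $X=\eta_++\eta_-$) and their thermostat modifications $\mu_\pm$ that absorb the $\lambda V$ term; since $\lambda$ has Fourier degrees $\pm 1$, one checks that $\bfG_E=\mu_++\mu_-$ with $\mu_\pm\colon\Omega_k\to\Omega_{k\pm 1}$. The equation $\bfG_E u=-\varphi$ then decouples degree by degree into $\mu_+u_{k-1}+\mu_-u_{k+1}=-\varphi_k$, which in degrees $|k|>m$ becomes $\mu_+u_{k-1}+\mu_-u_{k+1}=0$. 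Using the flip symmetry of the thermostat I can treat the positive and negative towers symmetrically, so it suffices to kill the modes $u_k$ for $k\ge m$.

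The core is an energy estimate of Pestov type. From the structure equations for the frame $\{\bfG_E,[V,\bfG_E],V\}$, whose curvature term is exactly the thermostat curvature $\mathbb{K}=K+X_\perp\lambda+\lambda^2+\bfG_E V\lambda$, I would derive a Pestov identity relating $\|[V,\bfG_E]u\|^2$ to $\|\bfG_E Vu\|^2$, $\|\bfG_E u\|^2$ and the curvature term $\langle \mathbb{K}Vu,Vu\rangle$, carrying along the lower-order corrections $V(\lambda)$ and $\bfG_E V(\lambda)$ that appear in \eqref{b-Jeq}. Commuting the degree operators past $\bfG_E$ weights the curvature contribution of the degree-$k$ mode by a factor linear in $k$, so that after summing the telescoping contributions the positivity needed to control the modes $k\ge m$ reduces precisely to the absence of $\beta$-conjugate points at $\beta=(m+1)/2$. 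Concretely, the hypothesis $\beta_{\rm ter}\ge(m+1)/2$ says that \eqref{b-Jeq} is disconjugate along every orbit; by Sturm theory this yields a bounded solution of the associated Riccati equation on all of $SM$, which I would use as an integrating factor to complete the square in the index form. This gives a coercive inequality forcing $u_k=0$ for all $k\ge m$, hence, with the negative tower, $u_k=0$ for $|k|\ge m$.

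I expect the main obstacle to be exactly this analytic step: organizing the $\lambda$-dependent lower-order terms in the Pestov identity so that the index form is governed precisely by the operator of \eqref{b-Jeq}, and making the passage from disconjugacy to a global Riccati solution, and hence to positivity, robust in the presence of the non-self-adjoint damping term $V(\lambda)\dot y$. Tracking how the Fourier degree $k$ enters the curvature weight, and verifying that the worst case over $k\ge m$ is governed by $\beta=(m+1)/2$ rather than a larger value, is the delicate bookkeeping on which the threshold in the statement rests. Once the top modes are removed, a downward induction on the degree, together with the parity/flip considerations above, completes the proof that $\varphi$ is potential and hence that $I_m$ is $s$-injective.
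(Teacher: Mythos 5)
Your overall strategy matches the paper's: the smooth Livsic theorem produces a smooth primitive $h$ with $\bfG_E h=\varphi$, the problem reduces to showing $h$ has fibrewise degree $m-1$, and the main tool is the Pestov identity combined with fibrewise Fourier analysis via the twisted ladder operators $\mu_\pm=\eta_\pm+\lambda_{\pm1}V$, with the curvature term controlled by completing the square against a Riccati solution supplied by disconjugacy of the $\beta$-Jacobi equation (this is precisely the paper's notion of an $\alpha$-controlled thermostat with $\alpha=(\beta-1)/\beta$, Remark~\ref{no b conj impies (b-1)/b-controlled}). However, there is a genuine gap at the endpoint, which is exactly where the theorem lives. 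The hypothesis $\beta_{\rm ter}\geq(m+1)/2$ only yields $\alpha\geq(m-1)/(m+1)$, and at $\alpha=(m-1)/(m+1)$ the weighted Pestov estimate (Proposition~\ref{prop1}) is degenerate: the coefficient $1-m^2+\alpha(m+1)^2$ multiplying $\|\mu_-u_{m+1}\|^2+\|\mu_+u_{-m-1}\|^2$ vanishes identically. The inequality is therefore not coercive and does not ``force $u_k=0$ for all $k\ge m$'' as you assert; it only gives $\mu_-u_m=\mu_+u_{-m}=0$, which a further commutator computation with $[\bfG_E,V]=X_\perp-V(\lambda)V$ upgrades to $\mu_+u_k=0$ for $k\geq m$ and $\mu_-u_{-k}=0$ for $k\geq m$ (Theorem~\ref{alpha controlled injectivity}).

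To pass from this to $u_k=0$ one needs a separate ingredient that your proposal never mentions: the injectivity of $\mu_+:\Omega_k\to\Omega_{k+1}$ for $k\geq1$ and of $\mu_-:\Omega_k\to\Omega_{k-1}$ for $k\leq-1$, a twisted Guillemin--Kazhdan theorem (Proposition~\ref{injectivity of mu operators}). The paper proves it by showing the kernels of $\mu_\pm$ are invariant under the conformal change $(g,E)\mapsto(e^{2\sigma}g,e^{-2\sigma}E)$ and then solving $K-\Delta_g\sigma-\div_gE=c<0$ to reduce to negative thermostat curvature; solvability forces genus $\geq 2$, which for Anosov thermostats is guaranteed by the Plante--Thurston theorem. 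Your closing appeal to a ``downward induction on the degree together with parity/flip considerations'' is not a substitute for this step: the modes are killed by the injectivity of the elliptic operators $\mu_\pm$, not by iterating the energy estimate. As written, your argument establishes the conclusion only under the strict inequality $\beta_{\rm ter}>(m+1)/2$ (equivalently $\alpha>(m-1)/(m+1)$), where the Pestov estimate is genuinely coercive --- that is the content of the unnumbered remark following Lemma~\ref{Corollary}, not of Theorem~\ref{th1}.
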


Theorem \ref{th1} generalizes the corresponding injectivity result in \cite{PSU} which is for the geodesic ray transform. In particular, \cite{PSU} showed the $s$-injectivity of $I_2$ on Anosov surfaces, before which it was only known for Anosov surfaces without focal points \cite{SU}. Recently Guillarmou \cite{Gu} settled the tensor tomography problem on Anosov surfaces for tensor fields of any order. It was proved that $s$-injectivity of $I_2$ also holds on 2D Anosov magnetic surfaces \cite{Ain}. The problem of proving $s$-injectivity of $I_2$ for 2D Anosov Gaussian thermostats without the assumption on terminator values is still open. The difficulty comes from the fact that in general $V(\lambda)$ is nonzero for Gaussian thermostats, see Section 2 for details. 

The condition on $\beta_{\rm ter}$ is closely related to the works \cite{D,Pe} where absence of $\beta$-conjugate points also appears in the case of geodesic flows on manifolds with boundary. When the thermostat curvature is non-positive, i.e. $\mathbb{K}\leq 0$, it is not difficult to see that $\beta_{\rm ter}=\infty$. We get the following result as a corollary of Theorem \ref{th1}, and it generalizs an earlier result \cite{JP} which is for $m=2$.

\begin{Corollary}\label{cor1}
Let $(M,g,E)$ be an Anosov Gaussian thermostat on a closed oriented Riemannian surface of non-positive thermostat curvature. Then $I_m$ is $s$-injective for any integer $m\ge 2$.
\end{Corollary}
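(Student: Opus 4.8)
The plan is to derive Corollary \ref{cor1} directly from Theorem \ref{th1} by showing that the hypothesis $\mathbb{K}\le 0$ forces $\beta_{\rm ter}=\infty$, which in particular guarantees $\beta_{\rm ter}\ge (m+1)/2$ for every integer $m\ge 2$. Thus the entire task reduces to a statement about the $\beta$-Jacobi equation \eqref{b-Jeq}: I must verify that when $\mathbb{K}\le 0$, the thermostat has no $\beta$-conjugate points for every finite $\beta\in[0,\infty)$, so that the supremum defining $\beta_{\rm ter}$ is unbounded.

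First I would fix an arbitrary $\beta\in[0,\infty)$ and an arbitrary thermostat geodesic $\gamma$, and consider a non-trivial solution $y$ of $\ddot y-V(\lambda)\dot y+(\beta\mathbb{K}-\bfG_E V(\lambda))y=0$. The goal is to show $y$ has at most one zero. The natural device is to remove the first-order term by a standard integrating-factor substitution: writing $y=e^{w}z$ with $\dot w=\tfrac12 V(\lambda)$ (so $w(t)=\tfrac12\int_0^t V(\lambda)(\phi_s(\gamma,\dot\gamma))\,ds$) turns the equation into a Schrödinger-type form $\ddot z+ q\,z=0$ for a suitable potential $q$ built from $\beta\mathbb{K}$, $\bfG_E V(\lambda)$, $V(\lambda)^2$ and $\bfG_E V(\lambda)$. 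Since the exponential factor $e^{w}$ never vanishes, $y$ and $z$ have exactly the same zeros, so it suffices to control the zeros of $z$. The heart of the matter is to check that the reduced potential $q$ remains non-positive (or at least sign-controlled) under the assumption $\mathbb{K}\le 0$; here the alternative expression $\mathbb{K}=K+X_\perp\lambda+\lambda^2+\bfG_E V(\lambda)$ given in the excerpt is crucial, because the term $\bfG_E V(\lambda)$ appearing in \eqref{b-Jeq} partially cancels against the corresponding term hidden inside $\mathbb{K}$, and tracking this cancellation is exactly what makes the potential manageable.

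Once the equation is in the form $\ddot z+q z=0$ with $q\le 0$, the conclusion is classical Sturm theory: a solution of such an equation is \emph{disconjugate}, meaning any non-trivial solution has at most one zero on the whole parameter interval. Concretely, if $z$ vanished at two points $t_0<t_1$ one could, after normalizing so that $z>0$ on $(t_0,t_1)$, integrate $\ddot z=-qz\ge 0$ to find that $z$ is convex on $[t_0,t_1]$, which is incompatible with $z$ vanishing at both endpoints while staying positive in between. Hence $z$, and therefore $y$, has at most one zero, so $(M,g,E)$ has no $\beta$-conjugate points for this $\beta$. Since $\beta$ was arbitrary in $[0,\infty)$, the set defining $\beta_{\rm ter}$ contains all of $[0,\infty)$ and thus $\beta_{\rm ter}=\infty$.

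With $\beta_{\rm ter}=\infty$ in hand the corollary is immediate: for any integer $m\ge 2$ we have $(m+1)/2\le\infty=\beta_{\rm ter}$, so the hypothesis of Theorem \ref{th1} is satisfied and $I_m$ is $s$-injective. The main obstacle I anticipate is the bookkeeping in the substitution step: one must be careful that the reduced potential $q$ really is non-positive, since the raw equation \eqref{b-Jeq} mixes $\beta\mathbb{K}$ with the separate term $-\bfG_E V(\lambda)$ and with the first-order coefficient $V(\lambda)$, and it is only after the correct choice of integrating factor and after invoking the identity $\mathbb{K}=K+X_\perp\lambda+\lambda^2+\bfG_E V(\lambda)$ that the sign works out. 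Verifying that the contributions of $V(\lambda)^2$ and of the $\bfG_E V(\lambda)$ terms combine so as not to spoil the sign of $q$ is the delicate computational point; everything after that is a routine appeal to Sturm comparison.
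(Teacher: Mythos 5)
Your overall strategy --- reduce to Theorem \ref{th1} by showing that $\mathbb{K}\le 0$ forces $\beta_{\rm ter}=\infty$ --- is the route the paper announces just before the corollary, and the claim itself is true. However, your proof of the key step has a genuine gap. With $y=e^{w}z$ and $\dot w=\tfrac12 V(\lambda)$ the coefficient of $\dot z$ does vanish, but the reduced potential is
$$
q=\ddot w+\dot w^{2}-V(\lambda)\dot w+\beta\mathbb{K}-\bfG_E V(\lambda)=\beta\mathbb{K}-\tfrac12\,\bfG_E V(\lambda)-\tfrac14\,V(\lambda)^{2},
$$
and the middle term $-\tfrac12\bfG_E V(\lambda)$ has no sign: it is the derivative of $V(\lambda)$ along the flow and oscillates along an orbit. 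The identity $\mathbb{K}=K+X_\perp\lambda+\lambda^{2}+\bfG_E V(\lambda)$ cannot rescue this, because the hypothesis is a sign condition on $\mathbb{K}$ as a whole and not on its summands; rewriting $\beta\mathbb{K}-\tfrac12\bfG_E V(\lambda)=\beta(K+X_\perp\lambda+\lambda^{2})+(\beta-\tfrac12)\bfG_E V(\lambda)$ leaves two terms of unknown sign. So the cancellation you assert does not occur, $q\le 0$ fails in general, and the Sturm argument does not apply as written.

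The repair is to use the full integrating factor rather than half of it: set $z=e^{-W}y$ with $\dot W=V(\lambda)$. Then \eqref{b-Jeq} becomes $\ddot z+V(\lambda)\dot z+\beta\mathbb{K}z=0$, equivalently $\frac{d}{dt}\bigl(e^{W}\dot z\bigr)=-\beta\mathbb{K}\,e^{W}z$, and the troublesome $\bfG_E V(\lambda)$ term is absorbed completely. If $z$ vanished at $t_0<t_1$ with $z>0$ in between, then $e^{W}\dot z$ would be nondecreasing on $[t_0,t_1]$ and strictly positive at $t_0$, so $z$ would be strictly increasing there, contradicting $z(t_1)=0$. Since $e^{W}>0$, $y$ and $z$ have the same zeros, which gives $\beta_{\rm ter}=\infty$ and hence the corollary via Theorem \ref{th1}. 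You should also be aware that the paper's actual proof bypasses conjugate points altogether: $\mathbb{K}\le 0$ makes the thermostat $1$-controlled directly from the definition in Section 3 (equivalently, take $r=V(\lambda)$ in Remark \ref{no b conj impies (b-1)/b-controlled}), and then Lemma \ref{Corollary} together with the smooth Livsic theorem gives $s$-injectivity for every $m\ge 2$ because $1>(m-1)/(m+1)$.
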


According to the result of Wojtkowski \cite[Theorem~5.2]{W} a Gaussian thermostat on a closed surface with negative thermostat curvature is always Anosov.

\begin{Corollary}\label{cor2}
Let $(M,g,E)$ be a Gaussian thermostat on a closed oriented Riemannian surface of negative thermostat curvature. Then $I_m$ is $s$-injective for any integer $m\ge 2$.
\end{Corollary}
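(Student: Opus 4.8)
The plan is to derive Corollary \ref{cor2} as an immediate consequence of Corollary \ref{cor1}, which itself follows from Theorem \ref{th1}. The logical chain is short: the hypothesis we are given is negative thermostat curvature, $\mathbb{K}<0$, and the goal is to conclude $s$-injectivity of $I_m$ for all $m\ge 2$. The essential observation is that negative thermostat curvature upgrades the geometric hypothesis of Corollary \ref{cor1} (which additionally requires the Anosov property as a standing assumption) into a self-contained statement, because the Anosov property is no longer something we must assume separately.

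First I would invoke Wojtkowski's theorem, which is quoted in the excerpt immediately preceding the statement: by \cite[Theorem~5.2]{W}, a Gaussian thermostat on a closed surface with negative thermostat curvature is automatically Anosov. Thus from the single hypothesis $\mathbb{K}<0$ we obtain \emph{both} of the hypotheses needed to apply Corollary \ref{cor1}: the thermostat is Anosov, and its thermostat curvature is non-positive (indeed strictly negative, which in particular is $\le 0$). Second, I would apply Corollary \ref{cor1} directly to conclude that $I_m$ is $s$-injective for every integer $m\ge 2$. This completes the argument, since Corollary \ref{cor1} delivers exactly the desired conclusion.

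For completeness one could unwind why Corollary \ref{cor1} applies by recalling the reasoning behind it: when $\mathbb{K}\le 0$, the zeroth-order coefficient $\beta\mathbb{K}-\bfG_E V(\lambda)$ in the $\beta$-Jacobi equation \eqref{b-Jeq} has no positive contribution from the $\beta\mathbb{K}$ term for any $\beta\ge 0$, so a standard comparison/Sturm-type argument on the first-order ODE satisfied by $\dot y/y$ shows that non-trivial solutions vanish at most once regardless of how large $\beta$ is taken; hence $\beta_{\rm ter}=\infty$, and in particular $\beta_{\rm ter}\ge (m+1)/2$ for every $m$. With the Anosov property supplied by Wojtkowski, Theorem \ref{th1} then yields $s$-injectivity of $I_m$. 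In the present corollary we need not redo this, as it is already packaged inside Corollary \ref{cor1}.

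The proof presents no genuine obstacle, since the substantive analytic work has been carried out in Theorem \ref{th1} and the curvature comparison feeding Corollary \ref{cor1}; the only content here is the realization that negative curvature makes the Anosov hypothesis automatic. The one point requiring minor care is purely logical rather than technical: one must confirm that strict negativity $\mathbb{K}<0$ indeed implies the non-positivity $\mathbb{K}\le 0$ demanded by Corollary \ref{cor1} (it trivially does), and that Wojtkowski's theorem is stated for the closed oriented surface setting matching our hypotheses. Both are immediate, so the corollary follows in essentially two lines.
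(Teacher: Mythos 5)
Your proposal is correct and follows exactly the route the paper intends: Wojtkowski's theorem \cite[Theorem~5.2]{W} supplies the Anosov property from $\mathbb{K}<0$, and then Corollary \ref{cor1} (non-positive thermostat curvature gives $\beta_{\rm ter}=\infty$, hence Theorem \ref{th1} applies) yields the conclusion. Nothing is missing.
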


At the end of the paper, we apply the ideas from Anosov Gaussian thermostats to study the injectivity of the thermostat ray transform on compact surfaces $(M,g)$ with smooth boundaries. We will focus on a class of Gaussian thermostats which are called {\it simple} Gaussian thermostats (see Section 8 for precise definition). Roughly speaking, simple Gaussian thermostats are the analogues of Anosov Gaussian thermostats for manifolds with boundary. 

Simplicity is related to the boundary rigidity problem \cite{Mi} which is a motivation for the tensor tomography problem. It was shown by Pestov and Uhlmann \cite{PU1} that simple surfaces are boundary rigid. Later this rigidity result was generalized to 2D simple magnetic systems \cite{DPSU} and 2D simple systems involving magnetic fields and potentials \cite{AZ}.

\begin{Theorem}\label{th2}
Let $(M,g,E)$ be a simple Gaussian thermostat on a compact oriented Riemannian surface with boundary. Assume that $\beta_{\rm ter}\geq (m+1)/2$ for some integer $m\geq 2$, then $I_m$ is $s$-injective.
\end{Theorem}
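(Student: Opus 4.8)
The plan is to reduce Theorem \ref{th2} to the same interior energy estimate that underlies the closed case of Theorem \ref{th1}, the genuinely new ingredient being the solution of the transport equation with zero boundary data together with its regularity. Suppose $\varphi\in C^\infty(S_m(M))$ satisfies $I_m\varphi\equiv 0$. For $(x,v)\in SM$ let $\tau(x,v)$ be the exit time of the thermostat geodesic $\phi_t(x,v)$, i.e. the first $t\ge 0$ with $\phi_t(x,v)$ in the outflux boundary $\partial_-(SM)$, and set
\[
u(x,v):=\int_0^{\tau(x,v)}\varphi(\phi_t(x,v))\,dt .
\]
By construction $\bfG_E u=-\varphi$ in the interior, while the hypothesis $I_m\varphi\equiv 0$ forces $u$ to vanish on the influx boundary $\partial_+(SM)$; since $\tau\equiv 0$ on $\partial_-(SM)$, the function $u$ vanishes there as well, so $u|_{\partial(SM)}=0$. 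Thus $s$-injectivity will follow once I show that $u$ is induced by a symmetric $(m-1)$-tensor, for then $\varphi=\bfG_E(-u)$ with $-u\in C^\infty(S_{m-1}(M))$.

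The first, and I expect the most delicate, step is to prove that $u\in C^\infty(SM)$ up to the boundary. This is precisely where simplicity is used: strict convexity of $\partial M$ controls the exit time $\tau$ away from the glancing set $\partial_0(SM)$, and the absence of conjugate points makes the thermostat exponential map a diffeomorphism, so that $u$ extends smoothly across $\partial(SM)$. I would establish this by adapting the regularity results of Pestov–Sharafutdinov–Uhlmann for simple geodesic systems, and their magnetic analogue in \cite{DPSU}, to the thermostat flow, handling the singular behaviour of $\tau$ near $\partial_0(SM)$ via the convexity of the boundary. I regard this as the main obstacle, since in the closed setting of Theorem \ref{th1} smoothness of the corresponding first integral is supplied directly by the smooth Livsic theorem for Anosov flows, whereas here it must be proved by hand.

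Granting smoothness, I would run the same Fourier/energy argument as in Theorem \ref{th1}. Expanding $u=\sum_k u_k$ in the vertical Fourier modes ($Vu_k=iku_k$) and writing $\bfG_E=\mu_++\mu_-$, where $\mu_\pm$ raise and lower the Fourier degree by one (the degree-one quantity $\lambda$ contributing to both through $\lambda V$), the transport equation $\bfG_E u=-\varphi$ with $\varphi$ supported in degrees $|k|\le m$ yields a recursion in which the top modes are governed by the $\beta$-Jacobi equation \eqref{b-Jeq}. I would then invoke the thermostat Pestov-type identity established while proving Theorem \ref{th1}; because $u|_{\partial(SM)}=0$ the tangential boundary contributions drop out, and the remaining boundary term carries the favourable sign coming from the convexity of $\partial M$. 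Combined with the hypothesis $\beta_{\rm ter}\ge (m+1)/2$, which renders the index form of \eqref{b-Jeq} nonnegative for the value $\beta=(m+1)/2$ dictated by the top Fourier degree, this estimate forces $u_k=0$ for all $|k|\ge m$.

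It then follows that $u$ is a trigonometric polynomial of degree at most $m-1$ in the fibre. Reversibility of the thermostat, together with the observation that any first integral of a simple thermostat vanishing on $\partial(SM)$ must vanish identically, fixes the parity of $u$ to that of $m-1$, so that $u$ is exactly the function induced by some $h\in C^\infty(S_{m-1}(M))$. Hence $\varphi=\bfG_E(-h)$, which is the asserted $s$-injectivity of $I_m$.
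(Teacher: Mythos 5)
Your outline follows the same route as the paper: solve the transport equation by integrating $\varphi$ to the exit time, observe that $I_m\varphi\equiv 0$ forces $u|_{\partial(SM)}=0$, prove smoothness of $u$ up to the boundary (the paper, like you, imports this from the regularity theory for simple systems, cf.\ Proposition \ref{regularity}), convert the hypothesis $\beta_{\rm ter}\ge (m+1)/2$ into the $(m-1)/(m+1)$-controlled inequality via the Riccati substitution of Remark \ref{no b conj impies (b-1)/b-controlled}, and then run the Pestov/Fourier argument of the closed case with vanishing boundary data. Two small inaccuracies: since $u$ vanishes on \emph{all} of $\partial(SM)$ there is no surviving boundary term in the Pestov identity, so no convexity sign is needed there; and the $\beta$-Jacobi equation does not enter the Fourier recursion directly but only through the control constant.

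There is, however, one genuine gap. The energy estimate does \emph{not} "force $u_k=0$ for all $|k|\ge m$". What Proposition \ref{prop1} with $Q_mu=0$ and $\alpha=(m-1)/(m+1)$ yields is $\mu_-u_m=\mu_+u_{-m}=0$, and the subsequent commutator bookkeeping with $[\bfG_E,V]$ upgrades this to $\mu_+u_k=0$ for $k\ge m$ and $\mu_-u_{-k}=0$ for $k\ge m$. To conclude $u_k=0$ one still needs the injectivity of the twisted Cauchy--Riemann operators $\mu_\pm:\Omega_{\pm k}\to\Omega_{\pm k\pm 1}$ ($k\ge 1$) on sections vanishing on $\partial(SM)$. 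This is a separate analytic ingredient (Proposition \ref{injectivity of mu operators} and its boundary analogue): the paper proves it by showing the kernel of $\mu_\pm$ is conformally invariant under $(g,E)\mapsto(e^{2\sigma}g,e^{-2\sigma}E)$, embedding $M$ into a closed surface of genus $\ge 2$ so that Lemma \ref{negative curvature} produces a conformal representative with negative thermostat curvature, and then running a Pestov-identity argument in that gauge. Without some such argument your chain of implications stops at "$u_{\pm k}$ lies in the kernel of a first-order elliptic operator", which is not yet $u_{\pm k}=0$; note that in the closed Anosov case the genus restriction needed for this step came from Plante--Thurston, a tool unavailable here, so the embedding/conformal-change step cannot simply be imported by saying "same argument as Theorem \ref{th1}".
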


In particular, $\beta_{\rm ter}=\infty$ when the thermostat curvature is non-positive.

\begin{Corollary}
Let $(M,g,E)$ be a simple Gaussian thermostat on a compact oriented Riemannian surface with boundary of non-positive thermostat curvature. Then $I_m$ is $s$-injective for any integer $m\geq 2$.
\end{Corollary}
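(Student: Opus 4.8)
The plan is to deduce the statement directly from Theorem~\ref{th2}. Since the hypothesis there is $\beta_{\rm ter}\ge (m+1)/2$ and we wish to allow $m$ to be an arbitrary integer $\ge 2$, it suffices to prove that non-positive thermostat curvature forces $\beta_{\rm ter}=\infty$. Once this is in hand, for each fixed $m\ge2$ we have $\beta_{\rm ter}=\infty\ge(m+1)/2$, Theorem~\ref{th2} applies verbatim, and $s$-injectivity of $I_m$ follows. So the entire content of the proof is the assertion, already flagged in the remark preceding the statement, that $\mathbb{K}\le0$ implies the absence of $\beta$-conjugate points for every $\beta\in[0,\infty)$.

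To show this, I would fix $\beta\ge0$ and a thermostat geodesic $\gamma$, and abbreviate $\mu(t):=V(\lambda)(\phi_t(x,v))$ where $(x,v)=(\gamma(0),\dot\gamma(0))$; differentiating along the flow gives $\dot\mu=\bfG_E V(\lambda)$ evaluated along $\gamma$. The key step is to rewrite the $\beta$-Jacobi equation~\eqref{b-Jeq} in Sturm--Liouville divergence form. Indeed, the first-order term $-V(\lambda)\dot y$ and the part $-\bfG_E V(\lambda)\,y$ of the zeroth-order coefficient assemble into a single total derivative, since $\mu\dot y+\dot\mu y=\tfrac{d}{dt}(\mu y)$; thus~\eqref{b-Jeq} becomes
\[
\frac{d}{dt}\big(\dot y-\mu y\big)+\beta\,\mathbb{K}\,y=0 .
\]
Setting $w:=\dot y-\mu y$, this reads $\dot w=-\beta\,\mathbb{K}\,y$, so $\dot w\ge0$ at every $t$ where $y\ge0$, because $\beta\ge0$ and $\mathbb{K}\le0$.

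It then remains to run the standard comparison argument on $w$. Suppose a non-trivial solution $y$ had two zeros; pick consecutive zeros $t_0<t_1$ and, after replacing $y$ by $-y$ if necessary, assume $y>0$ on $(t_0,t_1)$. Uniqueness for~\eqref{b-Jeq} forces $\dot y(t_0)>0$ and $\dot y(t_1)<0$, and since $y(t_0)=y(t_1)=0$ we obtain $w(t_0)>0$ and $w(t_1)<0$. But $\dot w\ge0$ on $[t_0,t_1]$ makes $w$ non-decreasing there, a contradiction. Hence every non-trivial solution of~\eqref{b-Jeq} vanishes at most once, for every $\beta\ge0$, so $\beta_{\rm ter}=\infty$ and the corollary follows.

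The only delicate point is the presence of the first-order term $V(\lambda)\dot y$, which is exactly the feature distinguishing Gaussian thermostats from magnetic flows (where $V(\lambda)=0$) and which the paper emphasizes after Theorem~\ref{th1}. A naive convexity argument applied to $y$ itself fails precisely because of this term; the observation that makes the proof ``not difficult'' is that $V(\lambda)$ enters~\eqref{b-Jeq} in the integrable combination $\tfrac{d}{dt}(V(\lambda)\,y)$, which transfers the monotonicity argument from $y$ to $w=\dot y-\mu y$.
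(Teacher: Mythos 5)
Your proposal is correct and follows the same route as the paper: the corollary is deduced from Theorem~\ref{th2} once one knows that $\mathbb{K}\le 0$ forces $\beta_{\rm ter}=\infty$, a fact the paper asserts without proof ("it is not difficult to see"). Your rewriting of \eqref{b-Jeq} as $\tfrac{d}{dt}\bigl(\dot y - V(\lambda)\,y\bigr) = -\beta\,\mathbb{K}\,y$ along the orbit, followed by the monotonicity argument between consecutive zeros, correctly supplies that missing detail.
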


The tensor tomography problems for simple surfaces \cite{PSU2} and 2D simple magnetic systems \cite{Ain2} were proved without curvature assumptions, using a different method which was developed for the boundary case. It is an interesting problem to show $s$-injectivity of $I_m, m\geq 2$ for simple Gaussian thermostats on surfaces.

For manifolds with boundaries, there are also local tensor tomography problems, i.e. whether one can determine a symmetric tensor near a boundary point, up to the natural obstruction, from its integrals along curves near this point? For manifolds of dimension three and higher, there are recent works by Uhlmann and Vasy \cite{UV}, Stefanov, Uhlmann and Vasy \cite{SUV} for the geodesic case, and Zhou \cite[Appendix]{UV} for general smooth curves, including the thermostats. However, the local problem for surfaces is still open.



\subsection{Invariant distributions}
One key ingredient in the proof of the $s$-injectivity of $I_2$ for the case of Anosov surfaces by Paternain, Salo and Uhlmann \cite{PSU} was the surjectivity of the adjoint of the geodesic ray transform acting on $1$-forms. The problem of the surjectivity of the adjoint is interesting in its own right. We also investigate the surjectivity of the adjoint of the thermostat ray transform. However, in the case of thermostat flows the surjectivity of $I_1^*$ seems not enough for proving the $s$-injectivity of $I_2$. In general, thermostats do not preserve the Liouville measure on $SM$ unless $E\equiv0$ (see \cite{DP1,DP}). This is a crucial difference from the case of geodesic flows and magnetic flows, and this makes the problem much harder.

Since the thermostat ray transforms $I_0$ and $I_1$ are $s$-injective for two-dimensional Anosov thermostats, one can consider the surjectivity of $I^*_0$ and $I^*_1$. One of the aims of the current paper is to show that $I^*_0$ and $I^*_1$ are indeed surjective. To study the adjoints, we pause to briefly introduce distributions on $SM$.

Let $\gamma$ be a closed thermostat geodesic and $\delta_\gamma$ denote the measure on $SM$ which corresponds to integrating over $(\gamma,\dot\gamma)$ on $SM$. We can define the thermostat ray transform by the distributional pairing
$$
I\varphi(\gamma)=\<\delta_\gamma,\varphi\>.
$$
Denote by $\cD'(SM)$ the space of distributions on $C^\infty(SM)$. Both of these spaces are reflexive, so the dual of $\cD'(SM)$ is $C^\infty(SM)$. Any differential operator $P$ can act on a distribution $\mu\in\cD'(SM)$ via duality, that is $\<P\mu,\varphi\>:=\<\mu,P^*\varphi\>$ for any $\varphi\in C^\infty(SM)$. Since $\bfG_E=-(\bfG_E+V(\lambda))^*$ (see Section~2), we define the following subspace of $\cD'(SM)$:
$$
\cD'_{\rm inv}(SM):=\{\mu\in\cD'(SM):(\bfG_E+V(\lambda))\mu=0\}.
$$
Hence a distribution $\mu$ is in $\cD'_{\rm inv}(SM)$ if and only if $\<\mu,\bfG_E\varphi\>=0$ for all $\varphi\in C^\infty(SM)$. This agrees with the definition of the thermostat ray transform given by the distributional pairing.

Without loss of generality we can consider the thermostat ray transform $I$ as the map
$$
I:C^\infty(SM)\to L(\cD'_{\rm inv}(SM),\R),\quad I\varphi(\mu)=\<\mu,\varphi\>\text{ for }\mu\in\cD'_{\rm inv}(SM).
$$
By $L(F,\R)$ we mean the space of continuous linear maps from a locally convex topological space $F$ to $\R$. Equip this space with the weak* topology, then $I$ becomes a continuous linear map from a Frech\'e{}t space into $L(\cD'_{\rm inv}(SM),\R)$ which is locally convex. Since $\cD'_{\rm inv}(SM)$ is a closed subspace of a reflexive space $\cD'(SM)$, it is also reflexive. Therefore, the dual of $L(\cD'_{\rm inv}(SM),\R)$ is the space of invariant distributions $\cD'_{\rm inv}(SM)$. This implies that the adjoint of the thermostat ray transform $I$ is the map
$$
I^*:\cD'_{\rm inv}(SM)\to \cD'(SM),\quad \<I^*\mu,\varphi\>=\<\mu,I\varphi\>\text{ for }\varphi\in C^\infty(SM).
$$
On an oriented surface any $u\in C^\infty(SM)$ admits a Fourier expansion $u=\sum_{m\in \Z}u_m$ (see Section~2) where
$$
u_m(x,v):=\frac{1}{2\pi}\int_0^{2\pi}u(\rho_t(x,v))e^{-imt}\,dt,
$$
and $\rho_t$ is the flow generated by $V$. One can use duality to decompose a distribution into its Fourier components. That is, if $\mu\in\cD'(SM)$ then $\<\mu_k,\varphi\>=\<\mu,\varphi_k\>$ for all $\varphi\in C^\infty(SM)$. Now we can give the statements of our results which express the surjectivities of $I_0^*$ and $I_1^*$ in terms of the existence of some invariant distributions.

\begin{Theorem}\label{th4}
Let $(M,g,E)$ be an Anosov Gaussian thermostat on a closed oriented Riemannian surface. Given $f\in C^\infty(M)$, there exists $w\in H^{-1}(SM)$ with $(\bfG_E+V(\lambda))w = 0$ and $w_0 = f$.
\end{Theorem}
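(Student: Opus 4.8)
The plan is to realize the desired $w$ as a twisted invariant distribution for the Anosov flow, to obtain it via the resolvent of the flow for existence and regularity, and to use the $s$-injectivity of $I_0$ to pin down the zeroth Fourier mode. Set $P:=\bfG_E+V(\lambda)$, so that by the adjoint relation $\bfG_E=-(\bfG_E+V(\lambda))^*$ we have $P^*=-\bfG_E$; thus $Pw=0$ is equivalent to $w$ annihilating the range of $\bfG_E$, i.e. $\langle w,\bfG_E\varphi\rangle=0$ for all $\varphi\in C^\infty(SM)$. Integrating $Pw=0$ along the flow $\phi_t$ of $\bfG_E$ shows that such a $w$ is invariant up to the multiplicative cocycle $\exp(-\int_0^t V(\lambda)\circ\phi_s\,ds)$; in other words, the objects we are after are the resonant states at $0$ of the weighted transfer operator $e^{-tP}$ attached to the Anosov flow.

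First I would record that $Pf=Xf+V(\lambda)\,f$ (using $Vf=0$ since $f$ depends only on $x$) has vanishing zeroth Fourier mode, because $Xf$ and $V(\lambda)\,f$ carry only the Fourier modes $\pm1$; hence a solution of the form $w=f+u$ requires solving $Pu=-Pf$ with $u_0=0$. The naive primitive $u=-f$ has the wrong zeroth mode, so rather than solving this transport equation directly I would pass to the resolvent $R(s)=(P+s)^{-1}=\int_0^\infty e^{-st}e^{-tP}\,dt$, which converges for $\mathrm{Re}\,s>0$. Because $(M,g,E)$ is Anosov, $\bfG_E$ generates an Anosov flow and $P$ differs from it only by the zeroth order term $V(\lambda)$; by the theory of resolvents of Anosov flows one may meromorphically continue $R(s)$ to a neighbourhood of $s=0$ on suitable anisotropic Sobolev spaces, with poles at the Pollicott--Ruelle resonances. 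The candidate invariant distribution is then extracted from the Laurent expansion of $R(s)$ at $s=0$ applied to $f$ (equivalently, from the long-time weighted average of $e^{-tP}f$), and the first-order loss of regularity in these estimates is what is meant to yield $w\in H^{-1}(SM)$.

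To arrange the exact normalization $w_0=f$ I would invoke a duality argument. The map $w\mapsto w_0$ from $\cD'_{\rm inv}(SM)$ to $\cD'(M)$ is, up to normalization, the adjoint $I_0^*$, and its range containing all of $C^\infty(M)$ is dual to $I_0$ being injective with closed range. The $s$-injectivity of $I_0$ for Anosov thermostats (due to Dairbekov--Paternain) supplies injectivity, while the Anosov resolvent estimates supply the closed range; together these let me correct the distribution produced above by an element of $\ker P$ so that its zeroth mode is exactly the prescribed $f$.

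The main obstacle is precisely the feature highlighted in the introduction: since the thermostat does not preserve the Liouville measure when $E\not\equiv0$, $\bfG_E$ is not skew-adjoint and the relevant operator is the twisted $P=\bfG_E+V(\lambda)$ rather than $\bfG_E$ itself. Consequently the constant function is no longer an invariant distribution and $s=0$ need not behave as it does for contact Anosov flows, so I must control the leading Pollicott--Ruelle resonance and the corresponding resonant state of the weighted transfer operator $e^{-tP}$, and verify that the construction genuinely produces the prescribed nonzero zeroth mode rather than having it absorbed into the cocycle. Handling this weighted spectral problem, and extracting the sharp $H^{-1}$ bound uniformly as $s\to0$, is the crux of the argument.
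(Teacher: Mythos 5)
Your proposal heads in a genuinely different direction from the paper: a Pollicott--Ruelle resonance / anisotropic-Sobolev analysis of the weighted resolvent, in the spirit of Guillarmou \cite{Gu}, whereas the paper's proof is an elementary energy argument. The paper works with the operator $P=V\bfG_E$ rather than $\bfG_E+V(\lambda)$: combining the Pestov identity with the $\alpha$-controlled estimate of Theorem~\ref{existence of alpha-control} yields the stability estimate $\|u\|_{H^1(SM)}\le C\|V\bfG_E u\|$ for $u\in C^\infty_\diamond(SM)$ (Lemma~\ref{P}); a Hahn--Banach/Riesz duality then produces $h\in L^2(SM)$ with $P^*h=(\bfG_E+V(\lambda))Vh=-(\bfG_E+V(\lambda))f$, and $w:=Vh+f$ finishes the proof. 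The extra $V$ is exactly what makes the normalization work: $(Vh)_0=0$ automatically, so $w_0=f$ comes for free, and $w=Vh+f\in H^{-1}(SM)$ simply because $h\in L^2(SM)$.

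Your sketch has two genuine gaps. First, the normalization $w_0=f$ is not actually achieved: solving $(\bfG_E+V(\lambda))u=-(\bfG_E+V(\lambda))f$ via the resolvent gives no control on $u_0$, and your proposed fix --- correcting by an element of $\ker(\bfG_E+V(\lambda))$ so that the zeroth mode becomes $f$ --- presupposes that one can produce invariant distributions in $H^{-1}(SM)$ with prescribed zeroth Fourier mode, which is precisely the statement being proved. The abstract duality ``$I_0$ injective with closed range implies $I_0^*$ surjective'' is the right principle, but the closed-range assertion (in the relevant topologies) is exactly the content of the theorem; the paper's stability estimate is its quantitative form, and you supply no substitute. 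Second, the analytic core of your route --- meromorphic continuation of the weighted resolvent to $s=0$, the structure of the resonance at $0$ for this non-volume-preserving, non-skew-adjoint generator, and the passage from membership in an anisotropic space to the sharp $H^{-1}$ bound --- is asserted rather than proved, and you yourself flag it as ``the crux.'' As written the proposal is a program, not a proof; if carried out it would likely prove more (as in \cite{Gu} for geodesic flows), but the paper reaches the stated result with far less machinery.
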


As was explained in \cite{PSU}, by the ergodicity of Anosov flows, the only $L^2$ solutions to $Xw=0$ on geodesic flows are constants. Therefore, the optimal regularity that we can expect for solutions to $(\bfG_E+V(\lambda))w=0$ is $H^{-1}$.

\begin{Theorem}\label{th5}
Let $(M,g,E)$ be an Anosov Gaussian thermostat on a closed oriented Riemannian surface. For a given solenoidal $1$-form $\alpha$ (i.e. divergence free), there exists $w\in H^{-1}(SM)$ with $(\bfG_E+V(\lambda))w = 0$ and $w_{-1}+w_1=\alpha$.
\end{Theorem}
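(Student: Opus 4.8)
The plan is to mirror the construction behind Theorem \ref{th4}, but now prescribing the degree $\pm 1$ Fourier modes instead of the degree $0$ mode. First I would encode the solenoidal $1$-form $\alpha$ as the function $\alpha(x,v)=\alpha_x(v)$ on $SM$, which lives purely in Fourier degrees $\pm1$, so $\alpha=\alpha_1+\alpha_{-1}$ with $\alpha_{-1}=\overline{\alpha_1}$. The goal is to produce $w=\sum_m w_m\in H^{-1}(SM)$ with $\mathcal{F}w=0$, where $\mathcal{F}:=\bfG_E+V(\lambda)$, and with $w_{\pm1}=\alpha_{\pm1}$. The structural observation I would record first is that, because $\lambda$ is itself of degree $\pm1$ and $X=\eta_++\eta_-$ shifts Fourier degree by one, the transport operator $\mathcal{F}$ shifts degree by exactly $\pm1$; writing $\mathcal{F}=\mathcal{F}_++\mathcal{F}_-$ accordingly, the invariance equation becomes the coupled system $\mathcal{F}_+w_{m-1}+\mathcal{F}_-w_{m+1}=0$ relating modes of the same parity. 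Since $\alpha$ is odd, the even and odd sectors decouple and the whole problem lives in the odd sector.

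The next point is that the consistency condition at degree $0$ is exactly solenoidality. Reading off the degree-$0$ component of $\mathcal{F}w=0$ gives $\mathcal{F}_+w_{-1}+\mathcal{F}_-w_1=0$, and under the prescription $w_{\pm1}=\alpha_{\pm1}$ this is precisely $(\mathcal{F}\alpha)_0=0$. A short computation shows that the contributions of $\lambda V$ and $V(\lambda)$ to degree $0$ cancel, so $(\mathcal{F}\alpha)_0=(X\alpha)_0$, which vanishes if and only if $d^*\alpha=0$. Thus the solenoidal hypothesis is exactly what makes the prescription $w_{\pm1}=\alpha_{\pm1}$ consistent with invariance, and it guarantees that no spurious degree-$0$ mode is forced. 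What remains is to solve the degree $\pm2,\pm4,\dots$ equations for the higher odd modes $w_{\pm3},w_{\pm5},\dots$.

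The engine for this is the same Anosov solvability used in Theorem \ref{th4}: I would realize $w$ as a limit of the resolvent $(\mathcal{F}+s)^{-1}$ applied to $\alpha$ (placed in degrees $\pm1$) as $s\to0^+$, equivalently via a Pestov-type energy estimate adapted to the thermostat, with hyperbolicity ensuring that after removing the resonant part the limit exists and lands in $H^{-1}$ — the optimal regularity flagged after Theorem \ref{th4}. The construction must be arranged so that it does not perturb the prescribed central modes $w_{\pm1}$, after which one reads off $w_{-1}+w_1=\alpha$ directly.

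The main obstacle, and the place where the thermostat case genuinely departs from the geodesic and magnetic ones, is the term $V(\lambda)$. Since Gaussian thermostats do not preserve the Liouville measure, the clean skew-adjointness is replaced by $\bfG_E^*=-(\bfG_E+V(\lambda))$, and every energy identity picks up zeroth-order terms involving $V(\lambda)$ together with the degree-$\pm1$ coupling between $\lambda$ and the prescribed modes $\alpha_{\pm1}$. Keeping these terms under control — so that the resolvent limit still exists, remains in $H^{-1}$, and does not disturb $w_{\pm1}$ — is the crux of the argument, and is exactly the reason it cannot be a verbatim transcription of the measure-preserving proofs.
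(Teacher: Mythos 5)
Your opening analysis is sound and matches the paper's first step: setting $a:=-(\bfG_E+V(\lambda))\alpha$, the degree-zero component $a_0$ vanishes precisely because $\delta\alpha=0$ is equivalent to $\eta_+\alpha_{-1}+\eta_-\alpha_1=0$ while the degree-zero contributions of $\lambda V$ and $V(\lambda)$ cancel. The gap is in the engine. The paper does not take a resolvent limit; it solves $Q_1^*h=a$ by duality, where $Q_1=T_1V\bfG_E$ and $T_1$ projects onto $\bigoplus_{|k|\ge 2}\Omega_k$. This requires the a priori estimate $\|u\|_{H^1(SM)}\le C\|Q_1u\|$ for $u\in\bigoplus_{|k|\ge1}\Omega_k$ (Lemma~\ref{Corollary}), which is \emph{not} the same estimate as the one behind Theorem~\ref{th4}: prescribing the modes $\pm1$ rather than the mode $0$ forces you to control $\|\bfG_E u\|$ by the \emph{truncated} quantity $\|Q_1u\|$ rather than by $\|Pu\|=\|V\bfG_E u\|$, and this is exactly where the Fourier-localized Pestov inequality of Proposition~\ref{prop1} and the quantitative $\alpha$-controlled property (with $\alpha>0=(m-1)/(m+1)$ for $m=1$, supplied by Theorem~\ref{existence of alpha-control} via the Riccati solutions $r^\pm$) enter. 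Your proposal never produces this estimate; asserting that it is ``the same Anosov solvability used in Theorem~\ref{th4}'' skips the one genuinely new ingredient.

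Second, the requirement that the construction ``does not perturb the prescribed central modes'' is not something one can arrange after the fact in a resolvent scheme; in the paper it is automatic from the shape of the correction. Once $h\in L^2(SM)$ solves $Q_1^*h=(\bfG_E+V(\lambda))VT_1h=-(\bfG_E+V(\lambda))\alpha$ (obtained from the estimate above by Hahn--Banach and Riesz representation, as in Lemma~\ref{Surjectivity for Q*} with $m=1$, which is applicable since $a_0=0$), one sets $w:=VT_1h+\alpha$; because $T_1h$ has only Fourier modes of degree $|k|\ge2$ and $V$ preserves degree, $w_{\pm1}=\alpha_{\pm1}$ holds by construction, and $w\in H^{-1}(SM)$ since $h\in L^2(SM)$. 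Your resolvent sketch leaves both the existence of the limit and the identification of its $\pm1$ modes unproved, and you yourself flag the $V(\lambda)$ terms as ``the crux'' without resolving them. The fix is to replace the resolvent paragraph by the duality chain Proposition~\ref{prop1} $\Rightarrow$ Lemma~\ref{Corollary} $\Rightarrow$ Lemma~\ref{Surjectivity for Q*} applied to $f=a$.
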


One can consider the surjectivity of $I_m^*$ for $m\geq 2$, however the constraint on $m$-tensors may not have explicit geometric meanings as that in the geodesic case. One can also derive surjectivity results on surfaces with boundaries by similar techniques. For the boundary case one should expect to show the existence of smooth invariant functions. This is known for $I_0^*$ and $I^*_1$ on simple manifolds of any dimension, see \cite{PU1} and \cite{DU}. For $I_m^*,\, m\geq 2$, there are results on simple surfaces \cite{PSU4}.

Finally, it's also worth pointing out that recently Paternain, Salo and Uhlmann generalized the techniques for the study of $I$ and $I^*$ on Anosov surfaces to higher dimensional Anosov and simple manifolds \cite{PSU3}.

\section{Pestov identity}

Note that we have a global frame $\{X, X_{\perp},V\}$ for $T(SM)$, which satisfies the structure equations given by $X=[V, X_{\perp}]$, $X_{\perp}=[X,V]$ and $[X, X_{\perp}]=-KV$ where $K$ is the Gaussian curvature of the surface. Using this frame we can define a Riemannian metric on $SM$ by declaring $\{X, X_{\perp}, V\}$ to be an orthonormal basis and the volume form of this metric will be denoted by $d\Sigma^3$.

Recall the generating vector field of a Gaussian thermostat $(M,g,E)$ is $\bfG_E=X+\lambda V$. The fact that $X, X_\perp,V$ are volume preserving implies the following lemma which was proved in \cite[Lemma~3.2]{DP}.
\begin{Lemma}\label{lie}
Let $(M,g,E)$ be a Gaussian thermostat on a closed oriented Riemannian surface. Then the following hold:
$$
L_{{\bfG_E}\,}d\Sigma^3=V(\lambda)\,d\Sigma^3,\quad L_{X_\perp}\,d\Sigma^3=0,\quad L_{V}\,d\Sigma^3=0,
$$
where $L_Z$ denotes the Lie derivative along the vector field $Z$.
\end{Lemma}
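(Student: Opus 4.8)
The plan is to observe that the second and third identities are nothing but two of the three assertions packaged in the phrase that $X$, $X_\perp$, $V$ preserve $d\Sigma^3$, so the only genuine content lies in the first identity. I would therefore take the volume preservation of the frame as the starting input. If one wishes to be self-contained, this input can be recovered from the structure equations $X=[V,X_\perp]$, $X_\perp=[X,V]$, $[X,X_\perp]=-KV$: dualizing $\{X,X_\perp,V\}$ to a coframe $\{\alpha,\beta,\psi\}$ and using $d\theta^i(e_j,e_k)=-\theta^i([e_j,e_k])$ gives
$$
d\alpha=\beta\wedge\psi,\qquad d\beta=-\alpha\wedge\psi,\qquad d\psi=K\,\alpha\wedge\beta,
$$
after which $L_X\,d\Sigma^3=L_{X_\perp}\,d\Sigma^3=L_V\,d\Sigma^3=0$ follow by Cartan's formula $L_Z=d\iota_Z+\iota_Z d$ together with $d(d\Sigma^3)=0$ (recall $\dim SM=3$). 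The two facts I actually use below are $L_X\,d\Sigma^3=0$ and $L_V\,d\Sigma^3=0$.

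For the first identity I would split $\bfG_E=X+\lambda V$ and use that the Lie derivative is additive in its vector-field slot, so
$$
L_{\bfG_E}\,d\Sigma^3=L_X\,d\Sigma^3+L_{\lambda V}\,d\Sigma^3=L_{\lambda V}\,d\Sigma^3.
$$
Since $d\Sigma^3$ is a top-degree form, Cartan's formula reduces this to $L_{\lambda V}\,d\Sigma^3=d(\iota_{\lambda V}\,d\Sigma^3)=d(\lambda\,\iota_V\,d\Sigma^3)$, and the Leibniz rule gives
$$
d(\lambda\,\iota_V\,d\Sigma^3)=d\lambda\wedge\iota_V\,d\Sigma^3+\lambda\,d(\iota_V\,d\Sigma^3)=d\lambda\wedge\iota_V\,d\Sigma^3,
$$
where the last term vanishes because $d(\iota_V\,d\Sigma^3)=L_V\,d\Sigma^3=0$.

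It then remains to evaluate $d\lambda\wedge\iota_V\,d\Sigma^3$. Writing $d\Sigma^3=\alpha\wedge\beta\wedge\psi$ and expanding $d\lambda=X(\lambda)\,\alpha+X_\perp(\lambda)\,\beta+V(\lambda)\,\psi$ in the coframe, one finds $\iota_V\,d\Sigma^3=\alpha\wedge\beta$ (as $\alpha(V)=\beta(V)=0$ and $\psi(V)=1$). Only the $V(\lambda)$ term survives the wedge with $\alpha\wedge\beta$, because $\alpha\wedge\alpha\wedge\beta=\beta\wedge\alpha\wedge\beta=0$ while $\psi\wedge\alpha\wedge\beta=\alpha\wedge\beta\wedge\psi=d\Sigma^3$. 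Hence $d\lambda\wedge\iota_V\,d\Sigma^3=V(\lambda)\,d\Sigma^3$, which gives $L_{\bfG_E}\,d\Sigma^3=V(\lambda)\,d\Sigma^3$ and completes the proof.

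There is no deep obstacle here; the argument is a direct computation once the volume preservation of $X$, $X_\perp$, $V$ is in hand, and that is exactly where the geometry enters (through Liouville's theorem for $X$, the rotational symmetry for $V$, or uniformly through the structure equations). The two points requiring care are purely bookkeeping: first, the Lie derivative is additive but \emph{not} function-linear in the vector field, so one must retain the term $d\lambda\wedge\iota_V\,d\Sigma^3$ that produces the factor $V(\lambda)$; and second, the orientation and sign conventions in the wedge algebra, which however scale both sides of each identity equally and so do not affect the stated conclusions.
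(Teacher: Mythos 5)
Your proof is correct, and it follows essentially the same route the paper intends: the paper does not prove this lemma itself but cites \cite[Lemma~3.2]{DP} together with the remark that $X$, $X_\perp$, $V$ are volume preserving, which is exactly the input you take and then combine with Cartan's formula and the non-function-linearity of the Lie derivative to extract the $V(\lambda)$ term. Your computation (including the structure equations, the identity $\iota_V\,d\Sigma^3=\alpha\wedge\beta$, and the surviving term $V(\lambda)\,\psi\wedge\alpha\wedge\beta=V(\lambda)\,d\Sigma^3$) checks out.
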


For any two functions $u,v:SM\to\C$ define the $L^2$ inner product:
$$
(u,v):=\int_{SM} u\bar v\,d\Sigma^3,
$$
the corresponding norm will be denoted by $\|\cdot\|$.

The space $L^2(SM)$ decomposes orthogonally as a direct sum $$L^2(SM)=\bigoplus_{k\in \mathbb Z}H_k$$ where $H_k$ is the eigenspace of $-iV$ corresponding to the eigenvalue $k$. A function $u\in L^2(SM)$ has a Fourier series expansion $$u=\sum^{\infty}_{k=-\infty} u_k$$ where $u_k\in H_k$, then $\|u\|^2=\Sigma\|u_k\|^2$ with $\|u\|^2=(u, u)^{1/2}$. We denote the subspace $\Omega_k:=H_k\cap C^{\infty}(SM)$.

Consider the isothermal coordinates $(x,y)$ on the surface $(M,g)$ such that the metric can be written as $ds^2=e^{2\rho}(dx^2+dy^2)$ where $\rho\in C^\infty(M,\R)$. This gives coordinates $(x,y,\varphi)$ on $SM$ where $\varphi$ is the angle between a unit vector $v$ and $\de{}{x}$. In these coordinates, the elements in the Fourier expansion of $f=f(x,y,\varphi)$ are given by
$$f_k(x,y,\varphi)=\Big(\frac{1}{2\pi}\int_0^{2\pi}f(x,y,\varphi')e^{−ik\varphi'}\,d\varphi'\Big)e^{ik\varphi}.$$
In particular, for a given symmetric tensor field $f$ of order $m$, $f_k=0$ for $|k|\geq m+1$.

We define the $H^1$-norm of a function $u\in C^\infty(SM)$ as
$$
\|u\|^2_{H^1(SM)}:=\|\bfG_E u\|^2+\|X_\perp u-V(\lambda)Vu\|^2+\|Vu\|^2+\|u\|^2.
$$
Notice that $\|u\|^2_{H^1(SM)}$ is equivalent to the standard $H^1$-norm $\|u\|^2+\|\nabla u\|^2$, where $\nabla u=(Xu,X_{\perp}u,Vu)$.

\begin{Lemma}\label{lemma1}
Let $(M,g,E)$ be a Gaussian thermostat on a closed oriented Riemannian surface. For any two functions $u,v\in C^\infty(SM,\C)$ the following hold
$$
(Vu,v)=-(u,Vv),\quad (X_\perp u,v)=-(u,X_\perp v)$$
{\rm and}$$(\bfG_E u,v)=-(u,\bfG_E v)-(V(\lambda)u,v).
$$
\end{Lemma}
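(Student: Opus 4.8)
The plan is to deduce all three identities from a single integration-by-parts formula valid for any smooth \emph{real} vector field $Z$ on the closed manifold $SM$, and then to specialize $Z$ to $V$, $X_\perp$ and $\bfG_E$, reading off the relevant divergences from Lemma~\ref{lie}.

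First I would establish the key formula. For any $f\in C^\infty(SM,\C)$ the top-degree form $f\,d\Sigma^3$ is closed, so Cartan's formula gives $L_Z(f\,d\Sigma^3)=d(\iota_Z(f\,d\Sigma^3))$, which is exact; since $SM$ is closed, Stokes' theorem yields $\int_{SM}L_Z(f\,d\Sigma^3)=0$. Expanding by the Leibniz rule, $L_Z(f\,d\Sigma^3)=(Zf)\,d\Sigma^3+f\,L_Z d\Sigma^3$, so that
$$
\int_{SM}(Zf)\,d\Sigma^3=-\int_{SM}f\,L_Z d\Sigma^3 .
$$
Writing $L_Z d\Sigma^3=(\div Z)\,d\Sigma^3$, this reads $\int_{SM}(Zf)\,d\Sigma^3=-\int_{SM}(\div Z)\,f\,d\Sigma^3$.

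Next I would apply this with $f=u\bar v$. Because $Z$ is real one has $Z(\bar v)=\overline{Zv}$, so the Leibniz rule gives $Z(u\bar v)=(Zu)\bar v+u\,\overline{Zv}$, and the displayed identity becomes
$$
(Zu,v)+(u,Zv)=-((\div Z)\,u,v).
$$
By Lemma~\ref{lie}, $\div V=0$ and $\div X_\perp=0$, which immediately yields the first two identities $(Vu,v)=-(u,Vv)$ and $(X_\perp u,v)=-(u,X_\perp v)$. Taking $Z=\bfG_E$, for which $L_{\bfG_E}d\Sigma^3=V(\lambda)\,d\Sigma^3$ gives $\div\bfG_E=V(\lambda)$, produces the third identity $(\bfG_E u,v)=-(u,\bfG_E v)-(V(\lambda)u,v)$.

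The argument is essentially routine once Lemma~\ref{lie} is available; the only points requiring care are those responsible for the correct appearance of the divergence term. Namely, $V$, $X_\perp$ and $\bfG_E=X+\lambda V$ must all be \emph{real} vector fields, so that complex conjugation passes through them and the factor $\bar v$ produces no spurious term, and the vanishing of $\int_{SM}L_Z(f\,d\Sigma^3)$ genuinely uses that $SM$ has no boundary. The essential geometric input — the value $\div\bfG_E=V(\lambda)$, which is precisely what distinguishes the thermostat from the geodesic case — is the content of Lemma~\ref{lie}, so the work here is purely formal.
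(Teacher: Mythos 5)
Your proof is correct and follows the same route as the paper: both rest on the Stokes-type identity $\int_{SM}(Zf)\,d\Sigma^3=-\int_{SM}f\,L_Z d\Sigma^3$ applied with $f=u\bar v$, combined with the Lie derivative computations of Lemma~\ref{lie}. You merely spell out the details (Cartan's formula, the Leibniz rule, and the reality of the vector fields) that the paper leaves implicit.
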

\begin{proof}
We will use the following consequence of Stokes' theorem. Let $N$ be a closed oriented manifold and $\Theta$ be a volume form. Let $\mathfrak{X}$ be a vector field on $N$ and $f\in C^\infty(N)$. Then the following holds
\begin{equation}\label{stokes}
\int_N \mathfrak{X}(f)\Theta=-\int_N f L_{\mathfrak{X}}\Theta.
\end{equation}
Now, the statement of the lemma is the consquence of Lemma~\ref{lie} and \eqref{stokes}.
\end{proof}

In particular, Lemma \ref{lemma1} implies the following expressions for the adjoints
$$
X_\perp^*=-X_\perp,\quad V^*=-V,\quad \bfG_E^*=-(\bfG_E+V(\lambda)).
$$

The following integral identity will play a fundamental role in our arguments. Its proof can be found in \cite[Theorem 3.3]{DP}, which is valid for more general thermostats.
\begin{Theorem}[Pestov identity]\label{pestov}
Let $(M,g,E)$ be a Gaussian thermostat on a closed oriented Riemannian surface. If $u\in C^\infty(SM,\C)$, then
$$
\|\bfG_E Vu\|^2-(\mathbb{K} Vu,Vu)=\|V\bfG_E u\|^2-\|\bfG_E u\|^2.
$$
\end{Theorem}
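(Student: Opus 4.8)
The plan is to follow the derivation of the classical Pestov identity for the geodesic flow, replacing $X$ by $\bfG_E$ and $K$ by $\mathbb{K}$, while keeping careful track of the extra terms caused by the non-skew-adjointness of $\bfG_E$ and by the appearance of $\lambda$ in the commutators. Everything rests on two structure identities, read off directly from the frame relations $X=[V,X_\perp]$, $X_\perp=[X,V]$, $[X,X_\perp]=-KV$ together with the decomposition $\bfG_E=X+\lambda V$:
$$[\bfG_E,V]=X_\perp-V(\lambda)V,\qquad [\bfG_E,X_\perp]=\lambda\bfG_E-\mathbb{K}V+(\bfG_E V\lambda)V.$$
The second one is where the thermostat curvature enters: expanding $[\bfG_E,X_\perp]=-KV+\lambda X-X_\perp(\lambda)V$ and substituting $\lambda X=\lambda\bfG_E-\lambda^2V$, the coefficient of $V$ becomes $-(K+X_\perp\lambda+\lambda^2)=-(\mathbb{K}-\bfG_E V\lambda)$, precisely the combination defining $\mathbb{K}$.

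First I would use the first structure identity to write $V\bfG_E u=\bfG_E Vu-(X_\perp u-V(\lambda)Vu)$ and expand the norm:
$$\|V\bfG_E u\|^2=\|\bfG_E Vu\|^2-2\,\mathrm{Re}\,(\bfG_E Vu,\,X_\perp u-V(\lambda)Vu)+\|X_\perp u-V(\lambda)Vu\|^2.$$
After cancelling $\|\bfG_E Vu\|^2$, the identity reduces to showing that the last two groups of terms equal $\|\bfG_E u\|^2-(\mathbb{K}Vu,Vu)$.

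The heart of the matter is the cross term $(\bfG_E Vu,X_\perp u)$, which I would integrate by parts using $\bfG_E^*=-(\bfG_E+V(\lambda))$, $X_\perp^*=-X_\perp$ and $V^*=-V$ from Lemma~\ref{lemma1}, and then commute $\bfG_E$ past $X_\perp$ by the second structure identity. Writing $(\bfG_E Vu,X_\perp u)=-(Vu,(\bfG_E+V(\lambda))X_\perp u)$ and inserting $\bfG_E X_\perp=X_\perp\bfG_E+\lambda\bfG_E-\mathbb{K}V+(\bfG_E V\lambda)V$ produces the term $(\mathbb{K}Vu,Vu)$, the sole source of the full curvature, since $K$ only ever enters through $[X,X_\perp]=-KV$. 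Integrating $(Vu,X_\perp\bfG_E u)$ by parts once more and re-applying the first structure identity turns the remaining pieces into $\|\bfG_E u\|^2$ together with several terms built from $V(\lambda)$; a short check shows that the curvature and the $\|\bfG_E u\|^2$ terms appear with exactly the right coefficients, while $\|X_\perp u\|^2$ drops out against the corresponding piece of $\|X_\perp u-V(\lambda)Vu\|^2$.

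The main obstacle is purely organizational: the non-skew-adjointness of $\bfG_E$ and the $\lambda$-dependence of both commutators generate a host of auxiliary terms containing $V(\lambda)$, $X_\perp\lambda$, $\lambda^2$ and $\bfG_E V\lambda$, and the crux is to verify that these either cancel in conjugate pairs or assemble exactly into $K+X_\perp\lambda+\lambda^2+\bfG_E V\lambda=\mathbb{K}$, leaving no spurious remainder. Each individual step is a routine integration by parts; the only real work lies in this bookkeeping, for which the two structure identities above are the organizing principle.
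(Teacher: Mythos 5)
Your proposal is correct, and it is worth noting at the outset that the paper does not actually prove this identity: it simply cites \cite[Theorem 3.3]{DP}, so your argument is a genuine self-contained derivation (along the same standard commutator lines used in that reference and in \cite{PSU}). Both of your structure identities check out: $[\bfG_E,V]=X_\perp-V(\lambda)V$ and $[\bfG_E,X_\perp]=\lambda\bfG_E-\mathbb{K}V+(\bfG_E V\lambda)V$, the latter because $[X,X_\perp]=-KV$ and $[\lambda V,X_\perp]=\lambda X-X_\perp(\lambda)V$ combine, after $\lambda X=\lambda\bfG_E-\lambda^2V$, into $\lambda\bfG_E-(K+X_\perp\lambda+\lambda^2)V$ and $K+X_\perp\lambda+\lambda^2=\mathbb{K}-\bfG_E V\lambda$. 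The bookkeeping you defer does close up with no remainder: using $\bfG_E^*=-(\bfG_E+V(\lambda))$, $X_\perp^*=-X_\perp$, $V^*=-V$ together with the second structure identity and the frame relation $X=[V,X_\perp]$, one gets
$$2\,\mathrm{Re}\,(\bfG_E Vu,X_\perp u)=\|X_\perp u\|^2-\|\bfG_E u\|^2+\bigl((K+X_\perp\lambda+\lambda^2)Vu,Vu\bigr)-2\,\mathrm{Re}\,(V(\lambda)Vu,X_\perp u)$$
and, via $\int_{SM}\bfG_E(f)\,d\Sigma^3=-\int_{SM}fV(\lambda)\,d\Sigma^3$,
$$2\,\mathrm{Re}\,(\bfG_E Vu,V(\lambda)Vu)=-\|V(\lambda)Vu\|^2-\bigl((\bfG_E V\lambda)Vu,Vu\bigr);$$
subtracting twice the first minus twice the second from $\|X_\perp u-V(\lambda)Vu\|^2$, the $\|X_\perp u\|^2$, $\|V(\lambda)Vu\|^2$ and cross terms cancel and what remains is exactly $\|\bfG_E u\|^2-(\mathbb{K}Vu,Vu)$, as required. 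One small imprecision in your write-up: to handle $(Vu,X_\perp\bfG_E u)$ you must commute $X_\perp$ past $V$ using $X_\perp V=VX_\perp-X$ (this is where $\|\bfG_E u\|^2$ is produced, via $(Xu,\bfG_E u)=\|\bfG_E u\|^2-(\lambda Vu,\bfG_E u)$), which is the frame relation $X=[V,X_\perp]$ rather than your first structure identity; since you list that relation at the start, this is a matter of wording, not a gap. Note also that nothing in your argument uses that $\lambda$ is the restriction of a $1$-form, so, like the cited result, your proof is valid for generalized thermostats with arbitrary $\lambda\in C^\infty(SM)$.
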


\begin{Remark}
\rm The Pestov identity above also holds for Gaussian thermostats on a compact oriented surfaces with smooth boundaries provided that $u|_{\p SM}=0$.
\end{Remark}


\section{$\alpha$-controlled thermostats}

For $\alpha\in[0,1]$, we say that a Gaussian thermostat $(M,g,E)$ on a closed surface is {\it $\alpha$-controlled} if for any $u\in C^\infty(SM)$ ( $u\in C^{\infty}_0(SM)$ for compact surfaces with boundaries) the following holds
$$
\|\bfG_E u\|^2-(\K u,u)\ge\alpha\|\bfG_E u\|^2.
$$
It is obvious that if $\mathbb{K}\leq 0$, then $(M,g,E)$ is $1$-controlled.

\begin{Theorem}\label{existence of alpha-control}
Let $(M,g,E)$ be an Anosov Gaussian thermostat on a closed surface. Then there is an $\alpha>0$ such that
$$
\|\mathbf G_E\varphi\|^2-(\K\varphi,\varphi)\ge\alpha\left(\|\mathbf G_E\varphi\|^2+\|\varphi\|^2\right)
$$
for all $\varphi\in C^\infty(SM)$.
\end{Theorem}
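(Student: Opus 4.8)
The plan is to exploit the Anosov property through the stable and unstable Riccati solutions of the thermostat Jacobi equation, in direct analogy with the geodesic case. Since the flow is Anosov it has no conjugate points and the invariant splitting $T(SM)=\R\bfG_E\oplus E^{u}\oplus E^{s}$ is H\"older continuous and transverse. Projecting $E^{u}$ and $E^{s}$ to the normal Jacobi picture governed by the $\beta=1$ case of \eqref{b-Jeq} produces two bounded real-valued functions $r_{+},r_{-}$ on $SM$, H\"older continuous and smooth along each orbit, solving the Riccati equation $\bfG_E r+r^{2}-V(\lambda)r+\K-\bfG_E V(\lambda)=0$. Transversality of $E^{u}$ and $E^{s}$ together with compactness of $SM$ yields a uniform gap $r_{+}-r_{-}\ge c>0$. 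I would take the existence of these solutions and the gap as the geometric input, referring to \cite{DP,W} for the construction.

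The analytic heart is a pointwise completion-of-the-square identity. Setting $a_{\pm}:=r_{\pm}-V(\lambda)$, I claim that for every $\varphi\in C^\infty(SM)$
$$
\|\bfG_E\varphi\|^{2}-(\K\varphi,\varphi)=\|\bfG_E\varphi-a_{\pm}\varphi\|^{2}.
$$
To see this, expand the right-hand side and integrate by parts the cross term. Since $a_{\pm}$ is real, $2\,{\rm Re}(\bfG_E\varphi,a_{\pm}\varphi)=\int_{SM}a_{\pm}\,\bfG_E(|\varphi|^{2})\,d\Sigma^{3}$, and applying \eqref{stokes} to $f=|\varphi|^{2}a_{\pm}$ with $L_{\bfG_E}d\Sigma^{3}=V(\lambda)\,d\Sigma^{3}$ from Lemma~\ref{lie} gives $2\,{\rm Re}(\bfG_E\varphi,a_{\pm}\varphi)=-\int_{SM}|\varphi|^{2}\bigl(\bfG_E a_{\pm}+V(\lambda)a_{\pm}\bigr)\,d\Sigma^{3}$. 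A short computation shows $\bfG_E a_{\pm}+a_{\pm}^{2}+V(\lambda)a_{\pm}=-\K$ precisely when $r_{\pm}$ solves the Riccati equation, and the identity follows.

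With the two representations in hand the coercivity is elementary. The pointwise parallelogram inequality gives $\|\bfG_E\varphi-a_{+}\varphi\|^{2}+\|\bfG_E\varphi-a_{-}\varphi\|^{2}\ge\frac{1}{2}\|(a_{+}-a_{-})\varphi\|^{2}=\frac{1}{2}\|(r_{+}-r_{-})\varphi\|^{2}\ge\frac{c^{2}}{2}\|\varphi\|^{2}$, so that $\|\bfG_E\varphi\|^{2}-(\K\varphi,\varphi)\ge\frac{c^{2}}{4}\|\varphi\|^{2}$. Using a single representation together with the uniform bound on $a_{+}$ gives $\|\bfG_E\varphi\|^{2}\le 2\|\bfG_E\varphi-a_{+}\varphi\|^{2}+2\|a_{+}\|_{\infty}^{2}\|\varphi\|^{2}\le C\bigl(\|\bfG_E\varphi\|^{2}-(\K\varphi,\varphi)\bigr)$. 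Combining the two bounds produces an $\alpha>0$ with $\|\bfG_E\varphi\|^{2}-(\K\varphi,\varphi)\ge\alpha\bigl(\|\bfG_E\varphi\|^{2}+\|\varphi\|^{2}\bigr)$, as required.

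The main obstacle is the regularity bookkeeping in the second step: $r_{\pm}$ are only H\"older continuous in the directions transverse to the flow, so the integration by parts must be justified with care. The observation that rescues the argument is that only the flow derivative $\bfG_E a_{\pm}$ enters, and this is continuous since $r_{\pm}$ solves the Riccati ODE along each orbit, forcing $\bfG_E r_{\pm}=-r_{\pm}^{2}+V(\lambda)r_{\pm}-\K+\bfG_E V(\lambda)$ to be continuous. I would make this rigorous either by disintegrating $d\Sigma^{3}$ along the flow or by mollifying $r_{\pm}$ and passing to the limit. Establishing the uniform gap $r_{+}-r_{-}\ge c>0$ quantitatively from the Anosov condition is the other delicate point, but it follows from compactness once transversality of the splitting is in place.
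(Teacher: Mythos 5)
Your proposal is correct and follows essentially the same route as the paper: the same Riccati solutions $r^{\pm}$ from \cite{DP}, the same completion-of-the-square identity $\|\mathbf G_E\varphi\|^2-(\K\varphi,\varphi)=\|\mathbf G_E\varphi-(r^{\pm}-V(\lambda))\varphi\|^2$ proved by the same integration by parts against $L_{\bfG_E}d\Sigma^3=V(\lambda)\,d\Sigma^3$, and the same use of the uniform gap $r^+-r^-\ge c>0$ to extract coercivity. The only cosmetic difference is that you close the argument with the parallelogram and triangle inequalities where the paper writes $\varphi$ and $\mathbf G_E\varphi$ explicitly as linear combinations of $A$ and $B$; both yield the same $\alpha$-type bound.
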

\begin{proof}
Consider the following Riccati type equation
$$
\mathbf G_E(r-V(\lambda))+r(r-V(\lambda))+\K=0.
$$
It was shown in \cite{DP} that for Anosov thermostats there are real-valued continuous solutions $r^\pm$ (on $SM$) to this equation, which are differentiable along the thermostat flow and satisfy $r^+-r^->0$. We prove that the following integral identity holds
\begin{equation}\label{non-negative}
\|\mathbf G_E\varphi\|^2-(\K\varphi,\varphi)=\|\mathbf G_E\varphi-r\varphi+V(\lambda)\varphi\|^2,
\end{equation}
where $r=r^\pm$.
\begin{multline*}
|\mathbf G_E\varphi-r\varphi+V(\lambda)\varphi|^2=|\mathbf G_E(\varphi)|^2+|r\varphi|^2+|V(\lambda)\varphi|^2-2\mathfrak{Re}(r\mathbf G_E(\varphi)\,\overline{\varphi})\\
+2\mathfrak{Re}(V(\lambda)\mathbf G_E(\varphi)\,\overline{\varphi})-2rV(\lambda)|\varphi|^2.
\end{multline*}
Since $r$ satisfies the Ricatti equation,
\begin{equation*}
\begin{split}
|\mathbf G_E\varphi-r\varphi+V(\lambda)\varphi|^2=|\mathbf G_E(\varphi)|^2 & -\K|\varphi|^2+|V(\lambda)\varphi|^2\\
& -\mathbf G_E((r-V(\lambda))|\varphi|^2)-rV(\lambda)|\varphi|^2.
\end{split}
\end{equation*}
Integrate this over $SM$ and use \eqref{stokes} together with Lemma~\ref{lie} to derive \eqref{non-negative}.

Let $A:=\mathbf G_E\varphi-r^+\varphi+V(\lambda)\varphi$ and $B:=\mathbf G_E\varphi-r^-\varphi+V(\lambda)\varphi$, the equation \eqref{non-negative} implies $\|A\|=\|B\|$. We obtain the following expressions for $\varphi$ and $\mathbf G_E\varphi$
\begin{align*}
&\varphi=(r^+-r^-)^{-1}(A-B),\\
&\mathbf G_E\varphi=(1-c)A+cB,
\end{align*}
where $c:=\frac{r^+-V\lambda}{r^+-r^-}$. From these equations one concludes that there is an $\alpha>0$ such that
$$
2\alpha\|\varphi\|^2\le \|A\|^2,\quad 2\alpha\|\mathbf G_E\varphi\|^2\le \|A\|^2.
$$
Combining above inequalities with \eqref{non-negative}, this completes the proof.
\end{proof}

\begin{Remark}\label{no b conj impies (b-1)/b-controlled}{\rm
The proof of Theorem~\ref{existence of alpha-control} shows that the following more general statement holds: if there is a bounded measurable function $r:SM\to \R$ such that
$$
\bfG_E (r-V(\lambda))+r(r-V(\lambda))+\beta \mathbb{K}\le 0,
$$
then the Gaussian thermostat $(M,g,E)$ is $(\beta-1)/\beta$-controlled.}
\end{Remark}



\section{Surjectivity of $I^*_0$}

This section is devoted to the surjectivity of the adjoint of the thermostat ray transform acting on functions, i.e. $I^*_0$. To prove the surjectivity of $I^*_0$, we need to study the properties of the operator $P:=V\bfG_E$. Appling Lemma~\ref{lemma1}, it is easy to see that $P^*=(\bfG_E+V(\lambda))V$. If $F$ is a subspace of $\mathcal D'(SM)$, we denote by $F_\diamond$ the subspace of those $v\in F$ such that $\<v,1\>=0$.
\begin{Lemma}\label{P}
Let $(M,g,E)$ be an Anosov Gaussian thermostat on a closed oriented Riemannian surface. Then there is a positive constant $C$ such that
$$
\|u\|_{H^1(SM)}\le C\|Pu\|
$$
for all $u\in C^\infty_\diamond(SM)$.
\end{Lemma}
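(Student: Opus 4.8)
The plan is to bound each of the four terms in the definition of $\|u\|_{H^1(SM)}$ by a constant multiple of $\|Pu\|^2$, exploiting the interplay between the Pestov identity (Theorem~\ref{pestov}), the $\alpha$-control estimate (Theorem~\ref{existence of alpha-control}, which is available because the thermostat is Anosov), and the commutator structure of the frame $\{X,X_\perp,V\}$.

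First I would record a commutator identity. From the structure equation $X_\perp=[X,V]$ and the elementary computation $[V,\lambda V]=V(\lambda)V$ one finds $V\bfG_E-\bfG_E V=-X_\perp+V(\lambda)V$, so that
$$
Pu=V\bfG_E u=\bfG_E Vu-\big(X_\perp u-V(\lambda)Vu\big).
$$
This already reduces the control of the second $H^1$-term to the control of $\|\bfG_E Vu\|$ and $\|Pu\|$, since $\|X_\perp u-V(\lambda)Vu\|\le\|\bfG_E Vu\|+\|Pu\|$. It is precisely this identity that explains why the combination $X_\perp u-V(\lambda)Vu$, rather than $X_\perp u$ alone, appears in the definition of the $H^1$-norm.

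The central step is to apply the $\alpha$-control inequality to $\varphi=Vu$ and to rewrite its left-hand side through the Pestov identity. Theorem~\ref{pestov} gives $\|\bfG_E Vu\|^2-(\mathbb{K}Vu,Vu)=\|Pu\|^2-\|\bfG_E u\|^2$, while Theorem~\ref{existence of alpha-control} applied to $\varphi=Vu$ bounds the same quantity below by $\alpha\big(\|\bfG_E Vu\|^2+\|Vu\|^2\big)$. Combining,
$$
\|Pu\|^2-\|\bfG_E u\|^2\ge\alpha\big(\|\bfG_E Vu\|^2+\|Vu\|^2\big)\ge0,
$$
which at once yields $\|\bfG_E u\|^2\le\|Pu\|^2$, $\|Vu\|^2\le\alpha^{-1}\|Pu\|^2$, and $\|\bfG_E Vu\|^2\le\alpha^{-1}\|Pu\|^2$. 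Feeding the last bound into the commutator identity then controls $\|X_\perp u-V(\lambda)Vu\|$ as well. Thus three of the four $H^1$-terms, and hence the full gradient $\nabla u=(Xu,X_\perp u,Vu)$ via $Xu=\bfG_E u-\lambda Vu$ and $X_\perp u=(X_\perp u-V(\lambda)Vu)+V(\lambda)Vu$ with $\lambda,V(\lambda)$ bounded, are controlled by $\|Pu\|$.

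It remains to bound $\|u\|$ itself, and for this I would invoke a Poincar\'e inequality: since $M$ is connected, $SM$ is a connected closed manifold, so there is a constant $C$ with $\|u-\bar u\|\le C\|\nabla u\|$, where $\bar u$ is the mean of $u$ against $d\Sigma^3$. The hypothesis $u\in C^\infty_\diamond(SM)$ means $\langle u,1\rangle=0$, i.e. $\bar u=0$, whence $\|u\|\le C\|\nabla u\|\le C'\|Pu\|$. Summing the four estimates gives the claim. The step I expect to require the most care is the combination of the Pestov identity with the $\alpha$-control estimate: it is essential that the curvature-weighted quadratic form $\|\bfG_E Vu\|^2-(\mathbb{K}Vu,Vu)$ match exactly on the two sides, and that the Anosov hypothesis deliver a strictly positive $\alpha$, so that the defect $-\|\bfG_E u\|^2$ on the left is absorbed rather than obstructing the estimate.
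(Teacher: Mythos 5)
Your argument is correct and follows the same route as the paper's proof: the key inequality $\|Pu\|^2-\|\bfG_E u\|^2\ge\alpha(\|\bfG_E Vu\|^2+\|Vu\|^2)$ obtained by combining the Pestov identity with the $\alpha$-control estimate applied to $\varphi=Vu$, the commutator $[\bfG_E,V]u=X_\perp u-V(\lambda)Vu$ to recover the remaining gradient component, and the Poincar\'e inequality on the mean-zero subspace to control $\|u\|$. The only differences are cosmetic (you extract the three bounds separately and use the plain triangle inequality where the paper uses the squared version).
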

\begin{proof}
Apply Pestov identity and Theorem~\ref{existence of alpha-control} for $u\in C^\infty(SM)$
\begin{equation}\label{temp-surj}
\begin{split}
\|V\bfG_E u\|^2 & =\|\mathbf G_EVu\|^2-(\mathbb KVu,Vu)+\|\mathbf G_Eu\|^2\\
& \ge \|\mathbf G_Eu\|^2+\alpha(\|\mathbf G_EVu\|^2+\|Vu\|^2).
\end{split}
\end{equation}
Recall the commutation relation $[\mathbf G_E,V]u=X_\perp u-V(\lambda)Vu$, which implies that
$$
\|X_\perp u-V(\lambda)Vu\|^2\le 2(\|\mathbf G_EVu\|^2+\|V\mathbf G_Eu\|^2).
$$
Therefore,
\begin{equation}\label{temp-surj 2}
\|\mathbf G_EVu\|^2\ge\frac{1}{2}\|X_\perp u-V(\lambda)Vu\|^2-\|V\mathbf G_Eu\|^2.
\end{equation}
Thus, there are constants $C', C''>0$ such that
$$
C'\|\nabla u\|^2\leq\|\bfG_E u\|^2+\|X_\perp u-V(\lambda)Vu\|^2+\|Vu\|^2\le C''\|Pu\|^2,
$$
here $\nabla u=(Xu, X_{\perp}u, Vu)$.
By the Poincar\'e inequality, there are constants $D, D'>0$ satisfying
$$
\|u\|^2\le D(\|\mathbf G_Eu\|^2+\|X_\perp u\|^2+\|Vu\|^2)\leq D'\|\nabla u\|^2
$$
for all $u\in C^\infty_\diamond(SM)$. Hence, there is $C>0$ such that
$$
\|u\|_{H^1(SM)}\le C\|Pu\|
$$
for all $u\in C^\infty_\diamond(SM)$.
\end{proof}

Lemma \ref{P} implies a solvability result for the adjoint $P^*$.
\begin{Lemma}\label{Surjectivity of P*}
Let $(M,g,E)$ be an Anosov Gaussian thermostat on a closed oriented Riemannian surface. For any $f\in H^{-1}_\diamond(SM)$ there is $h\in L^2(SM)$ such that
$$
P^* h=f\quad\text{in}\quad SM.
$$
Moreover, $\|h\|\le C \|f\|_{H^{-1}(SM)}$ with $C>0$ being independent of $f$.
\end{Lemma}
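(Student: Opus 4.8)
The plan is to derive Lemma~\ref{Surjectivity of P*} from Lemma~\ref{P} by a standard functional-analytic duality argument, essentially the Hahn--Banach / closed-range mechanism that converts a coercive estimate for an operator into a solvability statement for its adjoint. First I would record that Lemma~\ref{P} is precisely an a~priori estimate saying that $P$ is bounded below (up to the lower-order mode $u_0$) in the $H^1 \to L^2$ operator norm on the subspace $C^\infty_\diamond(SM)$, and that $P^*=(\bfG_E+V(\lambda))V$ maps in the opposite direction $L^2 \to H^{-1}$. The target equation $P^*h=f$ with $f\in H^{-1}_\diamond(SM)$ should be solved weakly: I seek $h\in L^2(SM)$ such that $(h,Pu)=\langle f,u\rangle$ for all test functions $u$. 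The condition $\langle f,1\rangle=0$ (the meaning of the subscript $\diamond$) is exactly the compatibility condition needed, since $P1=V\bfG_E 1 = V\lambda$ and more to the point $u_0$ sits in the kernel-direction that the estimate cannot control.

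The key steps, in order: (i) consider the linear functional $\ell$ defined on the image $P(C^\infty_\diamond(SM)) \subset L^2(SM)$ by $\ell(Pu):=\langle f,u\rangle$; (ii) verify $\ell$ is well defined and bounded on this image with respect to the $L^2$-norm, which is where Lemma~\ref{P} enters decisively, giving $|\langle f,u\rangle|\le \|f\|_{H^{-1}}\|u\|_{H^1}\le C\|f\|_{H^{-1}}\|Pu\|$, so $\ell$ has norm at most $C\|f\|_{H^{-1}(SM)}$; (iii) extend $\ell$ by Hahn--Banach to all of $L^2(SM)$ without increasing the norm, and invoke the Riesz representation theorem to produce $h\in L^2(SM)$ with $\ell(\cdot)=(h,\cdot)$ and $\|h\|\le C\|f\|_{H^{-1}(SM)}$; (iv) unwind the definitions: for every $u\in C^\infty_\diamond(SM)$ one has $(h,Pu)=\ell(Pu)=\langle f,u\rangle$, which says $\langle P^*h,u\rangle=\langle f,u\rangle$ in the distributional sense, hence $P^*h=f$ after checking that both sides agree on the remaining constant mode $u_0$ (here the hypothesis $f\in H^{-1}_\diamond$ and the fact that $P$ annihilates the relevant constant direction close the gap). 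The norm bound in the statement is inherited verbatim from step~(iii).

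The main obstacle I expect is step~(ii), specifically the well-definedness of $\ell$ on $P(C^\infty_\diamond(SM))$, which requires that $Pu_1=Pu_2$ forces $\langle f,u_1\rangle=\langle f,u_2\rangle$; equivalently, that $f$ annihilates $\ker P$ restricted to $C^\infty_\diamond(SM)$. Lemma~\ref{P} handles this because the estimate $\|u\|_{H^1}\le C\|Pu\|$ shows $P$ is \emph{injective} on $C^\infty_\diamond(SM)$, so $Pu_1=Pu_2$ already forces $u_1=u_2$ there and the functional is trivially well defined. Thus the coercive estimate does double duty: it guarantees both injectivity (well-definedness of $\ell$) and boundedness of $\ell$. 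The only genuinely delicate point is the careful bookkeeping of the zeroth Fourier mode --- one must ensure that solving the equation on the $\diamond$-subspace recovers the full distributional identity $P^*h=f$ on all of $C^\infty(SM)$, using that testing against constants is controlled by the compatibility condition $\langle f,1\rangle=0$ together with the structure of $P^*=(\bfG_E+V(\lambda))V$, whose factor $V$ kills constants.
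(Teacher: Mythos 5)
Your proposal is correct and follows essentially the same route as the paper: define the functional $L(Pu)=\<u,f\>$ on $PC^\infty_\diamond(SM)$, use Lemma~\ref{P} for well-definedness and boundedness, extend by Hahn--Banach, represent by Riesz, and close the constant mode using $\<f,1\>=0$. One tiny slip: $P1=V\bfG_E 1=0$ (not $V\lambda$), since $\bfG_E$ annihilates constants, but this only makes the compatibility bookkeeping easier and does not affect the argument.
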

\begin{proof}
Consider the subspace $PC^\infty_\diamond(SM)$ of $L^2(SM)$. By Lemma~\ref{P}, any element $w$ of $PC^\infty_\diamond(SM)$ has the form $w=Pu$ for some $u\in C^\infty_\diamond(SM)$. For a given $f\in H^{-1}_\diamond(SM)$, consider the linear functional
$$
L:PC^\infty_\diamond(SM)\to\C,\quad L(Pu)=\<u,f\>.
$$
Lemma~\ref{P} implies that the functional $L$ satisfies
$$
|L(Pu)|\le \|f\|_{H^{-1}(SM)}\,\|u\|_{H^1(SM)}\le C\|f\|_{H^{-1}(SM)}\, \|Pu\|.
$$
This says that $L$ is continuous on $PC^\infty_\diamond(SM)$. Therefore, by Hahn-Banach Theorem, the operator $L$ has a continuous extension
$$
\mathcal L:L^2(SM)\to \C,\quad |\mathcal L(v)|\le C\|f\|_{H^{-1}(SM)}\,\|v\|.
$$
Now, we apply the Riesz Representation Theorem to find $h\in L^2(SM)$ satisfying
$$
\mathcal L(v)=(v,h),\quad \|h\|\le C\|f\|_{H^{-1}(SM)}.
$$
If $u\in C^\infty_\diamond(SM)$, we have
$$
\<u,P^*h\>=\<Pu,h\>=L(Pu)=\<u,f\>.
$$
It follows that $P^*h=f$, since $f$ is orthogonal to constants.
\end{proof}

Now, we are ready to prove the surjectivity of $I^*_0$. Actually Theorem~\ref{th4} is a particular case of the next result (let $a=0$).
\begin{Theorem}\label{surjectivity of I*_0 for generalized thermostats}
Let $(M,g,E)$ be an Anosov Gaussian thermostat on a closed oriented Riemannian surface. Given $a\in H^{-1}_\diamond(SM)$ and $f\in L^2(M)$, there exists $w\in H^{-1}(SM)$ with $(\bfG_E+V(\lambda))w = a$ and $w_0 = f$.
\end{Theorem}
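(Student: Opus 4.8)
The plan is to reduce Theorem~\ref{surjectivity of I*_0 for generalized thermostats} to the solvability result for $P^*$ already established in Lemma~\ref{Surjectivity of P*}. Recall $P=V\bfG_E$ and $P^*=(\bfG_E+V(\lambda))V$. First I would look for $w$ in the form $w=Vh$ for a suitable $h\in L^2(SM)$, since then $(\bfG_E+V(\lambda))w=(\bfG_E+V(\lambda))Vh=P^*h$, which matches the first condition $(\bfG_E+V(\lambda))w=a$ precisely when $P^*h=a$. Given the hypothesis $a\in H^{-1}_\diamond(SM)$, Lemma~\ref{Surjectivity of P*} produces such an $h\in L^2(SM)$ with the norm bound $\|h\|\le C\|a\|_{H^{-1}(SM)}$. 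This handles the equation $(\bfG_E+V(\lambda))w=a$, but the ansatz $w=Vh$ has a defect: its zeroth Fourier mode vanishes, since $(Vh)_0 = V(h_0)=0$ (the operator $V$ maps $H_k$ to $H_k$ and kills $H_0$). So this piece alone cannot achieve $w_0=f$.

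The natural fix is to correct by adding a term supported in the zeroth mode. I would set $w = Vh + w'$ where $w'$ is chosen so that $w'_0=f$ and $w'$ does not disturb the equation too badly, or better, fold $f$ into the construction from the start. The cleaner route is to first solve the equation with the right-hand side $a$ adjusted so that the resulting solution automatically has the prescribed zeroth mode. Concretely, I would seek $w$ satisfying $(\bfG_E+V(\lambda))w=a$ and then analyze what freedom remains. The kernel of $(\bfG_E+V(\lambda))$ acting on $H^{-1}(SM)$ is exactly $\cD'_{\rm inv}(SM)$, the invariant distributions; by Theorem~\ref{th4} (the $a=0$ case we are generalizing, or rather its proof) one knows invariant distributions with prescribed zeroth mode exist. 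So the strategy is: take any particular solution $w_{\rm part}=Vh$ of $(\bfG_E+V(\lambda))w=a$, compute its zeroth mode $(w_{\rm part})_0=0$, and then add an invariant distribution $w_{\rm inv}$ with $(w_{\rm inv})_0=f$ to land on the correct zeroth mode while preserving the equation. This requires the $a=0$ case as an input, so I would prove that case first and then deduce the general statement.

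To prove the $a=0$ case (prescribing $w_0=f$ with $(\bfG_E+V(\lambda))w=0$), I would instead apply Lemma~\ref{Surjectivity of P*} with a carefully chosen right-hand side. Since $f\in L^2(M)\subset L^2(SM)$ lies entirely in the zeroth mode, one wants an invariant distribution whose zeroth mode is $f$. Write $w=f+\tilde w$ where $\tilde w=\sum_{k\ne0}w_k$ collects the nonzero modes; then $(\bfG_E+V(\lambda))w=0$ becomes an equation determining $\tilde w$ in terms of $f$, and projecting onto modes allows one to set up $P^*h$ with right-hand side built from $(\bfG_E+V(\lambda))f$, which lies in $H^{-1}_\diamond(SM)$ because applying $\bfG_E=X+\lambda V$ to a zeroth-mode object produces only modes $\pm1$ (hence mean zero). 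Solving $P^*h=-(\bfG_E+V(\lambda))f$ and setting $w=f+Vh$ should give $(\bfG_E+V(\lambda))w=(\bfG_E+V(\lambda))f+P^*h=0$ while $w_0=f_0+(Vh)_0=f$.

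The main obstacle I anticipate is the bookkeeping of Fourier modes together with the regularity: one must verify that $(\bfG_E+V(\lambda))f$ genuinely lands in $H^{-1}_\diamond(SM)$ (the $\diamond$, i.e.\ orthogonality to constants, is essential for invoking Lemma~\ref{Surjectivity of P*}), and that the resulting $w=f+Vh$ lies in $H^{-1}(SM)$ with $V h\in H^{-1}$ following from $h\in L^2$. The mean-zero condition should follow from the divergence structure, since pairing $(\bfG_E+V(\lambda))f$ against the constant $1$ gives $-(f,(\bfG_E+V(\lambda))^*1)=-(f,\bfG_E 1)=0$ by Lemma~\ref{lemma1}. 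Once these verifications are in place the theorem follows, and the original $w=Vh$ ansatz handles the inhomogeneous term $a$ by linearity: superpose the solution of $P^*h=a$ with the invariant distribution carrying the prescribed zeroth mode.
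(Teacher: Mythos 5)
Your proposal is correct and follows essentially the same route as the paper: the paper applies Lemma~\ref{Surjectivity of P*} once to the right-hand side $a-(\bfG_E+V(\lambda))f$ (after checking it lies in $H^{-1}_\diamond(SM)$, exactly as you do) and sets $w=Vh+f$, which is just the superposition of your two steps combined by linearity of $P^*$. The only quibble is a harmless sign slip in your pairing computation $\<(\bfG_E+V(\lambda))f,1\>=\<f,(\bfG_E+V(\lambda))^*1\>=-\<f,\bfG_E 1\>=0$; the conclusion is unaffected.
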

\begin{proof}
For a given $f\in C^\infty(M)$, by Lemma~\ref{Surjectivity of P*}, there is $h\in L^2(SM)$ satisfying
$$
P^*h=a-(\bfG_E+V(\lambda))f\quad \text{in}\quad SM.
$$
Setting $w:=Vh+f$, we get
\begin{equation*}
\begin{split}
(\bfG_E+V(\lambda))w&=(\bfG_E+V(\lambda))Vh+(\bfG_E+V(\lambda))f\\
&=P^*h+(\bfG_E+V(\lambda))f=a
\end{split}
\end{equation*}
and it is easy to see that $w_0=f$.
\end{proof}



\section{Surjectivity of $I_1^*$}
Let $(M,g,E)$ be a Gaussian thermostat on a compact oriented surface. Consider the following first order differential operators introduced by Guillemin and Kazhdan \cite{GK}
$$
\eta_+=\frac{1}{2}(X+iX_\perp),\quad \eta_-=\frac{1}{2}(X-iX_\perp).
$$
It was shown that $\eta_\pm:\Omega_k\to \Omega_{k\pm1}$ for $k\in\Z$, and that these operators are elliptic. We introduce the following differential operators $\mu_{\pm}:\Omega_k\to \Omega_{k\pm1}$ for $k\in\Z$, corresponding to the Gaussian thermostat $(M,g,E)$, given by
\begin{equation}\label{def-mu}
\mu_+=\eta_++\lambda_1 V,\quad \mu_-=\eta_-+\lambda_{-1}V,
\end{equation}
where $\lambda=\lambda_1+\lambda_{-1}$ (notice that $\lambda$ corresponds to a $1$-form). Thus $\mu_++\mu_-=\bfG_E=X+\lambda V$.


For fixed $m\ge 1$, we define the projection operator $T_m:C^\infty(SM)\to \bigoplus_{|k|\ge m+1}\Omega_k$ by
$$
T_mu=\sum_{|k|\ge m+1}u_k.
$$
We also consider the operator $Q_m:C^\infty(SM)\to \bigoplus_{|k|\ge m+1}\Omega_k$ defined by $Q_mu:=T_mV\bfG_E u$.

The next proposition will be the key ingredient for the proofs of the main results.
\begin{Proposition}\label{prop1}
Let $(M,g,E)$ be an $\alpha$-controlled Gaussian thermostat on a closed oriented Riemannian surface, and let $m\ge1$ be an integer. Then for any given $u\in \bigoplus_{|k|\ge m}\Omega_k$ the following holds
\begin{multline*}
\|Q_mu\|^2\ge (1-(m-1)^2+\alpha m^2)(\|\mu_-u_m\|^2+\|\mu_+u_{-m}\|^2)\\
+(1-m^2+\alpha(m+1)^2)(\|\mu_-u_{m+1}\|^2+\|\mu_+u_{-m-1}\|^2)+\|v\|^2+\alpha\|w\|^2,
\end{multline*}
where $v=T_m\mathbf G_E u$ and $w=T_m\mathbf G_E Vu$.
\end{Proposition}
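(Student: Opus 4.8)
The plan is to reduce everything to a single inequality coming from the Pestov identity together with the $\alpha$-control hypothesis, and then to carry out an exact Fourier-mode computation. First I would apply the defining $\alpha$-controlled inequality to $Vu$ (which is again in $C^\infty(SM)$) and combine it with the Pestov identity of Theorem~\ref{pestov}. Since $\|\bfG_E Vu\|^2-(\K Vu,Vu)\ge \alpha\|\bfG_E Vu\|^2$, while Pestov rewrites the left-hand side as $\|V\bfG_E u\|^2-\|\bfG_E u\|^2$, I obtain the clean estimate
\[
\|V\bfG_E u\|^2\ge \|\bfG_E u\|^2+\alpha\|\bfG_E Vu\|^2,
\]
valid for every $u\in C^\infty(SM)$; this is exactly the step already used in \eqref{temp-surj}. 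The striking feature of the proof is that this is the \emph{only} inequality needed: the rest is a sequence of exact identities obtained by expanding $\|V\bfG_E u\|^2$, $\|\bfG_E u\|^2$ and $\|\bfG_E Vu\|^2$ into Fourier modes and collecting terms.

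For the bookkeeping I would use that $V$ acts as multiplication by $ik$ on $\Omega_k$ and that $\mu_\pm$ map $\Omega_k$ to $\Omega_{k\pm1}$ with $\mu_++\mu_-=\bfG_E$, so that the $n$-th Fourier component is $(\bfG_E u)_n=\mu_+u_{n-1}+\mu_-u_{n+1}$, while $(V\bfG_E u)_n=in(\bfG_E u)_n$ and $(\bfG_E Vu)_n=i(n-1)\mu_+u_{n-1}+i(n+1)\mu_-u_{n+1}$. The essential input is the support hypothesis $u\in\bigoplus_{|k|\ge m}\Omega_k$, i.e.\ $u_k=0$ for $|k|<m$: it forces the modes of $\bfG_E u$ with $|n|\le m-2$ to vanish, and in each of the four modes $n=\pm(m-1)$ and $n=\pm m$ exactly one of the two summands $\mu_+u_{n-1}$, $\mu_-u_{n+1}$ survives, so that no cross term $\langle\mu_+u_{n-1},\mu_-u_{n+1}\rangle$ enters the extracted low modes.

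Writing $P_1:=\|\mu_-u_m\|^2+\|\mu_+u_{-m}\|^2$ and $P_2:=\|\mu_-u_{m+1}\|^2+\|\mu_+u_{-m-1}\|^2$, orthogonality of the spaces $\Omega_n$ then yields the exact identities
\[
\|V\bfG_E u\|^2=\|Q_mu\|^2+(m-1)^2P_1+m^2P_2,
\]
\[
\|\bfG_E u\|^2=\|v\|^2+P_1+P_2,\qquad \|\bfG_E Vu\|^2=\|w\|^2+m^2P_1+(m+1)^2P_2,
\]
where $v=T_m\bfG_E u$ and $w=T_m\bfG_E Vu$ absorb all the modes $|n|\ge m+1$. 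Substituting the second line into the master inequality and then using the first identity to eliminate $\|V\bfG_E u\|^2$ gives
\[
\|Q_mu\|^2\ge \|v\|^2+\alpha\|w\|^2+\bigl(1-(m-1)^2+\alpha m^2\bigr)P_1+\bigl(1-m^2+\alpha(m+1)^2\bigr)P_2,
\]
which is precisely the asserted bound once $P_1,P_2$ are unfolded.

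I expect the only genuine obstacle to be the index bookkeeping near the cutoff. One must verify carefully that the surviving low modes of $V\bfG_E u$ carry the weights $(m-1)^2$ and $m^2$ on $P_1$ and $P_2$, whereas those of $\bfG_E Vu$ carry $m^2$ and $(m+1)^2$: this asymmetry, stemming from the fact that $(V\bfG_E u)_n$ is weighted by $n$ while $(\bfG_E Vu)_n$ weights the $\mu_+$ and $\mu_-$ pieces by the shifted factors $n-1$ and $n+1$, is exactly what produces the different coefficients of $P_1$ and $P_2$ in the final estimate. Getting these shifts right, and confirming that the four boundary modes each reduce to a single $\mu_\pm$ term, is where all the care is required; everything else is orthogonal bookkeeping.
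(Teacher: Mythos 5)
Your argument is precisely the paper's proof: the same master inequality $\|V\bfG_E u\|^2\ge\|\bfG_E u\|^2+\alpha\|\bfG_E Vu\|^2$ obtained by applying the $\alpha$-control hypothesis to $Vu$ and invoking the Pestov identity, followed by the same three orthogonal Fourier-mode expansions of $\|\bfG_E u\|^2$, $\|\bfG_E Vu\|^2$ and $\|V\bfG_E u\|^2$ with the weights $1$, $(m,m+1)^2$ and $(m-1,m)^2$ on the boundary modes, and the same substitution. The only caveat (which the paper's own proof shares) concerns $m=1$: there the modes $n=\pm(m-1)$ coincide at $n=0$, where \emph{both} summands $\mu_+u_{-1}$ and $\mu_-u_1$ of $(\bfG_E u)_0$ survive, so a cross term $2\mathfrak{Re}(\mu_+u_{-1},\mu_-u_1)$ enters $\|\bfG_E u\|^2$ and (with opposite sign) $\|\bfG_E Vu\|^2$, and your assertion that the four boundary modes each reduce to a single $\mu_\pm$ term is valid only for $m\ge 2$.
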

\begin{proof}
Let $u\in \bigoplus_{|k|\ge m}\Omega_k$. Since $\mathbf G_E=\mu_++\mu_-$, 
\begin{multline*}
\|\mathbf G_E u\|^2=\|\mu_-u_{m+1}\|^2+\|\mu_-u_m\|^2+\|\mu_+u_{-m-1}\|^2+\|\mu_+u_{-m}\|^2+\|v\|^2.
\end{multline*}
Similarly
\begin{multline*}
\|\mathbf G_E Vu\|^2=(m+1)^2\|\mu_-u_{m+1}\|^2+m^2\|\mu_-u_m\|^2+(m+1)^2\|\mu_+u_{-m-1}\|^2\\
+m^2\|\mu_+u_{-m}\|^2+\|w\|^2.
\end{multline*}
Since $V\mathbf G_E u=\sum_{|k|\le m}ik(\mathbf G_E u)_k+Q_mu$, we have
\begin{multline*}
\|V\mathbf G_E u\|^2=m^2\|\mu_-u_{m+1}\|^2+(m-1)^2\|\mu_-u_m\|^2+m^2\|\mu_+u_{-m-1}\|^2\\
+(m-1)^2\|\mu_+u_{-m}\|^2+\|Q_mu\|^2.
\end{multline*}
By the Pestov identity and the hypostheses, we get
$$
\|V\mathbf G_Eu\|^2\ge \alpha\|\mathbf G_EVu\|^2+\|\mathbf G_Eu\|^2.
$$
Making the appropriate substitutions we obtain our result.
\end{proof}

\begin{Lemma}\label{temp-surj-I*1}
Let $(M,g,E)$ be an Anosov Gaussian thermostat. Suppose that there is a constant $C>0$ such that
$$
\|\mathbf G_Eu\|\le C\|Q_mu\|
$$
for all $u\in\bigoplus_{|k|\ge m}\Omega_k$. Then there exists another constant $D>0$ such that
$$
\|u\|_{H^1(SM)}\le D\|Q_mu\|
$$
for all $u\in\bigoplus_{|k|\ge m}\Omega_k$.
\end{Lemma}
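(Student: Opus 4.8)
The cleanest route is to reduce to Lemma~\ref{P}, which already converts control of $P=V\bfG_E$ into control of the full $H^1(SM)$-norm. The plan is therefore threefold: (i) check that the subspace $\bigoplus_{|k|\ge m}\Omega_k$ sits inside $C^\infty_\diamond(SM)$, so that Lemma~\ref{P} applies; (ii) compare the untruncated operator $P$ with its truncation $Q_m=T_mP$, using the hypothesis to absorb the discarded low Fourier modes; and (iii) chain the two estimates.

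For step (i) I would note that any $u\in\bigoplus_{|k|\ge m}\Omega_k$ has vanishing zeroth Fourier mode, $u_0=0$, and since $1\in H_0$ is orthogonal to every $H_k$ with $k\neq0$, this gives $\<u,1\>=0$; thus $u\in C^\infty_\diamond(SM)$ and Lemma~\ref{P} yields a constant $C_0>0$ with $\|u\|_{H^1(SM)}\le C_0\|Pu\|=C_0\|V\bfG_E u\|$.

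Step (ii) is the only place the hypothesis enters, and it is the crux of the argument. Since $V$ acts on $H_k$ as multiplication by $ik$, one has $V\bfG_E u=\sum_{|k|\le m}ik(\bfG_E u)_k+Q_mu$, and by orthogonality of the Fourier modes $\|V\bfG_E u\|^2=\|Q_mu\|^2+\sum_{|k|\le m}k^2\|(\bfG_E u)_k\|^2$. The finite sum is at most $m^2\|\bfG_E u\|^2$, so the hypothesis $\|\bfG_E u\|\le C\|Q_mu\|$ gives $\|V\bfG_E u\|^2\le(1+m^2C^2)\|Q_mu\|^2$. The key point is that truncating $P$ to $Q_m$ discards exactly the modes $|k|\le m$ of $V\bfG_E u$, and the hypothesis is precisely the quantitative statement that these discarded modes are dominated by what remains.

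Combining (i) and (ii) gives $\|u\|_{H^1(SM)}\le C_0\sqrt{1+m^2C^2}\,\|Q_mu\|$, so $D:=C_0\sqrt{1+m^2C^2}$ works. I expect no real obstacle beyond step (ii); should one prefer a self-contained proof avoiding the citation to Lemma~\ref{P}, the same $D$ can be recovered by repeating that lemma's argument verbatim: the Pestov identity (Theorem~\ref{pestov}) together with the $\alpha$-control from Theorem~\ref{existence of alpha-control} gives $\|V\bfG_E u\|^2\ge\|\bfG_E u\|^2+\alpha(\|\bfG_E Vu\|^2+\|Vu\|^2)$ as in \eqref{temp-surj}, after which the commutation relation $[\bfG_E,V]u=X_\perp u-V(\lambda)Vu$ and the spectral bound $\|u\|^2\le m^{-2}\|Vu\|^2$ (valid since all modes satisfy $|k|\ge m\ge1$) control the three remaining terms of the $H^1$-norm.
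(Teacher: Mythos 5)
Your argument is correct and is essentially the paper's own proof: both reduce to Lemma~\ref{P} via the orthogonal decomposition $\|Pu\|^2=\sum_{|k|\le m}k^2\|(\bfG_E u)_k\|^2+\|Q_mu\|^2$ and absorb the low modes using the hypothesis $\|\bfG_E u\|\le C\|Q_mu\|$. Your step (i), verifying that $u\in\bigoplus_{|k|\ge m}\Omega_k$ lies in $C^\infty_\diamond(SM)$ because $u_0=0$, is a detail the paper leaves implicit but is needed to invoke Lemma~\ref{P}.
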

\begin{proof}
Let $u\in\bigoplus_{|k|\ge m}\Omega_k$. By the definitions of $T_m$ and $Q_m$ we have
$$
\|Pu\|^2=\sum_{|k|\leq m}k^2\|(\mathbf G_Eu)_k\|^2+\|Q_mu\|^2\le C_1\|\mathbf G_Eu\|^2+\|Q_mu\|^2
$$
for some constant $C_1>0$. The hypothesis guarantees the existence of a constant $C_2>0$ such that
$$
\|Pu\|\le C_2\|Q_mu\|
$$
for any $u\in\bigoplus_{|k|\ge m}\Omega_k$. Now we apply Lemma~\ref{P} to finish the proof.
\end{proof}

\begin{Lemma}\label{Corollary}
Let $(M,g,E)$ be an Anosov Gaussian thermostat which is $\alpha$-controlled, for some $\alpha>(m-1)/(m+1)$
then there is a constant $C>0$ such that
$$
\|u\|_{H^1(SM)}\le C\|Q_mu\|
$$
holds for any $u\in\bigoplus_{|k|\geq m}\Omega_k$.
\end{Lemma}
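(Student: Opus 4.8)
The plan is to run the two-step reduction that the preceding lemmas set up. By Lemma~\ref{temp-surj-I*1} it suffices to produce a constant $C>0$ with $\|\bfG_E u\|\le C\|Q_m u\|$ for all $u\in\bigoplus_{|k|\ge m}\Omega_k$; once this coercivity estimate for $\bfG_E$ is in hand, the full $H^1(SM)$-bound follows immediately, with the Anosov hypothesis entering only through Lemma~\ref{P}. So the entire statement comes down to extracting the bound $\|\bfG_E u\|\le C\|Q_m u\|$ out of Proposition~\ref{prop1}.

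To do that, I would first recall from the proof of Proposition~\ref{prop1} the exact expansion
\[
\|\bfG_E u\|^2=\|\mu_-u_{m+1}\|^2+\|\mu_-u_m\|^2+\|\mu_+u_{-m-1}\|^2+\|\mu_+u_{-m}\|^2+\|v\|^2,
\]
with $v=T_m\bfG_E u$. The key observation is that $\|\bfG_E u\|^2$ is precisely the \emph{unweighted} sum of the very same five nonnegative quantities that appear, now carrying coefficients, on the right-hand side of Proposition~\ref{prop1}. Writing $c_1:=1-(m-1)^2+\alpha m^2$ for the coefficient of $\|\mu_-u_m\|^2+\|\mu_+u_{-m}\|^2$ and $c_2:=1-m^2+\alpha(m+1)^2$ for the coefficient of $\|\mu_-u_{m+1}\|^2+\|\mu_+u_{-m-1}\|^2$, and discarding the nonnegative term $\alpha\|w\|^2$, Proposition~\ref{prop1} yields $\|Q_m u\|^2\ge\min(c_1,c_2,1)\,\|\bfG_E u\|^2$, provided $c_1,c_2>0$.

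The only place the hypothesis $\alpha>(m-1)/(m+1)$ enters is in checking that these coefficients are strictly positive. For $c_2$ one has $c_2>0$ exactly when $\alpha>(m^2-1)/(m+1)^2=(m-1)/(m+1)$, which is the assumption; this is the binding constraint, coming from the top Fourier modes $u_{\pm(m+1)}$. For $c_1$ one needs only $\alpha>(m-2)/m$, and since $(m-2)/m<(m-1)/(m+1)$ this holds automatically. Hence $\kappa:=\min(c_1,c_2,1)>0$ and $\|\bfG_E u\|\le\kappa^{-1/2}\|Q_m u\|$, which is precisely the input required by Lemma~\ref{temp-surj-I*1}; feeding it in completes the argument.

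I expect no serious obstacle once Proposition~\ref{prop1} is available: the argument is essentially a bookkeeping comparison of coefficients term by term. The one point demanding care is recognizing that the control threshold $\alpha=(m-1)/(m+1)$ is dictated entirely by the $(m+1)$-st harmonics, so that $c_2\downarrow 0$ and the estimate degenerates exactly as $\alpha\downarrow(m-1)/(m+1)$. This also explains, via Remark~\ref{no b conj impies (b-1)/b-controlled}, why the numerical threshold $\beta_{\rm ter}\ge(m+1)/2$ is the natural hypothesis governing the $s$-injectivity of $I_m$.
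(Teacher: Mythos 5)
Your proposal is correct and follows exactly the paper's own two-step argument: extract $\|\bfG_E u\|\le C\|Q_mu\|$ from Proposition~\ref{prop1} (where the paper leaves the coefficient check implicit and you spell out that $c_2=1-m^2+\alpha(m+1)^2>0$ is the binding constraint, equivalent to $\alpha>(m-1)/(m+1)$), and then invoke Lemma~\ref{temp-surj-I*1}. The only difference is that you fill in the bookkeeping the paper omits; the approach is the same.
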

\begin{proof}
By Proposition \ref{prop1}, for $\alpha>(m-1)/(m+1)$, there is a constant $C>0$ satisfying  
\begin{equation}
\|Q_mu\|\geq C\|\bfG_E u\|.
\end{equation}
Now, one can conclude the proof by applying Lemma \ref{temp-surj-I*1}.
\end{proof}

\begin{Remark}
\rm As an immediate corollary of Lemma \ref{Corollary} and the smooth Livsic theorem \cite{LMM}, one obtains that on an Anosov Gaussian thermostat which is $\alpha$-controlled for $\alpha>(m-1)/(m+1)$, $I_m$ is s-injective. In particular, an Anosov Gaussian thermostat with non-positive thermostat curvature is $1$-controlled, this is enough for proving Corollary \ref{cor1}. 

However, in section 7 we will prove Theorem \ref{th1} which is a stronger version of the injectivity of $I_m$, namely $\alpha=(m-1)/(m+1)$.
\end{Remark}

\begin{Lemma}\label{Surjectivity for Q*}
Let $(M,g,E)$ be an Anosov Gaussian thermostat which is $\alpha$-controlled for some $\alpha>(m-1)/(m+1)$. For any $f\in H^{-1}(SM)$ with $f_k=0$ for $|k|\le m-1$, there is $h\in L^2(SM)$ such that
$$
Q_m^* h=f\quad\text{in}\quad SM.
$$
Moreover, $\|h\|\le C \|f\|_{H^{-1}(SM)}$ with $C>0$ being independent of $f$.
\end{Lemma}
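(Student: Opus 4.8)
The plan is to follow the proof of Lemma~\ref{Surjectivity of P*} almost verbatim, with the operator $P$ replaced by $Q_m$ and the a priori estimate of Lemma~\ref{P} replaced by the one of Lemma~\ref{Corollary}. First I would restrict the domain of $Q_m$ to the subspace $\bigoplus_{|k|\ge m}\Omega_k$, on which Lemma~\ref{Corollary} (here the hypothesis $\alpha>(m-1)/(m+1)$ is used) provides a constant $C>0$ with $\|u\|_{H^1(SM)}\le C\|Q_mu\|$. Viewing the image $Q_m\big(\bigoplus_{|k|\ge m}\Omega_k\big)$ as a subspace of $L^2(SM)$, I would define on it the linear functional $L(Q_mu):=\<u,f\>$.

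Next I would check that $L$ is well defined and continuous. Well-definedness is immediate from the estimate: if $Q_mu=Q_mu'$ then $\|u-u'\|_{H^1(SM)}\le C\|Q_m(u-u')\|=0$, so $u=u'$ and the value $\<u,f\>$ is unambiguous; equivalently, $Q_m$ is injective on this subspace. Continuity with respect to the $L^2$ norm follows by combining the $H^{-1}$--$H^1$ duality with the same estimate, namely $|L(Q_mu)|=|\<u,f\>|\le\|f\|_{H^{-1}(SM)}\,\|u\|_{H^1(SM)}\le C\|f\|_{H^{-1}(SM)}\,\|Q_mu\|$.

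I would then extend $L$ to a continuous functional $\mathcal L$ on all of $L^2(SM)$ by the Hahn--Banach theorem, keeping the bound $|\mathcal L(v)|\le C\|f\|_{H^{-1}(SM)}\,\|v\|$, and represent it by some $h\in L^2(SM)$ via the Riesz representation theorem, so that $\mathcal L(v)=(v,h)$ and $\|h\|\le C\|f\|_{H^{-1}(SM)}$; this already yields the asserted norm bound. For every $u\in\bigoplus_{|k|\ge m}\Omega_k$ this gives $\<u,Q_m^*h\>=\<Q_mu,h\>=\mathcal L(Q_mu)=L(Q_mu)=\<u,f\>$.

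The one point that needs care---and the only place where the statement differs genuinely from the $P$ case---is the passage from this identity, valid only for test functions supported on frequencies $|k|\ge m$, to the equality $Q_m^*h=f$ of distributions. The resolution is a Fourier-support bookkeeping: both sides have vanishing Fourier modes for $|k|\le m-1$. For $f$ this is the hypothesis, while $Q_m^*h=(\bfG_E+V(\lambda))VT_mh$ is supported on $|k|\ge m$ because $VT_mh$ has modes only in $|k|\ge m+1$ and $\bfG_E+V(\lambda)$ shifts Fourier modes by at most one (recall $\bfG_E=\mu_++\mu_-$ with $\mu_\pm:\Omega_k\to\Omega_{k\pm1}$, and $V(\lambda)\in\Omega_1\oplus\Omega_{-1}$ acts as a multiplier). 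Hence $Q_m^*h-f$ pairs to zero against any smooth function all of whose modes lie in $|k|\le m-1$, so testing against $\bigoplus_{|k|\ge m}\Omega_k$ alone determines it, and we conclude $Q_m^*h=f$. I expect this Fourier-support verification to be the main (if modest) obstacle; the remainder is exactly the functional-analytic template of Lemma~\ref{Surjectivity of P*}.
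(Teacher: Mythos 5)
Your proposal is correct and follows essentially the same route as the paper: restrict to $\bigoplus_{|k|\ge m}\Omega_k$, use the estimate of Lemma~\ref{Corollary} to make $L(Q_mu)=\<u,f\>$ well defined and bounded, then apply Hahn--Banach and Riesz representation. The only (cosmetic) difference is in the last step: where you check the Fourier supports of $Q_m^*h$ and $f$ separately, the paper inserts the identity $Q_mu=Q_m\bigl(u-\sum_{|k|\le m-1}u_k\bigr)$ into the chain $\<u,Q_m^*h\>=\<Q_mu,h\>=\cdots=\<u,f\>$ for arbitrary smooth $u$, invoking $f_k=0$ for $|k|\le m-1$ at the end --- the two verifications amount to the same bookkeeping.
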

\begin{proof}
Consider the subspace $Q_m\bigoplus_{|k|\ge m}\Omega_k$ of $L^2(SM)$. By Lemma~\ref{P}, any element $v$ of $Q_m\bigoplus_{|k|\ge m}\Omega_k$ has the form $v=Q_mu$ for some $u\in \bigoplus_{|k|\ge m}\Omega_k$. For a given $f\in H^{-1}_\diamond(SM)$, we consider the linear functional
$$
L:Q_m\bigoplus_{|k|\ge m}\Omega_k\to\C,\quad L(Pu)=\<u,f\>.
$$
Lemma~\ref{Corollary} implies that this functional satisfies
$$
|L(Q_mu)|\le \|f\|_{H^{-1}(SM)}\,\|u\|_{H^1(SM)}\le C\|f\|_{H^{-1}(SM)}\, \|Q_mu\|.
$$
This means that $L$ is continuous on $\bigoplus_{|k|\ge m}\Omega_k$. Therefore, by Hahn-Banach theorem $L$ has a continuous extension
$$
\mathcal L:L^2(SM)\to \C,\quad |\mathcal L(v)|\le C\|f\|_{H^{-1}(SM)}\,\|v\|.
$$
Now, we apply the Riesz representation theorem to find $h\in L^2(SM)$ satisfying
$$
\mathcal L(v)=(v,h),\quad \|h\|\le C\|f\|_{H^{-1}(SM)}.
$$
If $u\in C^\infty(SM)$, we have
\begin{multline*}
\<u,Q_m^*h\>=\<Q_mu,h\>=\<Q_m(u-\sum_{|k|\le m-1}u_k),h\>=L(Q_m(u-\sum_{|k|\le m-1}u_k))\\
=\<u-\sum_{|k|\le m-1}u_k,f\>=\<u,f\>.
\end{multline*}
The last equality holds because $f_k=0$ for all $k$ satisfying $|k|\le m-1$.
\end{proof}

Now, we give the proof of our main result on the surjectivity of $I_1^*$.
\begin{proof}[Proof of Theorem~\ref{th5}]
Set $a:=-(\bfG_E+V(\lambda))\alpha$. Since $\delta\alpha=0$, by \cite{PSU} this is equivalent to $\eta_+ \alpha_{-1}+\eta_- \alpha_1=0$. On the other hand, $(\lambda_1 V+V(\lambda_1))\alpha_{-1}=(\lambda_{-1}V+V(\lambda_{-1}))\alpha_1=0$, which imples $a_0=0$. By Theorem~\ref{existence of alpha-control}, an Anosov thermostat is $\alpha$-controlled for some $\alpha>0$. Therefore, we can apply Lemma~\ref{Surjectivity for Q*} with $m=1$ to find $h\in L^2(SM)$ such that
$$
Q_m^*h=(\bfG_E+V(\lambda))VTh=-(\bfG_E+V(\lambda))\alpha.
$$
Set $w:=VTh+\alpha$, then $(\bfG_E+V(\lambda))w=0$ and $w_{-1}+w_1=\alpha$.
\end{proof}



\section{Injectivity of operators $\mu_+, \mu_-$}

The following result on the injectivity of $\mu_+,\mu_-$ is one of the crucial components in the proof of Theorem~\ref{th1}. It does generalize the corresponding result obtained in \cite{GK}.
\begin{Proposition}\label{injectivity of mu operators}
Let $(M,g,E)$ be a Gaussian thermostat on a closed oriented Riemannian surface of genus $\geq 2$. Consider the operators $\mu_\pm:\Omega_k\to \Omega_{k\pm1}$ defined as in \eqref{def-mu}, then $\mu_+:\Omega_k\to \Omega_{k+1}$ is injective for $k\ge 1$ and $\mu_-:\Omega_k\to \Omega_{k-1}$ is injective for $k\le -1$.
\end{Proposition}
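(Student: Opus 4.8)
The hypotheses here are purely topological (genus $\geq 2$, with no assumption on the curvature, the field, or the dynamics), so the Pestov/energy method used elsewhere in the paper is not the right tool; instead the natural strategy is complex-analytic. The plan is to identify $\ker(\mu_+|_{\Omega_k})$ with the space of holomorphic sections, for a suitable holomorphic structure, of a line bundle of negative degree, and then invoke the classical vanishing of such sections on a closed Riemann surface. I would first recall, following \cite{GK} (see also \cite{PSU}), the identification of the Fourier components with powers of the canonical bundle. Fix the complex structure determined by the conformal class of $g$, let $\kappa$ be the canonical line bundle, so $\deg\kappa=2g-2$, and use the standard identification $\Omega_m\cong C^\infty(M,\kappa^{\otimes m})$ (negative powers meaning powers of $\kappa^{-1}\cong\overline\kappa$). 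Under it, up to a conformal factor, $\eta_-$ becomes a Dolbeault-type operator $\overline\partial$ lowering the Fourier degree, with $\ker(\eta_-|_{\Omega_m})\cong H^0(M,\kappa^{\otimes m})$ for $m\ge 1$ (the holomorphic $m$-differentials, which for $g\ge2$ are nonzero — consistent with $\eta_-$ being injective only for $m\le-1$).

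The key observation is that on $\Omega_k$ one has $V=ik$, so $\mu_+=\eta_++\lambda_1V$ differs from $\eta_+$ only by the zeroth order term $ik\lambda_1$. Passing to the complex conjugate $\overline u$ converts the problem into one for $\eta_-$: since $\eta_\pm$ are conjugate to one another and $\lambda_{-1}=\overline{\lambda_1}$, complex conjugation intertwines $\mu_+$ on $\Omega_k$ with $\mu_-$ on $\Omega_{-k}$, so $\mu_+u=0$ becomes $\mu_-\overline u=0$ with $\overline u\in\Omega_{-k}\cong C^\infty(M,\kappa^{\otimes(-k)})$. Writing this out, $\mu_-\overline u=(\eta_--ik\lambda_{-1})\overline u=0$, and under the identification $\lambda_{-1}\in\Omega_{-1}$ corresponds to a smooth $(0,1)$-form, so the equation reads $(\overline\partial+A)\overline u=0$ with $A=-ik\lambda_{-1}$ a smooth $(0,1)$-form acting on sections of $\kappa^{\otimes(-k)}$. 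Because $M$ has complex dimension one, $(\overline\partial+A)^2=0$ automatically (there are no $(0,2)$-forms), so $\overline\partial+A$ defines a genuine holomorphic structure on the underlying smooth line bundle; crucially this structure has the same degree as the unperturbed one, namely $-k(2g-2)$, since the degree is a topological invariant of the smooth bundle and is unchanged by a zeroth order twist.

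For $k\ge1$ and $g\ge2$ this degree $-k(2g-2)$ is strictly negative, and a nonzero holomorphic section of a line bundle of negative degree over a compact Riemann surface cannot exist: by the similarity principle the zeros of a solution of $(\overline\partial+A)s=0$ are isolated of finite order, and their total count equals the degree, which is negative, a contradiction. Hence $\overline u=0$, i.e. $u=0$, giving injectivity of $\mu_+$ on $\Omega_k$ for $k\ge1$; injectivity of $\mu_-$ on $\Omega_k$ for $k\le-1$ follows from the same intertwining by conjugation. I expect the main obstacle to be the bookkeeping in the identification step: one must check that $\lambda_1V$ really is a correctly-typed zeroth order perturbation of the Dolbeault operator, so that conjugation produces a legitimate smooth $(0,1)$-form twist of a holomorphic structure of the same (negative) degree. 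Granting that reduction, the vanishing is immediate, and the whole difficulty is concentrated in verifying that the thermostat term does not disturb the degree computation.
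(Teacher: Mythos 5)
Your argument is correct, but it takes a genuinely different route from the paper's. You identify $\Omega_{-k}$ with smooth sections of $\kappa^{\otimes(-k)}$, conjugate $\mu_+$ on $\Omega_k$ into $\mu_-=\eta_--ik\lambda_{-1}$ on $\Omega_{-k}$, and observe that the thermostat term is a zeroth order twist by the $(0,1)$-form $-ik\lambda_{-1}$, hence defines a new holomorphic structure on a line bundle of the same degree $-k(2g-2)<0$; vanishing of holomorphic sections of negative-degree bundles then kills the kernel. (All the identifications you flag do check out: $Vu=iku$ on $\Omega_k$, $\overline{\mu_+u}=\mu_-\overline u$ since $\lambda$ is real, and $\lambda_{-1}\in\Omega_{-1}$ is indeed a $(0,1)$-form, so the twist is correctly typed and does not change the degree.) The paper instead stays within the energy method: it first shows (Lemma~\ref{invariant}) that $\ker\mu_+$ is preserved under the conformal change $(g,E)\mapsto(e^{2\sigma}g,e^{-2\sigma}E)$ with $u\mapsto e^{m\sigma}u$, then uses Gauss--Bonnet and $\chi(M)<0$ to choose $\sigma$ making the thermostat curvature $\mathbb{K}=K-\div_gE$ strictly negative (Lemmas~\ref{divergence} and~\ref{negative curvature}), and finally applies the Pestov identity to $u\in\Omega_k$, obtaining $2k\|\mu_-u\|^2=2k\|\mu_+u\|^2+k^2(\mathbb{K}u,u)$, which forces $u=0$ when $\mu_+u=0$, $k\ge1$ and $\mathbb{K}<0$. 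Both proofs use genus $\ge 2$ at exactly one point: you through $\deg\kappa=2g-2>0$, the paper through the solvability of $\Delta_g\sigma=K-\div_gE-2\pi\chi(M)/\Vol_g(M)$. Your approach is more structural --- it exhibits the kernel as $H^0$ of a negative line bundle, so Riemann--Roch would even compute its dimension in the non-injective range, and it bypasses the conformal normalization entirely. The paper's approach is more elementary (only the Pestov identity plus one linear elliptic equation) and its conformal-invariance lemma transfers directly to the boundary case in Section~8, where $M$ is embedded in a closed surface of genus $\ge 2$. One small quibble: your opening claim that the Pestov/energy method ``is not the right tool'' here is belied by the paper, which makes that method work after the conformal normalization.
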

This is a consequence of the following lemmas. The first lemma says that the kernel of $\mu_\pm$ is invariant under the conformal change of the metric and the Gaussian thermostat: $(g,E)\mapsto (e^{2\sigma}g,e^{-2\sigma}E)$.
\begin{Lemma}\label{invariant}
Let $(M,g,E)$ be a Gaussian thermostat on an oriented surface, and let $u\in\Omega_m$ be such that $\mu_+u=0$. Then $\tilde u=e^{m\sigma}u$ satisfies $\tilde \mu_+\tilde u=0$ for any smooth function $\sigma\in C^\infty(M,\R)$. Here $\tilde \mu_+$ denotes the operator defined as in \eqref{def-mu} for the Gaussian thermostat $(M,\tilde g,\tilde E)$ with $\tilde g= e^{2\sigma}g$ and $\tilde E=e^{-2\sigma}E$.
\end{Lemma}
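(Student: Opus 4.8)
The plan is to prove the sharper transformation law
$$
\tilde\mu_+\tilde u=e^{(m-1)\sigma}\,\mu_+ u\qquad\text{for all }u\in\Omega_m,
$$
from which the statement is immediate: if $\mu_+u=0$ then $\tilde\mu_+\tilde u=e^{(m-1)\sigma}\cdot 0=0$. To establish the identity I would work in isothermal coordinates $(x,y)$ in which $g=e^{2\rho}(dx^2+dy^2)$, so that $\tilde g=e^{2\tilde\rho}(dx^2+dy^2)$ with $\tilde\rho=\rho+\sigma$. These induce coordinates $(x,y,\varphi)$ on $SM$, and the vertical field $V=\partial_\varphi$ is unchanged by the conformal scaling, since rescaling a vector to unit length does not change its direction; thus $\Omega_m$ and the identification of $SM$ with $S_{\tilde g}M$ are both preserved.

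First I would record the explicit form of the raising operator. Computing $X$ and $X_\perp=[V,X]$ in these coordinates, one finds that for $u=a(x,y)e^{im\varphi}\in\Omega_m$,
$$
\eta_+u=\tfrac12 e^{-\rho}e^{i(m+1)\varphi}\big[(a_x-ia_y)-m(\rho_x-i\rho_y)a\big],
$$
and analogously for $\tilde\eta_+$ with $\rho$ replaced by $\tilde\rho$. Substituting $\tilde u=e^{m\sigma}u$, i.e. $\tilde a=e^{m\sigma}a$, into the tilde version and expanding, the terms involving the first derivatives of $\sigma$ cancel exactly — this cancellation is precisely what forces the weight $e^{m\sigma}$ — leaving $\tilde\eta_+\tilde u=e^{(m-1)\sigma}\eta_+u$. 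As a check, for $m=0$ this recovers the familiar scaling $\tilde\eta_+=e^{-\sigma}\eta_+$ on functions.

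Next I would treat the potential term. Writing $E=E^1\partial_x+E^2\partial_y$ and $v=e^{-\rho}(\cos\varphi\,\partial_x+\sin\varphi\,\partial_y)$, a short calculation gives $\lambda=\langle E,iv\rangle=e^{\rho}(-E^1\sin\varphi+E^2\cos\varphi)$, whose degree $+1$ Fourier mode is $\lambda_1=\tfrac12 e^{\rho}(E^2+iE^1)e^{i\varphi}$ (up to the orientation convention). Since $\tilde E=e^{-2\sigma}E$ and $\tilde\rho=\rho+\sigma$, the factors $e^{\rho}$ and $e^{-2\sigma}$ combine to give $\tilde\lambda_1=e^{-\sigma}\lambda_1$; invariantly, this reflects the fact that the $g$-dual $1$-form $E^\flat=g(E,\cdot)$ is preserved under $(g,E)\mapsto(e^{2\sigma}g,e^{-2\sigma}E)$, which is exactly why the external field is rescaled by $e^{-2\sigma}$. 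Combining the two pieces and using $V\tilde u=im\tilde u=im\,e^{m\sigma}u$,
$$
\tilde\mu_+\tilde u=\tilde\eta_+\tilde u+\tilde\lambda_1 V\tilde u=e^{(m-1)\sigma}\eta_+u+e^{-\sigma}\lambda_1\, im\, e^{m\sigma}u=e^{(m-1)\sigma}\big(\eta_+u+\lambda_1 Vu\big)=e^{(m-1)\sigma}\mu_+u,
$$
as claimed.

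The coordinate computation itself is routine; the only genuine obstacle is the bookkeeping of sign and orientation conventions — which of $\tfrac12(X\pm iX_\perp)$ raises the Fourier degree, the direction of the rotation $i$, and the orientation built into the isothermal chart — since a single sign slip would alter the claimed weights. The conceptual point I would emphasize is that both $\eta_+$ and multiplication by $\lambda_1 V$ scale by the same factor $e^{(m-1)\sigma}$ on $\Omega_m$ after the twist by $e^{m\sigma}$: the first by the standard conformal behavior of $\eta_+$, the second because the field is rescaled precisely so that $E^\flat$ is conformally invariant. Hence their sum $\mu_+$ inherits this scaling, and its kernel is preserved.
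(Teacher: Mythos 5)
Your proposal is correct and follows essentially the same route as the paper: an explicit isothermal-coordinate computation of $\eta_+$ and of the degree-one component of $\lambda$, showing that after the twist by $e^{m\sigma}$ both pieces of $\mu_+$ scale by $e^{(m-1)\sigma}$, which is exactly the content of the paper's formulas \eqref{mu+ expression} and \eqref{mu+ expression tilde}. The only cosmetic difference is that you state the scaling law $\tilde\mu_+\tilde u=e^{(m-1)\sigma}\mu_+u$ term by term, whereas the paper packages the same identity as the statement that the operator $(\p-me^{2\rho}\alpha_z)$ applied to $he^{-m\rho}$ is unchanged under the conformal rescaling.
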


Before giving the proof we introduce some conventions. If $A$ is a notation for some object in the context of the thermostat $(M,g,E)$, by $\tilde A$ we denote the same object but in the context of the thermostat $(M,\tilde g,\tilde E)$. For example, since $SM$ denotes the unit sphere bundle with respect to the metric $g$, then $\tilde SM$ denotes the unit sphere bundle with respect to the metric $\tilde g$. Another example, by $\alpha$ we denote the $1$-form dual to the external vector field $E$ with respect to the metric $g$. Then $\tilde \alpha$ denotes the $1$-form dual to the external vector field $\tilde E$ with respect to the metric $\tilde g$.

\begin{proof}
Consider the isothermal coordinates $(x,y)$ on $(M,g)$ such that the metric can be written as $ds^2=e^{2\rho}(dx^2+dy^2)$ where $\rho\in C^\infty(M,\R)$. This gives coordinates $(x,y,\varphi)$ on $SM$ where $\varphi$ is the angle between a unit vector $v$ and $\de{}{x}$. In these coordinates, we have $V=\de{}{\varphi}$ and
\begin{align*}
X&=e^{-\rho}\(\cos\varphi\de{}{x}+\sin\varphi\de{}{y}+\(-\de{\rho}{x}\sin\varphi+\de{\rho}{y}\cos\varphi\)\de{}{\varphi}\),\\
X_\perp&=-e^{-\rho}\(-\sin\varphi\de{}{x}+\cos\varphi\de{}{y}-\(\de{\rho}{x}\cos\varphi+\de{\rho}{y}\sin\varphi\)\de{}{\varphi}\).
\end{align*}
Consider $u\in\Omega_m$ and write $u(x,y,\varphi)=h(x,y)e^{im\varphi}$. Then a straightforward calculation, using these formulas, shows that
\begin{equation}\label{eta+ expression}
\eta_+(u)=e^{(m-1)\rho}\p(he^{-m\rho})e^{i(m+1)\varphi},
\end{equation}
where $\p=\frac{1}{2}\(\de{}{x}-i\de{}{y}\)$.

In order to write $\mu_+$ we set $\alpha_z:=\frac{1}{2}(E^1-iE^2)$ where $E^1$ and $E^2$ are coordinates of the vector field $E$, i.e. $E=(E^1,E^2)$. A straightforward calculation shows that
$$
\alpha_{+}(x,y,\varphi)=\alpha_z(x,y)\,e^{\rho}e^{i\varphi}.
$$
Combine this with \eqref{eta+ expression} and \eqref{def-mu}, we obtain
\begin{equation}\label{mu+ expression}
\mu_+(u)=e^{(m-1)\rho}(\p-me^{2\rho}\alpha_z)(he^{-m\rho})e^{i(m+1)\varphi}.
\end{equation}

The same coordinates $(x,y)$ will be isothermal on $(M,\tilde g)$ and the metric $\tilde g$ can be written as $d\tilde s^2=e^{2\rho+2\sigma}(dx^2+dy^2)$. Then the coordinates on $\tilde SM$ will be $(x,y,\varphi)$ where $\varphi$ is as before. In these coordinates we have
\begin{equation}\label{mu+ expression tilde}
\tilde \mu_+(\tilde u)=e^{(m-1)(\rho+\sigma)}(\p-me^{2\rho+2\sigma}\tilde \alpha_z)(\tilde he^{-m(\rho+\sigma)})e^{i(m+1)\varphi}
\end{equation}
for any $\tilde u\in \tilde\Omega_m$ written as $\tilde u(x,y,\varphi)=\tilde h(x,y)e^{im\varphi}$.

Assume that $\mu_+u=0$, where $u\in \Omega_m$ is written as $u(x,y,\varphi)=h(x,y)e^{im\varphi}$. Then from \eqref{mu+ expression} we conclude that $(\p-me^{2\rho}\alpha_z)(he^{-m\rho})=0$. 

Now consider $\tilde u=e^{m\sigma}u$. Then $\tilde u=\tilde h e^{im\varphi}$ with $\tilde h=e^{m\sigma}h$, and $\tilde \alpha_z=e^{-2\sigma}\alpha_z$. Therefore, by \eqref{mu+ expression tilde}, we have
\begin{align*}
\tilde \mu_+(\tilde u)&=e^{(m-1)(\rho+\sigma)}(\p-me^{2\rho+2\sigma}\tilde \alpha_z)(\tilde he^{-m(\rho+\sigma)})e^{i(m+1)\varphi}\\
&=e^{(m-1)(\rho+\sigma)}(\p-me^{2\rho}\alpha_z)(he^{-m\rho})e^{i(m+1)\varphi}.
\end{align*}
Thus, we conclude that $\tilde\mu_+\tilde u=0$.
\end{proof}

\begin{Lemma}\label{divergence}
Let $(M,g,E)$ be a Gaussian thermostat on an oriented surface. If $(M,\tilde g,\tilde E)$ is the conformal Gaussian thermostat, that is $\tilde g= e^{2\sigma}g$ and $\tilde E=e^{-2\sigma}E$, then $\div_{\tilde g}\tilde E=e^{-2\sigma}\div_gE$.
\end{Lemma}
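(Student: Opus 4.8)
The plan is to reduce everything to the standard coordinate expression for the divergence and to track how the Riemannian volume density transforms under the conformal scaling. Recall that in any local coordinate system the divergence of a vector field $Z$ with respect to a metric $g$ is
$$\div_g Z=\frac{1}{\sqrt{\det g}}\,\p_i\big(\sqrt{\det g}\,Z^i\big).$$
So the essential input is the behaviour of $\sqrt{\det g}$ and of the components $Z^i$ under the map $(g,E)\mapsto(e^{2\sigma}g,e^{-2\sigma}E)$.

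First I would record that on a surface $\det\tilde g=e^{4\sigma}\det g$, hence $\sqrt{\det\tilde g}=e^{2\sigma}\sqrt{\det g}$; this is where two-dimensionality enters, since in dimension $n$ one would instead get the factor $e^{n\sigma}$. Next, since $\tilde E=e^{-2\sigma}E$, the components satisfy $\tilde E^i=e^{-2\sigma}E^i$ in the same coordinates. The key observation is then the cancellation
$$\sqrt{\det\tilde g}\,\tilde E^i=e^{2\sigma}\sqrt{\det g}\cdot e^{-2\sigma}E^i=\sqrt{\det g}\,E^i,$$
so that the quantity differentiated in the divergence formula is conformally invariant. Substituting into the formula for $\div_{\tilde g}\tilde E$, only the outer prefactor $1/\sqrt{\det\tilde g}=e^{-2\sigma}/\sqrt{\det g}$ survives the change, giving
$$\div_{\tilde g}\tilde E=\frac{1}{\sqrt{\det\tilde g}}\,\p_i\big(\sqrt{\det\tilde g}\,\tilde E^i\big)=e^{-2\sigma}\,\frac{1}{\sqrt{\det g}}\,\p_i\big(\sqrt{\det g}\,E^i\big)=e^{-2\sigma}\div_g E,$$
which is the claim. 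If one prefers to stay in the isothermal coordinates already used above, one may simply take $\sqrt{\det g}=e^{2\rho}$ and $\sqrt{\det\tilde g}=e^{2\rho+2\sigma}$ and repeat the same cancellation.

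There is essentially no real obstacle here: the entire content is the scaling $\sqrt{\det\tilde g}=e^{2\sigma}\sqrt{\det g}$ special to dimension two, which is exactly calibrated to the scaling $\tilde E=e^{-2\sigma}E$ so that the flux density $\sqrt{\det g}\,E^i$ is preserved. The only point to be careful about is not to confuse this dimension-two factor with the general $e^{n\sigma}$; this is precisely why the conformal rescaling of $E$ is taken with the weight $-2\sigma$ rather than some other power, and it is what makes the factor $e^{-2\sigma}$ come out cleanly on the right-hand side.
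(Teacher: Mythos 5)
Your proof is correct. It reaches the same conclusion as the paper by what is at heart the same kind of argument (a direct local-coordinate computation), but the packaging is genuinely different: the paper works in isothermal coordinates, writes out the Christoffel symbols of $g$ and of $\tilde g=e^{2\sigma}g$ explicitly, and verifies by expansion that the extra $\partial\sigma$ terms cancel against the derivative of the factor $e^{-2\sigma}$ in $\tilde E$. You instead invoke the density formula $\div_g Z=(\det g)^{-1/2}\,\partial_i\bigl((\det g)^{1/2}Z^i\bigr)$ in arbitrary coordinates and observe that the flux density $(\det g)^{1/2}E^i$ is exactly conformally invariant because $(\det\tilde g)^{1/2}=e^{2\sigma}(\det g)^{1/2}$ in dimension two while $\tilde E^i=e^{-2\sigma}E^i$. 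Your route is shorter, avoids Christoffel symbols altogether, and isolates the structural reason the weight $-2\sigma$ on $E$ is the right one on a surface; the paper's computation has the minor advantage of staying within the isothermal framework already set up for the surrounding lemmas (and of producing the explicit formula $\div_g E=\partial_x E^1+\partial_y E^2+2(\partial_x\rho\,E^1+\partial_y\rho\,E^2)$, though that formula is not reused elsewhere). Either argument is complete and there is no gap in yours.
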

\begin{proof}
The proof follows by straightforward computations in isothermal coordinates $(x,y)$ on $(M,g)$. The Christoffel symbols are
$$
\Gamma_{11}^1=-\Gamma_{22}^1=\Gamma^2_{12}=\de{\rho}{x},\quad \Gamma_{22}^2=-\Gamma_{11}^2=\Gamma_{12}^1=\de{\rho}{y}.
$$
If $E=(E^1,E^2)$ in coordinates $(x,y)$, then the expression for $\div_gE$ is
$$
\div_g E=\nabla_{1}E^1+\nabla_2 E^2=\de{E^1}{x}+\de{E^2}{y}+2\(\de{\rho}{x}E^1+\de{\rho}{y}E^2\).
$$
Note that the metric $\tilde g$ can be written as $d\tilde s^2=e^{2\rho+2\sigma}(dx^2+dy^2)$. Therefore the Christoffel symbols for $\tilde g$ are
$$
\tilde \Gamma_{11}^1=-\tilde \Gamma_{22}^1=\tilde \Gamma^2_{12}=\de{(\rho+\sigma)}{x},\quad \tilde \Gamma_{22}^2=-\tilde \Gamma_{11}^2=\tilde \Gamma_{12}^1=\de{(\rho+\sigma)}.{y},
$$
Since $\tilde E=(e^{-2\sigma}E^1,e^{-2\sigma}E^2)$ in coordinates $(x,y)$, the expression for $\div_{\tilde g}\tilde E$ is
\begin{equation*}
\begin{split}
\div_{\tilde g}\tilde E&=\tilde \nabla_{1}E^1+\tilde \nabla_2 E^2=e^{-2\sigma}\(\de{E^1}{x}+\de{E^2}{y}+2\(\de{\rho}{x}E^1+\de{\rho}{y}E^2\)\)\\
& =e^{-2\sigma}\div_gE.
\end{split}
\end{equation*}
\end{proof}

\begin{Lemma}\label{negative curvature}
Let $(M,g,E)$ be a Gaussian thermostat on a closed oriented surface of genus $\geq 2$, then there exists a function $\sigma\in C^{\infty}(M,\mathbb{R})$, such that the conformal Gaussian thermostat $(M,e^{2\sigma}g,e^{-2\sigma}E)$ has negative thermostat curvature.
\end{Lemma}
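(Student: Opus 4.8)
The plan is to track how the thermostat curvature $\mathbb{K}=K-\div_gE$ behaves under the conformal change $(g,E)\mapsto(\tilde g,\tilde E)=(e^{2\sigma}g,e^{-2\sigma}E)$, and then to solve a single scalar Poisson equation to make the result negative. I would begin from the classical conformal transformation law for the Gaussian curvature of a surface, $\tilde K=e^{-2\sigma}(K-\Delta_g\sigma)$, where $\Delta_g$ is the Laplace--Beltrami operator of $g$; in the isothermal coordinates already used above this is just $\Delta_g=e^{-2\rho}(\p_x^2+\p_y^2)$, and the identity follows from $K=-e^{-2\rho}(\p_x^2+\p_y^2)\rho$. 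Combining this with Lemma~\ref{divergence}, which gives $\div_{\tilde g}\tilde E=e^{-2\sigma}\div_gE$, the thermostat curvature transforms as
$$
\tilde{\mathbb K}=\tilde K-\div_{\tilde g}\tilde E=e^{-2\sigma}\big(\mathbb K-\Delta_g\sigma\big).
$$
Since $e^{-2\sigma}>0$, it therefore suffices to produce $\sigma\in C^\infty(M,\R)$ with $\mathbb K-\Delta_g\sigma<0$ everywhere.

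I would achieve this by reducing to a constant right-hand side: look for $\sigma$ solving $\Delta_g\sigma=\mathbb K+c$ for a suitable positive constant $c$, so that $\mathbb K-\Delta_g\sigma\equiv -c<0$. On the closed surface $M$ the equation $\Delta_g\sigma=F$ is solvable in $C^\infty(M)$ precisely when $\int_MF\,d\Vol_g=0$, so I must choose $c$ to enforce $\int_M(\mathbb K+c)\,d\Vol_g=0$. To compute the integral of $\mathbb K$, note that $\int_M\div_gE\,d\Vol_g=0$ by the divergence theorem on a closed manifold, while $\int_MK\,d\Vol_g=2\pi\chi(M)$ by Gauss--Bonnet; hence $\int_M\mathbb K\,d\Vol_g=2\pi\chi(M)$, and the forced value is $c=-2\pi\chi(M)/\Vol_g(M)$.

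This is exactly where the genus hypothesis enters, and it is the only real constraint: for a closed oriented surface of genus $\ge2$ one has $\chi(M)=2-2\,\mathrm{genus}(M)<0$, so $c>0$. With this choice the solvability condition holds, and standard elliptic theory (the Fredholm alternative for $\Delta_g$, whose range on a closed surface is the orthogonal complement of the constants) yields a smooth solution $\sigma$. Then $\mathbb K-\Delta_g\sigma\equiv-c$ is a negative constant, and consequently $\tilde{\mathbb K}=-c\,e^{-2\sigma}<0$ on all of $M$, as required.

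The calculations here are routine, and I do not expect a genuine obstacle; the only place where care is needed is the bookkeeping of signs in the conformal law and in the solvability condition, which must line up so that the genus hypothesis $\chi(M)<0$ forces the auxiliary constant $c$ to be positive (making the equation solvable) while simultaneously driving the target curvature negative. It is worth remarking that making $\tilde{\mathbb K}$ a negative constant is slightly stronger than needed: any $\sigma$ with $\Delta_g\sigma>\mathbb K$ pointwise would suffice, and such a function exists exactly when $\int_M\mathbb K\,d\Vol_g<0$, i.e. when $\chi(M)<0$.
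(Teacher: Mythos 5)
Your proposal is correct and follows essentially the same route as the paper: apply the conformal transformation law for Gaussian curvature together with Lemma~\ref{divergence}, reduce to solving $\Delta_g\sigma=\mathbb K+c$ on the closed surface, and use Gauss--Bonnet plus $\int_M\div_gE\,d\Vol_g=0$ to see that the solvability condition forces $c=-2\pi\chi(M)/\Vol_g(M)>0$ exactly when the genus is at least $2$. The only difference is the sign convention on the auxiliary constant, which is immaterial.
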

\begin{proof}
Let $K, \tilde{K}$ be the Gaussian curvatures of $(M,g)$ and $(M,e^{2\sigma}g)$ respectively. It is well known that $\tilde{K}=e^{-2\sigma}(K-\Delta_g\sigma)$, here $\Delta_g$ is the Laplacian under the metric $g$. On the other hand, a straightforward calculation shows that the thermostat curvature of $(M,g,E)$ has the form
$$\mathbb{K}=K-\div_gE.$$ Above discussion together with Lemma \ref{divergence} implies that the thermostat curvature of $(M,e^{2\sigma}g,e^{-2\sigma}E)$ is
$$\tilde{\mathbb{K}}=\tilde{K}-\div_{\tilde{g}}\tilde{E}=e^{-2\sigma}(K-\Delta_g\sigma-\div_gE).$$

To prove the lemma, we need to find a real-valued smooth function $\sigma$ and a constant $c<0$ for the following equation 
\begin{equation}\label{genus}
K-\Delta_g\sigma-\div_gE=c<0.
\end{equation}
Notice that on a closed connected Riemannian surface, the solvability condition for \eqref{genus} is
$$0=\int_M K-c-\div_gE\,d{\rm Vol}_g=\int_M K-c\,d{\rm Vol}_g.$$
By the Gauss-Bonnet theorem and the assumption that the genus $\geq 2$ (i.e. the Euler characteristic $\chi(M)<0$), we can choose
$$c=\frac{\int_M K\,d{\rm Vol}_g}{{\rm Vol}_g(M)}=\frac{2\pi\chi(M)}{{\rm Vol}_g(M)}<0,$$
where ${\rm Vol}_g(M)$ is the volume of $M$ under the metric $g$.

Thus there exists $\sigma\in C^{\infty}(M,\mathbb{R})$ such that 
$$\tilde{\mathbb{K}}=e^{-2\sigma}\frac{2\pi\chi(M)}{{\rm Vol}_g(M)}<0.$$
\end{proof}

Lemma \ref{invariant} and \ref{negative curvature} imply that to prove Proposition \ref{injectivity of mu operators}, we only need to show that it's true for the case $\mathbb{K}<0$.

\begin{Lemma}
Given a Gaussian thermostat $(M,g,E)$ on a closed oriented surface with $\mathbb{K}=K-\div_g E<0$, where $K$ is the Gaussian curvature of $(M,g)$, then $\mu_+:\Omega_k\to \Omega_{k+1}$ is injective for $k\ge 1$ and $\mu_-:\Omega_k\to \Omega_{k-1}$ is injective for $k\le -1$.
\end{Lemma}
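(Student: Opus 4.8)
The plan is to read the desired injectivity straight off the Pestov identity of Theorem~\ref{pestov}, using the splitting $\bfG_E=\mu_++\mu_-$ together with the Fourier decomposition $L^2(SM)=\bigoplus_k H_k$. The reduction lemmas (Lemma~\ref{invariant} and Lemma~\ref{negative curvature}) have already brought us to the case $\mathbb{K}<0$, and it is exactly the strict sign of $\mathbb{K}$ that will be converted, through Pestov, into a coercive estimate forcing vanishing.

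First I would fix $u\in\Omega_k$ and record the elementary facts that $\mu_+u\in\Omega_{k+1}$ and $\mu_-u\in\Omega_{k-1}$ are orthogonal and that $Vu=iku$. Since $\bfG_E u=\mu_+u+\mu_-u$, these give
\[
\|\bfG_E u\|^2=\|\mu_+u\|^2+\|\mu_-u\|^2,\qquad \|\bfG_E Vu\|^2=k^2\big(\|\mu_+u\|^2+\|\mu_-u\|^2\big),
\]
\[
\|V\bfG_E u\|^2=(k+1)^2\|\mu_+u\|^2+(k-1)^2\|\mu_-u\|^2,\qquad (\mathbb{K}Vu,Vu)=k^2(\mathbb{K}u,u),
\]
the last identity because $\mathbb{K}$ is a real function on $M$ and so commutes with the $L^2$ pairing.

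Next I would substitute these four quantities into the Pestov identity $\|\bfG_E Vu\|^2-(\mathbb{K}Vu,Vu)=\|V\bfG_E u\|^2-\|\bfG_E u\|^2$. Expanding $(k\pm1)^2=k^2\pm2k+1$, the terms $k^2\|\mu_\pm u\|^2$ cancel from both sides and one is left with $-k^2(\mathbb{K}u,u)=2k\big(\|\mu_+u\|^2-\|\mu_-u\|^2\big)$; dividing by $2k$ (legitimate since we only treat $|k|\ge 1$) yields the clean energy identity
\[
\|\mu_+u\|^2-\|\mu_-u\|^2=-\frac{k}{2}\,(\mathbb{K}u,u),\qquad u\in\Omega_k .
\]
If $\mu_+u=0$ with $k\ge1$, this gives $\|\mu_-u\|^2=\frac{k}{2}\int_{SM}\mathbb{K}|u|^2\,d\Sigma^3\le 0$ because $k\ge1$ and $\mathbb{K}<0$, while the left side is $\ge0$; hence $\int_{SM}\mathbb{K}|u|^2=0$, which forces $u\equiv0$ since $\mathbb{K}$ is strictly negative on the compact surface. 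The case of $\mu_-$ on $\Omega_k$ with $k\le-1$ is symmetric: $\mu_-u=0$ yields $\|\mu_+u\|^2=-\frac{k}{2}\int_{SM}\mathbb{K}|u|^2\,d\Sigma^3\le0$, so again $u\equiv0$. This establishes injectivity of $\mu_+$ for $k\ge1$ and of $\mu_-$ for $k\le-1$.

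Since Theorem~\ref{pestov} carries all of the analytic weight, the proof is essentially algebraic, and the only delicate points are the bookkeeping that produces precisely the coefficient $-k/2$ and the careful tracking of the two sign regimes, so that the strict negativity of $\mathbb{K}$ is exploited in the correct direction for $\mu_+$ and for $\mu_-$ separately. If one preferred to avoid the Pestov identity, the same energy identity can be derived from the adjoint relations $\mu_+^*=-\mu_-+i\lambda_{-1}$ and $\mu_-^*=-\mu_+-i\lambda_1$ together with the commutator $[\mu_+,\mu_-]=\tfrac{i}{2}\mathbb{K}V-i\lambda_{-1}\mu_+-i\lambda_1\mu_-$, whose curvature term rests on the geometric identity $\eta_+\lambda_{-1}-\eta_-\lambda_1=-\tfrac{i}{2}\div_g E$ verified in the isothermal coordinates of \eqref{eta+ expression}; this route involves considerably more computation, so I would prefer the Pestov derivation above.
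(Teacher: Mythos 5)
Your proof is correct and follows essentially the same route as the paper: substituting the Fourier-mode expressions for $\bfG_E u$, $\bfG_E Vu$ and $V\bfG_E u$ into the Pestov identity to obtain $2k\|\mu_-u\|^2=2k\|\mu_+u\|^2+k^2(\mathbb{K}u,u)$, and then using the strict negativity of $\mathbb{K}$ to force $u\equiv 0$. If anything, your write-up is slightly more explicit than the paper's at the final step, where the vanishing of $(\mathbb{K}u,u)$ is what actually kills $u$.
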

\begin{proof}
Let $u\in\Omega_k$, since $\bfG_E=\mu_++\mu_-$, the following expressions hold
\begin{align*}
\bfG_E u=\mu_+u+&\mu_-u,\quad \bfG_E Vu=ik\mu_+ u+ik\mu_- u,\\
V\bfG_E u&=i(k+1)\mu_+ u+i(k-1)\mu_- u.
\end{align*}
Substituting these into the Pestov identity, we obtain an integral identity
$$
2k\|\mu_- u\|^2=2k\|\mu_+ u\|^2+k^2(\mathbb{K} u,u).
$$
According to our hypothesis $\mathbb{K}<0$, we come to the following inequality
\begin{equation}\label{temp-ineq}
2k\|\mu_- u\|^2\le2k\|\mu_+ u\|^2.
\end{equation}
Consider the case $k\ge 1$ and assume $\mu_+ u=0$, we get
$$
0\le\|\mu_- u\|^2\le 0,
$$
Thus $u\equiv0$ as desired. Using similar ideas for the case $k\le -1$ one can prove that $\mu_- u=0$ implies $u\equiv 0$.
\end{proof}


\section{Injectivity of $I_m$}
Before giving the proof of the $s$-injectivity of $I_m$, it is worth pointing out that if the terminator value of a Gaussian thermostat $(M,g,E)$ is $\beta_{\rm ter}$, then $(M,g,E)$ is free of $\beta_{\rm ter}$-conjugate points. Indeed assume that $(M,g,E)$ has $\beta_{\rm ter}$-conjugate points, i.e. there exists a thermostat geodesic $\gamma$ and a non-trivial solution $y(t)$ to the $\beta_{\rm ter}$-Jacobi equation along $\gamma$ such that $y(0)=y(T)=0$ for some $T>0$. Notice that $\dot{y}(T)\neq 0$, thus there is a small neighborhood $U$ of $\beta_{\rm ter}$, such that for all $\beta\in U$ there are $\beta$-conjugate points. This contradicts the definition of the terminator values. 

Since $(M,g,E)$ has no $\beta_{\rm ter}$-conjugate points, by Remark \ref{no b conj impies (b-1)/b-controlled}, it is $(\beta_{\rm ter}-1)/\beta_{\rm ter}$-controlled. Notice that for Anosov Gaussian thermostats, there are no conjugate points in the usual sense, which means that $\beta_{\rm ter}\geq 1$ (actually one can get $\beta_{\rm ter}>1$ for Anosov Gaussian thermostats).


The following injectivity result will imply Theorem \ref{th1}.
\begin{Theorem}\label{alpha controlled injectivity}
Let $(M,g,E)$ be a Gaussian thermostat on a closed surface of genus $\mathtt g\ge 2$ which is $(m-1)/(m+1)$-controlled. Let $\varphi$ be a symmetric $m$-tensor and suppose that there is a smooth solution $h$ to the transport equation
$$
\bfG_{E} h=\varphi.
$$
Then $h$ is of degree $m-1$.
\end{Theorem}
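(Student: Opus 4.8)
The plan is to reduce the statement to showing that the \emph{tail} $u:=\sum_{|k|\ge m}h_k$ of the Fourier expansion of $h$ vanishes. Since $h$ is smooth and the complementary sum $\sum_{|k|\le m-1}h_k$ is a finite sum of smooth functions, $u$ lies in $\bigoplus_{|k|\ge m}\Omega_k$ and is a legitimate input for Proposition~\ref{prop1}. The starting point is the transport equation: writing $\bfG_E=\mu_++\mu_-$ and decomposing into Fourier modes, the $j$-th component of $\bfG_E h=\varphi$ reads $\mu_+h_{j-1}+\mu_-h_{j+1}=\varphi_j$. Because $\varphi$ is a symmetric $m$-tensor we have $\varphi_j=0$ for $|j|\ge m+1$, so
$$
\mu_+h_{j-1}+\mu_-h_{j+1}=0\qquad\text{for all }|j|\ge m+1.
$$
This single family of relations drives the whole argument.

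Next I would extract $Q_mu$ and $v:=T_m\bfG_E u$. A direct mode count shows that for $|j|\ge m+1$ the indices $j\pm1$ satisfy $|j\pm1|\ge m$, so $(\bfG_E u)_j=\mu_+h_{j-1}+\mu_-h_{j+1}$, which vanishes by the display above; hence $v=0$, and since $Q_mu=T_mV\bfG_E u$ only sees modes $|j|\ge m+1$ where $(\bfG_E u)_j=0$, also $Q_mu=0$. Now I invoke Proposition~\ref{prop1} at the critical value $\alpha=(m-1)/(m+1)$. The point is that for this $\alpha$ the two coefficients simplify to $1-(m-1)^2+\alpha m^2=\tfrac{2m}{m+1}>0$ and $1-m^2+\alpha(m+1)^2=0$; substituting $Q_mu=0$ into the inequality of Proposition~\ref{prop1} therefore forces all surviving nonnegative terms to vanish, and in particular (using $\alpha>0$) $w:=T_m\bfG_E Vu=0$.

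Finally I would convert $w=0$ into a statement to which Proposition~\ref{injectivity of mu operators} applies. For $|j|\ge m+1$ one computes $(\bfG_E Vu)_j=i(j-1)\mu_+h_{j-1}+i(j+1)\mu_-h_{j+1}$; eliminating $\mu_-h_{j+1}=-\mu_+h_{j-1}$ via the transport relation gives $(\bfG_E Vu)_j=-2i\,\mu_+h_{j-1}$. Thus $w=0$ yields $\mu_+h_k=0$ for every $k\ge m$ and, equivalently, $\mu_-h_k=0$ for every $k\le-m$. Since the genus is $\ge2$, Proposition~\ref{injectivity of mu operators} gives that $\mu_+$ is injective on $\Omega_k$ for $k\ge1$ and $\mu_-$ is injective on $\Omega_k$ for $k\le-1$; as $m\ge2$ this forces $h_k=0$ for all $|k|\ge m$, i.e. $u=0$, so $h$ has degree $m-1$.

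The main obstacle is precisely the critical (``terminator'') case $\alpha=(m-1)/(m+1)$: here the strict coercivity of Lemma~\ref{Corollary} is unavailable, because the second coefficient $1-m^2+\alpha(m+1)^2$ degenerates to $0$. The resolution is that this degeneracy is harmless once one feeds in the \emph{exact} identity $Q_mu=0$ coming from the transport equation rather than seeking an a priori estimate: the vanishing coefficient merely drops a term, while the still-positive coefficient $\tfrac{2m}{m+1}$ and the factor $\alpha>0$ in front of $\|w\|^2$ suffice to conclude $w=0$. It is $w=0$ --- and not the boundary terms $\mu_-u_m,\ \mu_+u_{-m}$, on which no injectivity is available --- that, through the relation $(\bfG_E Vu)_j=-2i\,\mu_+h_{j-1}$, lands in exactly the ranges where $\mu_+$ and $\mu_-$ are injective.
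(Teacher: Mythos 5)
Your proof is correct and follows essentially the same route as the paper: reduce to the tail $u=\sum_{|k|\ge m}h_k$ with $Q_mu=0$, apply Proposition~\ref{prop1} at the critical value $\alpha=(m-1)/(m+1)$ (where the coefficients become $\tfrac{2m}{m+1}$ and $0$) to force the remaining non-negative terms to vanish, and conclude via Proposition~\ref{injectivity of mu operators}. The only, harmless, difference is the middle step: the paper recovers $\mu_+u_k=0$ for $k\ge m$ and $\mu_-u_k=0$ for $k\le -m$ from the vanishing of the boundary terms $\mu_-u_m,\ \mu_+u_{-m}$ together with the commutator identity $X_\perp u-V(\lambda)Vu=i\mu_-u-i\mu_+u$, whereas you solve the invertible $2\times 2$ mode-by-mode system coming from $v=0$ and $w=0$ directly; both yield the same conclusion.
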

\begin{proof}
Let $u=\sum_{|k|\geq m}h_k$, then $\bfG_Eu$ has degree $m$ and $Q_mu=0$. By Proposition~\ref{prop1} and the assumption $\alpha=(m-1)/(m+1)$, we get that
$$
\mu_-u_m=0\quad\text{and}\quad \mu_+u_{-m}=0.
$$
Thus
$$
\bfG_E u=\mu_-u_{m+1}+\mu_+u_{-(m+1)}
$$
and
$$
\bfG_E Vu=i(m+1)\mu_-u_{m+1}-i(m+1)\mu_+u_{-(m+1)}.
$$
Therefore,
\begin{multline*}
X_\perp u-V(\lambda)Vu=[\bfG_E,V]u\\
=i(m+1)\mu_-u_{m+1}-i(m+1)\mu_+u_{-(m+1)}-im\mu_-u_{m+1}+im\mu_+u_{-(m+1)}\\
=i\mu_-u_{m+1}-i\mu_+u_{-(m+1)}.
\end{multline*}
It is known that $X_\perp u-V(\lambda)Vu=i\mu_- u-i\mu_+ u$. Hence $\mu_-u=\mu_-u_{m+1}$ and $\mu_+u=\mu_+u_{-(m+1)}$, in particular, $\mu_+u_k=0$ and $\mu_-u_{-k}=0$ for $k\geq m$. Then Proposition~\ref{injectivity of mu operators} implies that $u\equiv0$, thus $h$ is of degree $m-1$.
\end{proof}

\begin{proof}[Proof of Theorem~\ref{th1}]
Let $\varphi$ be a symmetric $m$-tensor, such that $I_m\varphi\equiv 0$. By the smooth Livsic theorem, there is $h\in C^\infty(SM)$ such that $\bfG_Eh=\varphi$. 

On the other hand, a closed oriented surface whose unit sphere bundle carries an Anosov flow must have genus $\geq 2$. Indeed, by a classic result of Plante and Thurston \cite{PT}, if an $S^1$-bundle over a closed oriented surface carries an Anosov flow, the fundamental group of the bundle must grow exponentially. However the fundamental group of any $S^1$-bundle over a $2$-sphere or torus only has polynomial growth. 

Finally, by Remark \ref{no b conj impies (b-1)/b-controlled} and the discussion about terminator values at the beginning of this section, $(M,g,E)$ is $(m-1)/(m+1)$-controlled. 

Now Theorem \ref{th1} is a direct consequence of Theorem \ref{alpha controlled injectivity}.
\end{proof}





\section{Results for surfaces with boundary}
As mentioned in the introduction, some of the arguments above also work for compact surfaces with boundary. The main change when dealing with the boundary case is that the functions need to vanish on the boundary whenever appropriate. 

In this section we assume that $(M,g)$ is a compact oriented Riemannian surface with smooth boundary $\p M$, we will prove Theorem \ref{th2} which is an injectivity result for Gaussian thermostats $(M,g,E)$ on surfaces with boundary. Let $\Lambda$ denote the second fundamental form of $\p M$ and $\nu(x)$ the inward unit normal to $\p M$ at $x$. We say that $\p M$ is {\it strictly thermostat convex} if
\begin{equation}
\Lambda(x,v)>\langle E(x)-\<E(x),v\>v,\nu(x)\rangle
\end{equation}
for all $(x,v)\in S(\p M)$, here $E$ is the external field.

For $x\in M$, we define the {\it thermostat exponential map} by
$$
\exp^E_x(tv)=\pi\circ\phi_t(v),\quad t\ge 0,\mbox{ }v\in S_x M
$$
which is $C^1$-smooth on $T_x M$ and $C^{\infty}$-smooth on $T_x M\setminus \{0\}$.

We say that $(M,g,E)$ is {\it simple} if 1) $\p M$ is strictly thermostat convex and 2) the thermostat exponential map $\exp^E_x:(\exp^E_x)^{-1}(M)\to M$ is a diffeomorphism for every $x\in M$. These two conditions guarantee that every two points on $M$ are connected by a unique thermostat geodesic and there is no conjugate points. In this case, $M$ is diffeomorphic to the unit ball of $\mathbb R^n$, which is simply connected.

Results in Section 2 are still valid in the boundary case if the trace of $u$ or $v$ vanishes. The Pestov identity also holds:

\begin{Theorem}\label{pestov boundary}
Let $(M,g,E)$ be a Gaussian thermostat on a compact oriented surface with boundary. If $u\in C^\infty(SM,\C)$ and $u|_{\p SM}=0$, then
$$
\|\mathbf G_E Vu\|^2-(\K Vu,Vu)=\|V\mathbf G_E u\|^2-\|\mathbf G_E u\|^2.
$$
\end{Theorem}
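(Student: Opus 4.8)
The plan is to follow the same integration-by-parts strategy used in the closed case (Theorem~\ref{pestov}), but to track carefully the boundary contributions that arise from Stokes' theorem when $\p M \neq \emptyset$, and to show that they all vanish under the hypothesis $u|_{\p SM}=0$. The starting point is the structural machinery already assembled in Section~2: the commutation relations among $X$, $X_\perp$, $V$ and the derived commutator $[\mathbf G_E, V]u = X_\perp u - V(\lambda)Vu$, together with the Lie-derivative identities of Lemma~\ref{lie}. The Pestov identity is fundamentally an algebraic rearrangement of these commutators combined with integration by parts, so the skeleton of the computation is identical to the closed case; what changes is that the integration-by-parts step \eqref{stokes} must be replaced by its boundary version.

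First I would set up the boundary analogue of Lemma~\ref{lemma1}. For a vector field $\mathfrak{X}$ and a function $f$ on the manifold-with-boundary $SM$, Stokes' theorem gives $\int_{SM}\mathfrak{X}(f)\,\Theta = -\int_{SM} f\,L_{\mathfrak{X}}\Theta + \int_{\p SM} f\,\iota_{\mathfrak{X}}\Theta$, so each adjoint relation now acquires a boundary integral over $\p SM$. Consequently the three identities $(Vu,v)=-(u,Vv)$, $(X_\perp u,v)=-(u,X_\perp v)$, and $(\mathbf G_E u,v)=-(u,\mathbf G_E v)-(V(\lambda)u,v)$ each hold up to a boundary term that is a pairing of $u\bar v$ (or a first derivative thereof) against the contraction of the volume form with the relevant vector field. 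The key observation is that every such boundary term contains $u$ or $\bar v$ as a factor evaluated on $\p SM$; imposing $u|_{\p SM}=0$ kills precisely these contributions. I would verify this by writing the Pestov identity's proof in \cite[Theorem 3.3]{DP} as a chain of applications of these integration-by-parts relations, checking at each step that the boundary term carries an undifferentiated factor of $u$ restricted to $\p SM$.

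The main obstacle, and the step requiring genuine care, is bookkeeping the boundary terms that involve tangential derivatives of $u$ rather than $u$ itself. A naive expansion produces terms like $\int_{\p SM}(Vu)\,\overline{(\mathbf G_E u)}\,\iota_{(\cdot)}\,d\Sigma^3$, where a derivative of $u$ appears; one cannot conclude these vanish merely from $u|_{\p SM}=0$ unless the derivative is tangential to $\p SM$. The resolution is that the vanishing condition $u|_{\p SM}=0$ on the full hypersurface $\p SM$ forces all derivatives of $u$ tangent to $\p SM$ to vanish there as well, so the only surviving factors would be normal derivatives of $u$; I would check that the structure of the commutator identities is such that every boundary term pairs either $u$ directly or a tangential derivative of $u$ against the normal direction, so that all of them vanish. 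In practice, the cleanest route is to organize the computation so that each boundary contribution appears as $\int_{\p SM} \mathfrak{X}(u\bar v')\,(\cdots)$ with $u\bar v'$ vanishing identically on $\p SM$; since $u|_{\p SM}=0$ and the second fundamental form / convexity does not enter the identity itself, this suffices.

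Once the boundary terms are shown to vanish, the remaining interior computation is verbatim the closed-surface proof: the algebraic identity $\|\mathbf G_E V u\|^2 - (\mathbb{K}Vu, Vu) = \|V\mathbf G_E u\|^2 - \|\mathbf G_E u\|^2$ follows from the same manipulation of the commutator $[\mathbf G_E,V]$ and the curvature term $\mathbb{K}=K+X_\perp\lambda+\lambda^2+\mathbf G_E V\lambda$. Thus the whole proof reduces to the statement, already flagged in the Remark after Theorem~\ref{pestov} and in the opening of Section~8, that the Section~2 results remain valid when $u$ (or $v$) has vanishing trace. I would therefore present the proof as a careful repetition of the argument in \cite{DP}, emphasizing only the boundary-term analysis, and cite the interior algebra as identical to the closed case.
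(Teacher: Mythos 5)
Your proposal is correct and is exactly the argument the paper relies on: the paper offers no separate proof, simply asserting (in the Remark after Theorem~\ref{pestov} and at the start of Section~8) that the closed-case computation of \cite[Theorem 3.3]{DP} goes through because every boundary term produced by Stokes' theorem carries a factor of $u$ or of $Vu$ (with $V$ tangent to $\p SM$, so $Vu|_{\p SM}=0$), or else involves $\iota_V d\Sigma^3$, which pulls back to zero on $\p SM$. Your bookkeeping of the boundary contributions is precisely the verification the paper leaves implicit.
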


Notice that the estimate of Theorem \ref{existence of alpha-control} plays an important role in the arguments for the case of closed surfaces. To establish our result for the boundary case, we need a similar estimate. Given a Riemannian surface $M$ with boundary, denoting $\p SM$ the boundary of $SM$, we define a subset of $\p SM$,
$$\p_+SM:=\{(x,v)\in \p SM: \<v,\nu(x)\>_g\geq 0\}.$$
Note that $\nu(x)$ is the inward unit normal to $\p M$ at $x$. We start with the following existence result of distinct solutions to the Riccati equation on 2D simple Gaussian thermostats. 


\begin{Lemma}\label{distinct solutions}
Let $(M,g,E)$ be a simple Gaussian thermostat on a compact oriented surface with boundary. Then there exist smooth nowhere equal solutions $r^+$ and $r^-$ to the Riccati type equation
\begin{equation}\label{Riccati}
\bfG_E r+r^2-V(\lambda)r+\mathbb K-\bfG_E V(\lambda)=0.
\end{equation} 
\end{Lemma}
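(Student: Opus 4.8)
The plan is to produce the two solutions $r^\pm$ by solving initial/terminal value problems along the thermostat flow and exploiting the absence of conjugate points that simplicity provides. First I would rewrite the Riccati equation \eqref{Riccati} so that it becomes the equation satisfied by the logarithmic derivative of a solution to the $\beta$-Jacobi equation \eqref{b-Jeq} with $\beta=1$ (ordinary conjugate points). Concretely, if $y(t)$ solves the thermostat Jacobi equation $\ddot y - V(\lambda)\dot y + (\mathbb K - \bfG_E V(\lambda))y = 0$ along a thermostat geodesic, then setting $r:=\dot y/y$ turns that second-order linear ODE into precisely \eqref{Riccati}, where $\bfG_E$ denotes differentiation along the flow. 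Thus the construction of $r^\pm$ reduces to producing two globally defined, nowhere-vanishing Jacobi fields along every orbit whose logarithmic derivatives patch together into smooth functions on $SM$.

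Next I would use simplicity to manufacture these Jacobi fields. Because $(M,g,E)$ is simple, the thermostat exponential map is a diffeomorphism and there are no conjugate points, so the thermostat geodesic flow is defined up to the boundary and every maximal orbit has finite length with distinct endpoints on $\p SM$. For a point $(x,v)\in SM$, let $\tau_+(x,v)$ and $\tau_-(x,v)$ be the forward and backward exit times. I would define $r^+$ at $(x,v)$ via the Jacobi field $y^+$ along the orbit through $(x,v)$ that vanishes at the \emph{backward} endpoint (i.e.\ at the point where the orbit enters $SM$, at time $-\tau_-$) with $\dot y^+=1$ there; symmetrically $r^-$ comes from the Jacobi field $y^-$ vanishing at the \emph{forward} endpoint. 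The no-conjugate-points condition guarantees that $y^+$ has no zero in the interior of the orbit, so $r^+=\dot y^+/y^+$ is well defined and finite along the whole orbit; likewise for $r^-$. Smoothness of $r^\pm$ on $SM$ follows from smooth dependence of solutions of linear ODE on initial conditions together with the $C^\infty$ smoothness of the exit-time functions $\tau_\pm$ away from the glancing set, which simplicity and strict thermostat convexity of $\p M$ guarantee.

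Finally I would verify $r^+\neq r^-$ everywhere. Since $y^+$ vanishes only at the backward endpoint and $y^-$ only at the forward endpoint, the two Jacobi fields are linearly independent along the orbit, so their Wronskian $W=y^+\dot y^- - \dot y^+ y^-$ is nowhere zero (a nonzero constant multiple of an integrating-factor exponential coming from the $-V(\lambda)\dot y$ term). Dividing by $y^+y^-$ gives $r^- - r^+ = W/(y^+y^-)\neq 0$ at every point, which is exactly the claimed nowhere-equal property. The main obstacle I anticipate is the regularity of $r^\pm$ up to and at the boundary $\p SM$: the exit times $\tau_\pm$ degenerate at the glancing directions in $S(\p M)$, and the Jacobi field used to define $r^\pm$ vanishes exactly at the endpoint where the construction is anchored, so one must check that the ratio $\dot y/y$ extends smoothly (rather than blowing up) as $(x,v)$ approaches that endpoint. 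Handling this requires the strict thermostat convexity hypothesis to control the behaviour near glancing and a careful local analysis of $\dot y^\pm$ and $y^\pm$ near their zeros; I expect this boundary analysis, and not the interior Riccati computation, to be the delicate part of the argument.
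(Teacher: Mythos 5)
Your reduction of \eqref{Riccati} to the thermostat Jacobi equation via $r=\dot y/y$, and your use of the no-conjugate-points property to keep an anchored Jacobi field nonvanishing along the interior of each orbit, are exactly the paper's starting point. The genuine gap is in \emph{where} you anchor the Jacobi fields. You take $y^+$ to vanish at the point where the orbit enters $SM$ (and $y^-$ at the exit point). At that entrance point $y^+\to 0$ while $\dot y^+\to 1$, so $r^+=\dot y^+/y^+\to+\infty$ there: the ratio genuinely blows up and cannot ``extend smoothly,'' contrary to the hope expressed in your last paragraph. Since $SM$ is compact and includes $\p SM$, the functions you construct are not smooth solutions on $SM$ as the lemma requires (and as the proof of Theorem \ref{alpha controll boundary} needs, since it integrates $\|\bfG_E\varphi-r\varphi+V(\lambda)\varphi\|^2$ over all of $SM$). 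In addition, the entrance/exit times are not smooth at glancing directions in $S(\p M)\subset SM$, and at a glancing point the anchoring endpoint coincides with the point itself, so $r^\pm$ is not even defined there. No local analysis near the anchor can repair this; the blow-up is real.

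The paper's fix is to move the singular set off of $SM$: it embeds $M$ into nested strictly larger simple thermostats $M\subset\tilde M^{\rm int}\subset\tilde M\subset\widetilde M^{\rm int}\subset\widetilde M$ and anchors the Jacobi fields at the influx boundaries $\p_+S\widetilde M$ and $\p_+S\tilde M$ respectively, so that $r^\pm$ blow up only on those outer boundaries and are smooth on $SM$. Distinctness is then obtained by noting that $r^+$ (anchored far away) is bounded on $\p_+S\tilde M$ while $r^-$ tends to $+\infty$ there, so the two are separated on the influx boundary of a suitable intermediate surface $U\supset M$, and uniqueness for the Riccati ODE along each orbit propagates $r^+\neq r^-$ into the interior. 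Your Wronskian argument (with $\dot W=V(\lambda)W$, hence $W$ nowhere zero once nonzero) is a clean alternative to that last separation-plus-uniqueness step and would work verbatim once both Jacobi fields are anchored on the boundary of an extension rather than on $\p M$ itself; but as written, without the extension, your construction does not produce smooth solutions on $SM$.
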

\begin{proof}
We embed $M$ into larger compact surfaces $\tilde M$, $\widetilde M$ with boundary such that $M\subset\tilde M^{int}\subset \tilde M\subset \widetilde M^{int}\subset \widetilde M$, and extend $g$ and $E$ smoothly onto $\widetilde M$ such that $(\tilde M,g,E)$ and $(\widetilde M,g,E)$ are simple too.

We consider a maximum thermostat geodesic $\gamma_{z}:[0,\,l]\to \widetilde M$ with $z=(\gamma_z(0),\dot{\gamma}_z(0))\in \p_+S\widetilde M$. Let $y_z$ be the solution to the thermostat Jacobi equation 
$$\ddot{y}_z-V(\lambda)\dot{y}_z+(\mathbb K-\bfG_E V(\lambda))y_z=0$$
along $\gamma_z$ satisfying $y_z(0)=0$, $\dot{y}_z(0)=1$. By the simplicity of $(\widetilde M,g,E)$, $\gamma_z$ has no conjugate points, thus $r(z,t)=\frac{\dot y_z(t)}{y_z(t)}$ is a solution to the Riccati equation on $(0,\,l]$ with $\lim_{t\to 0}r(t)=+\infty$. Notice that $r(z,t)$ smoothly depends on $z\in \p_+S\widetilde M$. We do the same thing for all the thermostat geodesics on $\widetilde M$, which can be parametrized by $z\in \p_+S\widetilde M$, to get a well-defined smooth solution $r^+(x,\xi)=r(z(x,\xi),\tau^-(x,\xi))$ to the Riccati equation \eqref{Riccati} on $S\widetilde M^{int}$, where $(x,\xi)=(\gamma_z(\tau^-(x,\xi)),\dot{\gamma}_z(\tau^-(x,\xi)))$, $\tau^-(x,\xi)$ is the length of the unique thermostat geodesic segment connecting $\pi(z)$ and $x$ with $\xi\in S_x\widetilde M$ tangent to $\gamma_z$ at $x$. It is not difficult to see that $z$ and $\tau^-$ smoothly depend on $(x,\xi)\in S\widetilde M^{int}$. Moreover $\lim_{(x,\xi)\to \p_+S\widetilde M}r^+(x,\xi)=+\infty$.

Notice that by our definition of $\tilde M$ and $\widetilde M$, the restriction to $\tilde M$ of a thermostat geodesic $\gamma$ of $(\widetilde M, g, E)$ (if nonempty), $\gamma|_{\tilde M}$, is a thermostat geodesic of $(\tilde M,g,E)$. By a similar approach as above with the initial condition $y_z(0)=0$, $\dot{y}_z(0)=1$ at $z\in \p_+S\tilde M$ for the thermostat Jacobi equation, one can get a smooth solution $r^-$ to the Riccati equation \eqref{Riccati} on $S\tilde M^{int}$ with $\lim_{(x,\xi)\to \p_+S\tilde M}r^-(x,\xi)=+\infty$. 

Since $S\tilde M\subset S\widetilde M$ and $\p_+S\tilde M$ is compact, there exists $K>0$ such that $\sup_{(x,\xi)\in \p_+S\tilde M}r^+(x,\xi)\leq K$. We can find a smaller compact surface $U$, whose boundary $\p U$ is uniformly, sufficiently close to $\p\tilde M$, with $M\subset U\subset \tilde M^{int}$ and $(U,g,E)$ is still simple. Then there exists $c>0$ such that $\sup_{\p_+SU}r^+< K+c$ and $\inf_{\p_+SU}r^-> K+c$, i.e. $r^+$ and $r^-$ never coincide on $\p_+SU$.

Now we claim that $r^+\neq r^-$ on $SM$ (Actually $r^+\neq r^-$ on $S\tilde M^{int}$). We prove by contradictions, assume that there exists $(x,\xi)\in SM$ such that $r^+(x,\xi)=r^-(x,\xi)$. Consider the restrictions of $r^+$ and $r^-$ onto the thermostat geodesic $\gamma_{x,\xi}:[-l^-,\,l^+]\to U$, $l^-,\,l^+>0$, with $(\gamma_{x,\xi}(0),\dot{\gamma}_{x,\xi}(0))=(x,\xi)$ and $\gamma_{x,\xi}(-l^-),\,\gamma_{x,\xi}(l^+)\in\p U$. Notice that the zeroth order term of the Riccati equation \eqref{Riccati} is a polynomial with respect to $r$. Moreover, $[-l^-,\,l^+]$ is compact, thus the zeroth order term of \eqref{Riccati} is Lipschitz continuous in $r$ when it is restricted on $\gamma_{x,\xi}$. By the Picard-Lindel\"of theorem of first order ODEs, one has the global existence and uniqueness of the solution to the Riccati equation on $\gamma_{x,\xi}$ with $r(0)=r^+(x,\xi)=r^-(x,\xi)$. This implies that $r^+\equiv r^-$ along $\gamma_{x,\xi}$. In particular, there is $z\in \p_+SU$ such that $r^+(z)=r^-(z)$. However, since $r^+$ and $r^-$ are never equal on $\p_+SU$, we reach a contradiction. Therefore, $r^+$ and $r^-$ are two distinct solutions to the Riccati equation \eqref{Riccati} on $SM$.
\end{proof}

The following is an analogue of Theorem \ref{existence of alpha-control} on compact surfaces with boundary.
 
\begin{Theorem}\label{alpha controll boundary}
Let $(M,g,E)$ be a simple Gaussian thermostat on a compact oriented surface with boundary. Then there exists an $\alpha>0$ such that 
$$
\|\mathbf G_E\varphi\|^2-(\K\varphi,\varphi)\ge\alpha\left(\|\mathbf G_E\varphi\|^2+\|\varphi\|^2\right)
$$
for all $\varphi\in C^\infty(SM,\mathbb{C})$ with $\varphi|_{\p SM}=0$.
\end{Theorem}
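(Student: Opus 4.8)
The plan is to mirror the proof of Theorem~\ref{existence of alpha-control} line by line, making only two adjustments for the boundary setting. First, the continuous solutions $r^\pm$ of the Riccati equation, which in the closed case came from the Anosov hypothesis, are now supplied by Lemma~\ref{distinct solutions}; note that the equation appearing there, $\bfG_E r+r^2-V(\lambda)r+\K-\bfG_E V(\lambda)=0$, is literally the same as $\bfG_E(r-V(\lambda))+r(r-V(\lambda))+\K=0$ used in Theorem~\ref{existence of alpha-control}, so $r^+$ and $r^-$ are exactly the objects required. Second, every integration by parts must now be carried out on a manifold with boundary, and the hypothesis $\varphi|_{\p SM}=0$ is precisely what makes the boundary contributions disappear.

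The technical core is the pointwise identity, valid for any solution $r$ of the Riccati equation and any $\varphi\in C^\infty(SM,\C)$,
$$
|\bfG_E\varphi-r\varphi+V(\lambda)\varphi|^2=|\bfG_E\varphi|^2-\K|\varphi|^2+|V(\lambda)\varphi|^2-\bfG_E\big((r-V(\lambda))|\varphi|^2\big)-rV(\lambda)|\varphi|^2,
$$
which holds verbatim as in the closed case. Integrating over $SM$, the only term needing care is $\int_{SM}\bfG_E((r-V(\lambda))|\varphi|^2)\,d\Sigma^3$. Here I would invoke the boundary version of Stokes' theorem, namely
$$
\int_{SM}\mathfrak X(f)\,d\Sigma^3+\int_{SM}f\,L_{\mathfrak X}\,d\Sigma^3=\int_{\p SM}\iota_{\mathfrak X}(f\,d\Sigma^3),
$$
applied with $\mathfrak X=\bfG_E$ and $f=(r-V(\lambda))|\varphi|^2$. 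By Lemma~\ref{lie} we have $L_{\bfG_E}d\Sigma^3=V(\lambda)\,d\Sigma^3$, and since $\varphi|_{\p SM}=0$ forces $f|_{\p SM}=0$, the boundary integral vanishes. This delivers the integral identity
$$
\|\bfG_E\varphi\|^2-(\K\varphi,\varphi)=\|\bfG_E\varphi-r\varphi+V(\lambda)\varphi\|^2,\qquad r=r^\pm,
$$
for all such $\varphi$.

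From this point the argument is purely algebraic and follows the closed case. Setting $A:=\bfG_E\varphi-r^+\varphi+V(\lambda)\varphi$ and $B:=\bfG_E\varphi-r^-\varphi+V(\lambda)\varphi$, the identity gives $\|A\|=\|B\|$ (both equal $\|\bfG_E\varphi\|^2-(\K\varphi,\varphi)$ after squaring), and the same manipulation as before yields $\varphi=(r^+-r^-)^{-1}(A-B)$ together with $\bfG_E\varphi=(1-c)A+cB$, where $c=\frac{r^+-V(\lambda)}{r^+-r^-}$. The quantitative input is now compactness rather than hyperbolicity: since $SM$ is compact and $r^+,r^-$ are smooth and nowhere equal by Lemma~\ref{distinct solutions}, the difference $|r^+-r^-|$ is bounded below by a positive constant while $r^\pm$ and $V(\lambda)$ are bounded above. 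This produces an $\alpha>0$ with $2\alpha\|\varphi\|^2\le\|A\|^2$ and $2\alpha\|\bfG_E\varphi\|^2\le\|A\|^2$; adding these and using $\|A\|^2=\|\bfG_E\varphi\|^2-(\K\varphi,\varphi)$ gives the claimed estimate.

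I expect the only genuinely new point relative to Theorem~\ref{existence of alpha-control} to be the justification of the boundary integration by parts, which is exactly where the assumption $\varphi|_{\p SM}=0$ is consumed. The deeper difficulty, that of producing two distinct \emph{smooth} solutions to the Riccati equation on all of $SM$, has already been resolved in Lemma~\ref{distinct solutions}; this is precisely why simplicity (strict thermostat convexity together with the absence of conjugate points) is the correct hypothesis to replace the Anosov condition, and with that lemma available the remainder of the proof transfers mechanically.
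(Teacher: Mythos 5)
Your proposal is correct and follows essentially the same route as the paper, which simply states that, with the smooth nowhere-equal Riccati solutions $r^\pm$ supplied by Lemma~\ref{distinct solutions}, the argument is ``almost identical'' to that of Theorem~\ref{existence of alpha-control}. You have correctly identified the two points that need adjustment --- the vanishing of the boundary term in Stokes' theorem thanks to $\varphi|_{\p SM}=0$, and the use of compactness of $SM$ together with smoothness and nowhere-equality of $r^\pm$ to bound $|r^+-r^-|$ away from zero --- and the rest transfers verbatim.
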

\begin{proof}
Applying Lemma \ref{distinct solutions}, the proof is almost identical to the proof of Theorem \ref{existence of alpha-control}.
\end{proof}

Applying Theorem \ref{pestov boundary} and \ref{alpha controll boundary}, the results of Section 4 and 5 also hold for the boundary case. 
To prove Theorem \ref{th2}, we need the following lemma on the injectivity of $\mu_{\pm}$ which is an analogue of Proposition \ref{injectivity of mu operators}.

\begin{Lemma}
Let $(M,g,E)$ be a Gaussian thermostat on a compact oriented Riemannian surface with boundary. Consider the operators $\mu_\pm:\Omega_k\to \Omega_{k\pm1}$ defined as in \eqref{def-mu}. Let $k\geq 1$, if $\mu_+u=0$ where $u\in\Omega_{k},\,u|_{\p SM}=0$, then $u=0$; if $\mu_-u=0$ where $u\in\Omega_{-k},\,u|_{\p SM}=0$, then $u=0$.
\end{Lemma}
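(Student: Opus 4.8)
The plan is to mimic the proof of the closed-surface case (the final Lemma of Section 6), using the Pestov identity for the boundary case, Theorem~\ref{pestov boundary}, in place of the closed Pestov identity, and throwing in the hypothesis $u|_{\p SM}=0$ so that the identity is applicable. The key algebraic input is the same: since $\bfG_E=\mu_++\mu_-$, for $u\in\Omega_k$ with $u|_{\p SM}=0$ one computes
\begin{align*}
\bfG_E u&=\mu_+u+\mu_-u,\\
\bfG_E Vu&=ik\mu_+u+ik\mu_-u,\\
V\bfG_E u&=i(k+1)\mu_+u+i(k-1)\mu_-u,
\end{align*}
where we have used that $\mu_\pm$ shifts the Fourier degree by $\pm1$ and that $V$ acts as multiplication by $i$ times the degree on each $\Omega_j$.

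Next I would substitute these three expressions into the boundary Pestov identity of Theorem~\ref{pestov boundary}. The norms separate by Fourier degree because $\mu_+u\in\Omega_{k+1}$ and $\mu_-u\in\Omega_{k-1}$ are orthogonal, and the cross terms drop out exactly as in the closed case. After collecting terms one obtains the integral identity
$$
2k\|\mu_-u\|^2=2k\|\mu_+u\|^2+k^2(\mathbb{K}u,u),
$$
the same identity as in the closed-surface lemma. The one nontrivial point is that $\mathbb{K}$ is no longer assumed to have a sign, so the term $(\mathbb{K}u,u)$ cannot be discarded directly; this is where the simplicity hypothesis enters.

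The main obstacle, then, is controlling the sign of the curvature term without a pointwise assumption $\mathbb{K}<0$. In the closed case one simply used $\mathbb{K}<0$ (after a conformal reduction via Lemmas~\ref{invariant} and~\ref{negative curvature}), but on a surface with boundary the cleaner route is to invoke the $\alpha$-controlled estimate for the boundary case, Theorem~\ref{alpha controll boundary}, which gives a genuine positive constant $\alpha>0$ with $\|\bfG_E\varphi\|^2-(\mathbb{K}\varphi,\varphi)\ge\alpha(\|\bfG_E\varphi\|^2+\|\varphi\|^2)$ for all $\varphi$ vanishing on $\p SM$. Applying this to $\varphi=u$ for $k\ge1$, if $\mu_+u=0$ then $\bfG_E u=\mu_-u$, and the resulting estimate forces $\|\mu_-u\|=0$ together with $\|u\|=0$, whence $u\equiv0$; I would also need to verify directly, as in the closed case, that the combination of the Pestov identity and the control estimate yields $\|\mu_-u\|^2\le0$, so that $\mu_-u=0$ and then $\bfG_E u=0$ pin down $u$. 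The symmetric argument with $\mu_-$ and $k\le-1$ handles the second assertion. The only real care needed is bookkeeping of signs of $k$ and making sure the boundary vanishing is used consistently so that every application of the Pestov identity and of Theorem~\ref{alpha controll boundary} is legitimate.
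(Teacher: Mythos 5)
Your derivation of the integral identity $2k\|\mu_-u\|^2=2k\|\mu_+u\|^2+k^2(\mathbb{K}u,u)$ from the boundary Pestov identity (Theorem~\ref{pestov boundary}) is correct and is the right computational core. The gap is in how you dispose of the curvature term. You replace the paper's conformal reduction by the $\alpha$-controlled estimate of Theorem~\ref{alpha controll boundary}, and this fails for two reasons. First, that theorem assumes the thermostat is \emph{simple}, whereas the Lemma is stated, and is later invoked via Proposition~\ref{injectivity for K<0}, for an arbitrary Gaussian thermostat on a compact oriented surface with boundary; you are importing a hypothesis that is not there. Second, and more seriously, even granting simplicity the two estimates do not close for large $k$: with $\mu_+u=0$ the Pestov identity gives $(\mathbb{K}u,u)=\tfrac{2}{k}\|\mu_-u\|^2$, and inserting this into the control estimate yields
$$
\Bigl(1-\frac{2}{k}-\alpha\Bigr)\|\mu_-u\|^2\ \ge\ \alpha\|u\|^2,
$$
which forces $u=0$ only when $1-2/k-\alpha\le 0$, i.e.\ $k\le 2/(1-\alpha)$. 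Theorem~\ref{alpha controll boundary} provides only \emph{some} $\alpha>0$, so for $k\ge 3$ (and in any case for all $k$ beyond $2/(1-\alpha)$) nothing follows. Since the Lemma must be applied to Fourier modes of arbitrarily high degree $k\ge m$ in the proof of Proposition~\ref{injectivity for K<0}, this is fatal.

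The paper's route avoids both problems and is the step you should restore: embed $M$ into a closed oriented surface of genus $\ge 2$, extend $g$ and $E$, and use Lemma~\ref{negative curvature} to produce $\sigma$ with $\tilde{\mathbb{K}}=e^{-2\sigma}(K-\Delta_g\sigma-\div_gE)<0$; by Lemma~\ref{invariant} the kernel of $\mu_+$ on $\Omega_k$ transforms as $u\mapsto e^{k\sigma}u$, which preserves vanishing on $\p SM$, so one may assume $\mathbb{K}<0$ pointwise from the start. Then your identity gives $0\le 2k\|\mu_-u\|^2=k^2(\mathbb{K}u,u)\le 0$ for every $k\ge 1$, hence $\mu_-u=0$ and $(\mathbb{K}u,u)=0$, hence $u\equiv 0$, with no simplicity or control hypothesis whatsoever.
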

\begin{proof}
Notice that $M$ can be embedded into a closed surface of genus $\geq 2$. By Lemma \ref{negative curvature}, we only need to show the injectivity of $\mu_{\pm}$ for Gaussian thermostats of negative thermostat curvature, which is straightforward by applying Theorem \ref{pestov boundary}.
\end{proof}

With the help of above Lemma, we obtain the following injectivity result whose proof is similar to that for Theorem \ref{alpha controlled injectivity}.

\begin{Proposition}\label{injectivity for K<0}
Let $(M,g,E)$ be a Gaussian thermostat on a compact oriented surface with boundary which is $(m-1)/(m+1)$-controlled. Let $\varphi$ be a symmetric $m$-tensor and suppose that there is a smooth solution $h$, $h|_{\p SM}=0$, to the transport equation
$$
\bfG_{E} h=\varphi.
$$
Then $h$ is of degree $m-1$.
\end{Proposition}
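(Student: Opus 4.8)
The statement to prove, Proposition~\ref{injectivity for K<0}, is the boundary analogue of Theorem~\ref{alpha controlled injectivity}, and the author explicitly tells us the proof is similar. So my plan is to follow the structure of the closed-surface argument verbatim, inserting the boundary condition $h|_{\p SM}=0$ wherever an integration by parts or an application of the Pestov identity requires it.

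\textbf{The plan.} First I would set $u=\sum_{|k|\ge m}h_k$. Since $h$ vanishes on $\p SM$, so does $u$ (the Fourier projection in the fiber variable preserves the boundary condition, as the boundary $\p SM$ is a union of fibers over $\p M$, which is $V$-invariant). Because $\bfG_E h=\varphi$ is a symmetric $m$-tensor, $\bfG_E u$ has degree $m$, hence $Q_m u=0$. Now the key point: the results of Sections 4 and 5 hold in the boundary case once $\alpha$-control (Theorem~\ref{alpha controll boundary}) and the boundary Pestov identity (Theorem~\ref{pestov boundary}) are available, as the author has just asserted. In particular Proposition~\ref{prop1} applies to $u$, and with the hypothesis $\alpha=(m-1)/(m+1)$ the two leading coefficients $1-(m-1)^2+\alpha m^2$ and $1-m^2+\alpha(m+1)^2$ vanish, forcing $\mu_- u_m=0$ and $\mu_+u_{-m}=0$.

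\textbf{Main computation.} With those two vanishings in hand, I would compute $\bfG_E u$, $\bfG_E Vu$, and the commutator $[\bfG_E,V]u=X_\perp u-V(\lambda)Vu$ exactly as in the proof of Theorem~\ref{alpha controlled injectivity}. Comparing the two expressions for $X_\perp u-V(\lambda)Vu$ — one from the commutator applied to $u=u_m+u_{m+1}+\dots$, and the known identity $X_\perp u-V(\lambda)Vu=i\mu_- u-i\mu_+u$ — I conclude $\mu_- u=\mu_- u_{m+1}$ and $\mu_+ u=\mu_+ u_{-(m+1)}$. This says $\mu_+ u_k=0$ for $k\ge m$ and $\mu_- u_{-k}=0$ for $k\ge m$.

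\textbf{Conclusion and the obstacle.} At this stage the closed-surface proof invokes Proposition~\ref{injectivity of mu operators} to kill all the $u_k$; here I would instead invoke the preceding boundary Lemma on injectivity of $\mu_\pm$, which requires precisely the boundary condition $u_k|_{\p SM}=0$. Since each Fourier mode $u_k$ inherits the vanishing boundary trace, that Lemma yields $u_k=0$ for all $|k|\ge m$, so $u\equiv 0$ and $h$ has degree $m-1$. The main obstacle, relative to the closed case, is purely bookkeeping of boundary conditions: one must verify at every step (the Fourier decomposition, Proposition~\ref{prop1}, and the final injectivity Lemma) that the trace-vanishing hypothesis propagates correctly, since all of these tools were restated for the boundary setting only under the assumption that the relevant functions vanish on $\p SM$. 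No genuinely new analytic difficulty arises once Theorems~\ref{pestov boundary} and \ref{alpha controll boundary} are in place.
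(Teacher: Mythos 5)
Your proposal is correct and follows essentially the same route as the paper, which itself only says the proof is ``similar to that for Theorem~\ref{alpha controlled injectivity}'' once the boundary Pestov identity, the boundary $\alpha$-control estimate, and the boundary injectivity of $\mu_\pm$ are in place; your observation that the trace condition passes to each Fourier mode because $\p SM$ is $V$-invariant is exactly the needed bookkeeping. One small correction: with $\alpha=(m-1)/(m+1)$ only the coefficient $1-m^2+\alpha(m+1)^2$ vanishes, while $1-(m-1)^2+\alpha m^2=2m/(m+1)>0$, and it is this strict positivity (not the vanishing of both coefficients) that forces $\mu_-u_m=0$ and $\mu_+u_{-m}=0$ from $Q_mu=0$.
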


To prove Theorem \ref{th2}, we need a version of Livsic Theorem for surfaces with boundary. Given a 2D simple Gaussian thermostat $(M,g,E)$, 
let $\tau(x,v),\, (x,v)\in SM$ be the time that the thermostat geodesic $\gamma_{x,v}$ starting at $x$ in direction $v$ exits $M$. The simplicity assumption implies that $\tau$ is finite for all $(x,v)\in SM$ and it is smooth on $SM$ except $S(\p M)$, the unit sphere bundle of the boundary $\p M$.

Given $f$ a smooth function on $SM$, it is easy to see that 
\begin{equation}\label{solution of transport equ}
u^f(x,v)=-\int_0^{\tau(x,v)}f(\gamma_{x,v}(t),\dot{\gamma}_{x,v}(t))\,dt
\end{equation}
solves the transport equation
$$\bfG_E u=f$$
in $SM$. Moreover, if $I f\equiv 0$, we obtain $u^f|_{\p SM}=0$. The ingredient is the following regularity statement.

\begin{Proposition}\label{regularity}
Let $(M,g,E)$ be a simple Gaussian thermostat on a compact oriented surface with boundary. Given $f\in C^{\infty}(SM)$ with $I f\equiv 0$, let $u^f$ be the function defined by \eqref{solution of transport equ}, then $u^f\in C^{\infty}(SM)$ too.
\end{Proposition}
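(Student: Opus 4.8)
The statement asserts interior and boundary smoothness of the function $u^f$ defined by integrating $f$ along thermostat geodesics up to the exit time $\tau$, under the assumption $If\equiv 0$. The plan is to exploit the flow-invariant formulation of $u^f$ rather than differentiating the integral formula directly. The key observation is that $u^f$ satisfies the transport equation $\bfG_E u^f = f$ in the interior, and that $If\equiv 0$ forces $u^f|_{\p SM}=0$, so the difficulty is purely a regularity question near $S(\p M)$, where $\tau$ itself fails to be smooth.

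First I would establish smoothness of $u^f$ in the interior of $SM$. Away from $S(\p M)$ the exit time $\tau$ is smooth by simplicity (as already noted in the excerpt), and $f\in C^\infty(SM)$; since $u^f(x,v)=-\int_0^{\tau(x,v)}f(\phi_t(x,v))\,dt$ is a composition of the smooth flow $\phi_t$, the smooth integrand, and the smooth upper limit $\tau$, differentiation under the integral sign shows $u^f\in C^\infty(SM\setminus S(\p M))$. Next I would reduce the boundary regularity to the regularity of $\tau$ itself. Using $If\equiv 0$, one has the alternative representation
\begin{equation*}
u^f(x,v)=\int_{-\tau(x,-v)}^{0}f(\phi_t(x,v))\,dt,
\end{equation*}
obtained by noting that the full integral of $f$ over the complete geodesic through $(x,v)$ vanishes; this representation remains valid and lets one compare the two one-sided integrals to control behaviour as $(x,v)\to S(\p M)$.

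The main obstacle, and the heart of the proof, is the lack of smoothness of $\tau$ at glancing directions, i.e.\ at $S(\p M)$ where $\langle v,\nu(x)\rangle_g=0$. The standard device here is to use strict thermostat convexity of $\p M$: the convexity condition in the excerpt,
\begin{equation*}
\Lambda(x,v)>\langle E(x)-\langle E(x),v\rangle v,\nu(x)\rangle,
\end{equation*}
guarantees that thermostat geodesics are transverse to $\p M$ in a controlled, quadratically nondegenerate way, so that $\tau$ has a square-root type singularity of the form $\tau\sim c\sqrt{\langle v,\nu\rangle}$ near glancing rays, while the combination appearing in $u^f$ cancels the singular part. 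The plan is to follow the Pestov–Uhlmann / Dairbekov–Paternain strategy: introduce boundary normal (semigeodesic) coordinates adapted to the thermostat flow near $\p M$, expand $\tau$ and the integrand to sufficient order, and verify that the singular contributions from the upper limit cancel against those from the integrand precisely because $u^f$ vanishes on $\p SM$ and $\bfG_E u^f=f$ is smooth up to the boundary. Concretely, one shows that $u^f$ extends to a smooth even function of the glancing parameter, so that the square-root singularities do not propagate. I expect this boundary analysis near $S(\p M)$ to be the only genuinely delicate step; the interior smoothness and the reduction are routine, and the convexity hypothesis is exactly what is needed to close the argument.
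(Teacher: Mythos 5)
Your plan correctly isolates the difficulty: interior smoothness of $u^f$ is routine, the identity $u^f(x,v)=\int_{-\tau(x,-v)}^{0}f(\phi_t(x,v))\,dt$ is a legitimate use of $If\equiv 0$, and the only issue is regularity at the glancing set $S(\p M)$. The gap is that the step you yourself call the heart of the proof is only announced, not performed. You assert that the singular contributions from the exit time cancel and that ``$u^f$ extends to a smooth even function of the glancing parameter,'' but you give no mechanism for controlling the full infinite jet of $u^f$ across $S(\p M)$: $C^\infty$ regularity requires every derivative to extend continuously, and a finite-order Taylor expansion of $\tau$ and the integrand cannot deliver that. (Also, the asymptotic $\tau\sim c\sqrt{\langle v,\nu\rangle}$ is not quite right as stated: on $\p_+SM$ near glancing, strict convexity makes $\tau$ comparable to $\langle v,\nu\rangle$ itself, and the square-root degeneration appears only in the normal direction into the interior, so even the setup of your expansion needs more care.)

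For comparison, the paper does not perform this computation either; it invokes the argument of \cite{PSU1}, which takes a different route that avoids any jet analysis at glancing. One embeds $(M,g,E)$ into a slightly larger simple thermostat $(M_1,g,E)$ (as in the proof of Lemma \ref{distinct solutions}), extends $f$ smoothly and forms $u_1(x,v)=-\int_0^{\tau_1(x,v)}f(\phi_t(x,v))\,dt$ using the exit time of $M_1$; since $SM$ lies in the interior of $SM_1$, the function $u_1$ is smooth on all of $SM$. Both $u^f$ and $u_1$ solve $\bfG_E u=f$ on $SM$, so $w:=u^f-u_1$ is a first integral of the flow; its boundary values equal $-u_1|_{\p SM}$ (because $u^f|_{\p SM}=0$), hence are smooth and automatically compatible with the scattering relation. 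Smoothness of such an invariant extension then follows from the Pestov--Uhlmann/Dairbekov regularity lemma, whose proof exploits the fold structure of the endpoint maps rather than explicit expansions at glancing. To complete your proposal you should either carry out this reduction, or supply an actual argument (for instance a Whitney-type extension theorem for functions invariant under the glancing fold) for the cancellation you are asserting; as written, the decisive claim is an expectation rather than a proof.
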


The proof of Proposition \ref{regularity} for simple surfaces can be found in \cite{PSU1}, a similar argument works for simple Gaussian thermostats, thus we leave it to the reader. Now Theorem \ref{th2} follows from Proposition \ref{regularity} and \ref{injectivity for K<0}.

\section*{Acknowledgements}
The authors want to thank their advisor, Professor Gunther Uhlmann, for all his support and encouragement. Thanks are also due to Professor Gabriel Paternain and Professor Mikko Salo for helpful discussions and sharing their knowledges on the topics. The authors are also grateful to the referees for helpful comments and suggestions. The work was partially supported by NSF.


\end{document}